\documentclass[reqno,12pt]{amsart}

\usepackage[all]{xypic}
\usepackage{enumerate}
\usepackage{hyperref}
\usepackage{a4}
\usepackage{amssymb}
\usepackage{tikz}

\DeclareMathOperator{\img}{img}
\DeclareMathOperator{\coker}{coker}

\DeclareMathOperator{\rank}{rank}
\DeclareMathOperator{\codim}{codim}
\DeclareMathOperator{\rk}{rk}
\DeclareMathOperator{\tr}{tr}
\DeclareMathOperator{\ad}{ad}
\DeclareMathOperator{\id}{id}
\DeclareMathOperator{\Emb}{Emb}
\DeclareMathOperator{\Ann}{Ann}
\DeclareMathOperator{\Ham}{Ham}
\DeclareMathOperator{\Symp}{Symp}
\DeclareMathOperator{\Gr}{Gr}
\DeclareMathOperator{\Diff}{Diff}
\DeclareMathOperator{\eend}{end}
\DeclareMathOperator{\Rot}{Rot}

\newcommand\locconst{\textnormal{loc.const.}}
\newcommand\emb{\textnormal{emb}}
\newcommand\aug{\textnormal{aug}}
\newcommand\reg{\textnormal{reg}}
\newcommand\deco{\textnormal{deco}}
\newcommand\ex{\textnormal{ex}}
\newcommand\KKS{\textnormal{KKS}}
\newcommand\lin{\textnormal{lin}}
\newcommand\iso{\textnormal{iso}}

\newcommand\ham{\mathfrak{ham}}

\theoremstyle{plain}
  \newtheorem{theorem}{Theorem}[section]
  \newtheorem{corollary}[theorem]{Corollary}
  \newtheorem{lemma}[theorem]{Lemma}
  \newtheorem{proposition}[theorem]{Proposition}
  
\theoremstyle{definition}
\theoremstyle{remark}
  \newtheorem{remark}[theorem]{Remark}
  \newtheorem{example}[theorem]{Example}

\begin{document}

\title[A dual pair for the group of volume preserving diffeomorphisms]{A dual pair for the group of\\ volume preserving diffeomorphisms}

\author{Stefan Haller}

\address{Stefan Haller,
	Department of Mathematics,
	University of Vienna,
	Oskar-Morgenstern-Platz 1,
	1090 Vienna,
	Austria.}

\email{stefan.haller@univie.ac.at}

\author{Cornelia Vizman}

\address{Cornelia Vizman,
	Department of Mathematics,
	West University of Timi\c soara, 
	Bd. V. Parvan 4,
	300223 Timi\c soara, 
	Romania.}

\email{cornelia.vizman@e-uvt.ro}

\begin{abstract}
	We use cotangent bundles of spaces of smooth embeddings to construct symplectic dual pairs involving the group of volume preserving diffeomorphisms.
	Via symplectic reduction we obtain descriptions of coadjoint orbits of this group in terms of nonlinear Grassmannians of augmented submanifolds.
	For codimension one embeddings these submanifolds are further constrained to the leaves of isodrastic foliations with finite codimensions.
\end{abstract}

\keywords{volume preserving diffeomorphism; dual pair; symplectic reduction; coadjoint orbit; manifold of mappings; nonlinear Grassmannian; isodrastic foliation}

\subjclass[2010]{58D10 (primary); 37K65; 53C12; 53C30; 53D20; 58D05; 58D15}


\maketitle


\section{Introduction}

	Coadjoint orbits play an important role in representation theory.
	Infinite dimensional groups whose coadjoint orbits have been studied rigorously include	(central extensions of) loop groups \cite{PS}, 
	the diffeomorphism group (with Bott-Virasoro extension) \cite{BFP}, the symplectic and Hamiltonian group \cite{W90,HV04,L09,izosimov,HV20,HV23}, 
	the group of volume preserving diffeomorphisms \cite{I96,HV04}, and the contact group \cite{HV22}.
	This paper aims at describing new coadjoint orbits of the group of volume preserving diffeomorphisms.

	The Euler equation being a Hamiltonian system on the cotangent bundle of the group of volume preserving diffeomorphisms,
	such coadjoint orbits can serve as phase spaces for singular vorticities in an ideal fluid \cite{MW83}.
	These add to the already known coadjoint orbits of singular vorticity configurations of codimension one (vortex filaments) 
	and two (vortex sheets) \cite{goldin1,goldin2, khesin, HV04, GBV23, cv}. 
	Such orbits can be described by nonlinear Grassmannians (i.e.~spaces of submanifolds) or their decorated version, 
	in contrast to the orbits presented here, that are described by augmented nonlinear Grassmannians.
	The difference between the two types of nonlinear Grassmannians is that the additional structure to a submanifold $N$ 
	lives on $N$ in the decorated case, while in the augmented case it lives in the ambient space along $N$.

	In this paper  we will apply the general principle that symplectic reduction on one leg of a dual pair of moment maps 
	for two commuting Hamiltonian Lie group actions leads to coadjoint orbits of the other group. 
	The dual pair we use involves the left action of the group of volume preserving diffeomorphisms,
	thus the symplectic reduction for the right group, which is the diffeomorphism group of reparametrizations,
	yields our coadjoint orbits of the group of volume preserving diffeomorphisms modeled by augmented nonlinear Grassmannians.

	The same principle was used in \cite{GBV19,cv}, where symplectic reduction on the right leg of the ideal fluid
	dual pair due to Marsden and Weinstein \cite{MW83} led to coadjoint orbits of the Hamiltonian
	group consisting of weighted isotropic submanifolds of the symplectic manifold \cite{L09,W90}.
	It was also used in \cite{HV22}, where symplectic reduction on the right leg of the dual pair for the contact group 
	provided a conceptual identification of nonlinear Grassmannians of weighted submanifolds with certain coadjoint orbits of the group of contact diffeomorphisms. 

	Let us now sketch the main results of this paper in more details.

	The space of smooth embeddings $\Emb(S,M)$ from a closed manifold $S$ into a manifold $M$ is a Fr\'echet manifold in a natural way.
	The natural actions of the Lie groups $\Diff_c(M)$ and $\Diff(S)$ on the regular cotangent bundle $T^*_\reg\Emb(S,M)$ commute.
	Both actions are Hamiltonian with equivariant moment maps.
	Restricting to the open subset $T^*_{\reg,\times}\Emb(S,M)$, one obtains a dual pair known as the EPDiff dual pair \cite{HM05,GBV12}.

	In this paper we consider a volume form $\mu$ on $M$ and restrict the left action to the group of volume preserving diffeomorphisms, $\Diff_c(M,\mu)$.
	The qualitative behavior depends very much on the codimension of $S$ in $M$.
	If the codimension is at least two, then the action of $\Diff_c(M,\mu)$ on the geometric objects relevant for our purpose turns out to be 
	as transitive as the action of the full group $\Diff_c(M)$.
	On the other hand, the action of $\Diff_c(M,\mu)$ on the space of codimension one submanifolds $N$ in $M$ is no longer locally transitive,
	as the connected components of $M\setminus N$ have invariant volume. 
	We will now elaborate on this dichotomy.

	If $\dim M-\dim S\geq2$, then the $\Diff_c(M,\mu)$ orbits in $T^*_{\reg,\times}\Emb(S,M)$ are open in the $\Diff_c(M)$ orbits.
	Consequently, in this case the EPDiff dual pair remains a dual pair if the left action is restricted to $\Diff_c(M,\mu)$.
	This permits to recognize $T^*_{\reg,\times}\Gr_S(M)$, an open subset in the regular cotangent bundle of the nonlinear Grassmannian, 
	as a coadjoint orbit of $\Diff_c(M,\mu)$ via symplectic reduction at the level zero along the right leg.
	These statements remain true if we further restrict the left action to the subgroup of exact volume preserving diffeomorphisms, $\Diff_{c,\ex}(M,\mu)$.

	If $\dim M-\dim S=1$, then the $\Diff_{c}(M,\mu)$ orbits give rise to a foliation of finite codimension on $\Emb(S,M)$, which will be called the isovolume foliation.
	Using the regular cotangent bundles of isovolume leaves in $\Emb(S,M)$, we obtain a dual pair involving $\Diff_{c}(M,\mu)$.
	Right leg symplectic reduction at certain (non-zero) levels leads to a description of some coadjoint orbits of $\Diff_{c}(M,\mu)$ 
	as spaces of curves in $M$, augmented by classes of 1-form densities.
	The orbits of $\Diff_{c,\ex}(M,\mu)$ provide another foliation on $\Emb(S,M)$, which is analogous to Weinstein's isodrastic foliation for symplectic manifolds.
	Its codimension is also finite, but larger than the codimension of the isovolume foliation, in general.
	We obtain an analogous dual pair involving $\Diff_{c,\ex}(M,\mu)$ and similar descriptions of certain coadjoint orbits of $\Diff_{c,\ex}(M,\mu)$.

	The remaining part of this paper is organized as follows.
	In Section~\ref{S:EPDiff} we recall the EPDiff dual pair and identify some coadjoint orbits of $\Diff_c(M)$ via symplectic reduction.
	In Section~\ref{S:EPDiffvol} we introduce the EPDiffvol dual pair for embeddings of codimension at least two and identify some coadjoint orbits of $\Diff_c(M,\mu)$ and $\Diff_{c,\ex}(M,\mu)$.
	In Section~\ref{S:foliations} we discuss two foliations on codimension one embeddings which will be used in the construction of the dual pairs in Section~\ref{S:codim.one}.
	In Section~\ref{S:orbits.codim.one} we use the latter dual pairs to identify further coadjoint orbits of $\Diff_c(M,\mu)$ and $\Diff_{c,\ex}(M,\mu)$,
	and give an application to singular vortex configurations in the plane.

\section{The EPDiff dual pair}\label{S:EPDiff}

	We consider a smooth manifold $M$ and a closed manifold $S$.
	Holm and Marsden \cite{HM05} observed that the commuting actions of $\Diff_c(M)$ and $\Diff(S)$ on the cotangent bundle of the manifold of embeddings $\Emb(S,M)$
	constitute a symplectic dual pair \cite{W83} with mutually symplectic orthogonal \cite{LM87} orbits.
	A detailed proof of the symplectic orthogonality has been given in \cite{GBV12}.

	In this section we will discuss right leg reduction in the EPDiff dual pair, i.e., reduction for the $\Diff(S)$ action.
	Reducing at the level zero we will recognize certain open subsets in $T^*_\reg\Gr_S(M)$, the regular cotangent bundle of the nonlinear Grassmannian, as coadjoint orbits of $\Diff_c(M)$. 
	We will also reduce at specific nonzero levels, provided $S$ is 1-dimensional.
	This leads to a description of some coadjoint orbits of $\Diff_c(M)$ as manifolds of augmented curves in $M$.

\subsection{EPDiff dual pair}\label{SS:EPDiff}

	We begin by recalling the EPDiff dual pair in more details.
	The space of smooth embeddings $\Emb(S,M)$ is well known to be an open subset in the Fr\'echet manifold $C^\infty(S,M)$.
	We define the regular part of its cotangent bundle by
	\begin{equation}\label{treg}
		T^*_\reg\Emb(S,M):=C^\infty_{\lin,\emb}(|\Lambda|_S^*,T^*M)
	\end{equation}
	where the right hand side denotes the space of all vector bundle homomorphisms $\Phi\colon|\Lambda|^*_S\to T^*M$
	sitting over smooth embeddings $\varphi\colon S\to M$, with $|\Lambda|_S$ denoting the line bundle of densities over $S$.
	It will often be convenient to identify this with a space of pairs $\Phi=(\varphi,\alpha)$ as follows:
	\[
		T^*_\reg\Emb(S,M)
		=\bigl\{(\varphi,\alpha):\varphi\in\Emb(S,M),\alpha\in\Gamma(|\Lambda|_S\otimes\varphi^*T^*M)\bigr\}.
	\]
	This space carries a natural structure of a Fr\'echet manifold such that the canonical map $T^*_\reg\Emb(S,M)\to\Emb(S,M)$ is a smooth vector bundle.
	The natural actions of the groups $\Diff(M)$ and $\Diff(S)$ on the regular cotangent bundle preserve the tautological 1-form and the corresponding weak symplectic form.	
	
	The total space of the EPDiff dual pair is the open subset of (fiberwise linear) embeddings $\Phi\colon|\Lambda|_S^*\to T^*M$ in $T^*_\reg\Emb(S,M)$, and will be denoted by
	\begin{equation}\label{E:varE}
		\mathcal E:=T^*_{\reg,\times}\Emb(S,M):=\Emb_\lin(|\Lambda|_S^*,T^*M).
	\end{equation}
	The corresponding description in terms of pairs reads
	\begin{equation}\label{treg2}
		\mathcal E=\bigl\{(\varphi,\alpha):\varphi\in\Emb(S,M),\alpha\in\Gamma(|\Lambda|_S\otimes\varphi^*T^*M)\text{ nowhere zero}\bigr\}.
	\end{equation}
	Clearly, $\mathcal E$ is invariant under the actions of $\Diff(M)$ and $\Diff(S)$.
	The restriction of the tautological 1-form to $\mathcal E$ will be denoted by $\theta^{\mathcal E}$.
	Hence,
	\[
		\theta^{\mathcal E}_{(\varphi,\alpha)}(\bar\xi)=\int_S\alpha(\xi)
	\]
	if $\bar\xi\in T_{(\varphi,\alpha)}\mathcal E$ is a tangent vector over $\xi\in T_\varphi\Emb(S,M)$.
	Because $\mathcal E$ is open, the restriction of the symplectic form, $\omega^{\mathcal E}=d\theta^{\mathcal E}$, is a weak symplectic form on $\mathcal E$.
	By invariance, the tautological 1-form provides equivariant moment maps for the actions of $\Diff_c(M)$ and $\Diff(S)$:
	\begin{equation}\label{hm}
		\mathfrak X_c(M)^*\xleftarrow{J_L^{\mathcal E}}\mathcal E\xrightarrow{J_R^{\mathcal E}}\Omega^1(S;|\Lambda|_S)\subseteq\mathfrak X(S)^*.
	\end{equation}
	By definition, $\langle J^{\mathcal E}_L,X\rangle=\theta^{\mathcal E}(\zeta^{\mathcal E}_X)$ for $X\in\mathfrak X_c(M)$, 
	and $\langle J^{\mathcal E}_R,Y\rangle=\theta^{\mathcal E}(\zeta^{\mathcal E}_Y)$ for $Y\in\mathfrak X(S)$, 
	where $\zeta^{\mathcal E}_X$ and $\zeta^{\mathcal E}_Y$ denote the infinitesimal actions of $\Diff_c(M)$ and $\Diff(S)$ on $\mathcal E$, respectively.
	More explicitly, these moment maps are given by
	\[
		\langle J^{\mathcal E}_L(\varphi,\alpha),X\rangle=\int_S\alpha(X\circ\varphi)
		\qquad\text{and}\qquad
		J^{\mathcal E}_R(\varphi,\alpha)=\varphi^*\alpha,
	\]
	for $(\varphi,\alpha)\in\mathcal E$ and $X\in\mathfrak X_c(M)$.	

	\begin{theorem}[EPDiff dual pair {\cite{HM05,GBV12}}]\label{epdiff}
		The equivariant moment maps in \eqref{hm} constitute a symplectic dual pair with mutually symplectic orthogonal orbits.
	\end{theorem}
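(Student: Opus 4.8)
The plan is to verify the three defining properties of a symplectic dual pair in turn: (i) the two moment maps $J^{\mathcal E}_L$ and $J^{\mathcal E}_R$ are Poisson, equivariant, and have commuting Hamiltonian flows; (ii) at each point $(\varphi,\alpha)\in\mathcal E$ the tangent spaces to the two group orbits are mutually symplectic orthogonal; and (iii) — the strongest form — each orbit is \emph{exactly} the symplectic orthogonal of the other, i.e.\ $\ker TJ^{\mathcal E}_L=(T\mathcal O_R)^{\omega^{\mathcal E}}$ and $\ker TJ^{\mathcal E}_R=(T\mathcal O_L)^{\omega^{\mathcal E}}$, where $\mathcal O_L,\mathcal O_R$ are the $\Diff_c(M)$- and $\Diff(S)$-orbits. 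Since the result is attributed to \cite{HM05,GBV12}, I would present this as a streamlined re-derivation rather than claim novelty.

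First I would record that the commutativity of the two actions is immediate: $\Diff_c(M)$ acts by post-composition on $\varphi$ and the induced action on $\alpha$, while $\Diff(S)$ acts by pre-composition with its inverse together with the density twist, and these manifestly commute. Because both actions preserve $\theta^{\mathcal E}$, and the moment maps are defined through $\theta^{\mathcal E}$ via Cartan's formula $\langle J,X\rangle=\theta^{\mathcal E}(\zeta^{\mathcal E}_X)$, equivariance and the Poisson (moment map) property follow from the standard argument: $i_{\zeta_X}\omega^{\mathcal E}=i_{\zeta_X}d\theta^{\mathcal E}=\mathcal L_{\zeta_X}\theta^{\mathcal E}-d\,i_{\zeta_X}\theta^{\mathcal E}=-d\langle J,X\rangle$ since $\mathcal L_{\zeta_X}\theta^{\mathcal E}=0$; here one must check that $\zeta^{\mathcal E}_X$ and $\zeta^{\mathcal E}_Y$ are genuine vector fields on the Fréchet manifold $\mathcal E$ and that the pairings are smooth, but this is routine manifold-of-mappings bookkeeping. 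Commutativity of the Hamiltonian flows then follows from commutativity of the actions, giving $\{J^{\mathcal E}_L(X),J^{\mathcal E}_R(Y)\}=0$.

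The substantive part is (ii) and (iii). For symplectic orthogonality I would compute, for $X\in\mathfrak X_c(M)$ and $Y\in\mathfrak X(S)$,
\[
	\omega^{\mathcal E}_{(\varphi,\alpha)}\bigl(\zeta^{\mathcal E}_X,\zeta^{\mathcal E}_Y\bigr)
	=d\langle J^{\mathcal E}_R,Y\rangle(\zeta^{\mathcal E}_X)
	=\tfrac{d}{dt}\Big|_0\bigl\langle\varphi_t^*\alpha_t,Y\bigr\rangle,
\]
where $(\varphi_t,\alpha_t)$ is the flow of $\zeta^{\mathcal E}_X$; since that flow acts only on the target $M$, it leaves the pullback $\varphi^*\alpha$ and hence $\langle J^{\mathcal E}_R,Y\rangle$ invariant, so the derivative vanishes. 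This shows $T\mathcal O_L\subseteq\ker TJ^{\mathcal E}_R=(T\mathcal O_L')^{\omega}$, etc. For the sharp statement one must show there is no extra kernel: a tangent vector $\bar\xi=(\xi,\dot\alpha)$ lies in $\ker TJ^{\mathcal E}_R$ iff $\mathcal L_{\text{(reparametrization part of }\xi)}$-type terms cancel, and one argues that such a $\bar\xi$ is necessarily an infinitesimal target diffeomorphism — this is exactly where the openness condition defining $\mathcal E$, namely that $\alpha$ is nowhere zero as a bundle map $|\Lambda|^*_S\to T^*M$, is used: nowhere-vanishing of $\alpha$ lets one solve pointwise for a vector field $X$ on $M$ along $\varphi(S)$ with prescribed $\alpha(X\circ\varphi)$, and the embedding property of $\varphi$ together with a tubular-neighborhood extension produces a global $X\in\mathfrak X_c(M)$ realizing $\bar\xi$ modulo $T\mathcal O_R$. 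The symmetric statement, that $\ker TJ^{\mathcal E}_L$ is spanned by reparametrization directions, uses the same pointwise surjectivity in the $S$-variable.

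The main obstacle I anticipate is precisely this last pointwise-to-global passage in (iii): one needs that the condition ``$\alpha$ nowhere zero'' is strong enough that the linear-algebraic splitting at each point $s\in S$ (decomposing $T^*_{\varphi(s)}M$ into the line $\operatorname{img}\alpha_s$ and a complement) can be carried out smoothly in $s$ and then extended off $\varphi(S)$ with compact support, and that the resulting vector field indeed lands in $\mathfrak X_c(M)$ rather than merely a formal series; this is the content of the careful proof in \cite{GBV12}, and I would either reproduce that argument or cite it directly. Everything else — smoothness of the moment maps, invariance of $\theta^{\mathcal E}$, equivariance — is standard and I would dispatch it quickly, referring to the general theory of cotangent bundles of mapping spaces.
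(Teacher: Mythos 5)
Your route is essentially the paper's: Theorem~\ref{epdiff} is not proved in the paper at all, but imported from \cite{HM05} with the detailed symplectic-orthogonality argument credited to \cite{GBV12}, and your plan (standard moment-map formalities coming from invariance of $\theta^{\mathcal E}$, then deferring the sharp transitivity/extension step to \cite{GBV12}) matches exactly that. Two small corrections for the record: the strong condition should be stated as $\ker TJ^{\mathcal E}_L=T\mathcal O_R$ and $\ker TJ^{\mathcal E}_R=T\mathcal O_L$ (your displayed identities, read literally, assert $\ker TJ^{\mathcal E}_L=\ker TJ^{\mathcal E}_R$ since $\ker TJ^{\mathcal E}_L=(T\mathcal O_L)^{\omega^{\mathcal E}}$ automatically), and realizing $\bar\xi\in\ker TJ^{\mathcal E}_R$ only \emph{modulo} $T\mathcal O_R$ is not quite enough --- the residual reparametrization directions contained in $\ker TJ^{\mathcal E}_R$ (e.g.\ isotropy directions of $\varphi^*\alpha$, which can be all of $\mathfrak X(S)$ when $\varphi^*\alpha=0$) must themselves be realized by ambient vector fields, which is part of what the cited extension argument establishes.
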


	Recall here that a symplectic dual pair \cite{W83} for commuting Hamiltonian actions of Lie groups $G$ and $H$ on a symplectic manifold $Q$ consists of two equivariant moment maps
	\[
		\mathfrak g^*\xleftarrow{J_L}Q\xrightarrow{J_R}\mathfrak h^*
	\]
	such that the distributions $\ker(TJ_L)$ and $\ker(TJ_R)$ are symplectic orthogonal complements of one another.
	In finite dimensions the latter orthogonality assumption holds if and only if the $G$-orbits and the $H$-orbits are mutually symplectic orthogonal, i.e., iff 
	\begin{equation}\label{E:mso}
		\mathfrak g_Q^\perp=\mathfrak h_Q\qquad\text{and}\qquad\mathfrak h_Q^\perp=\mathfrak g_Q
	\end{equation}
	where $\mathfrak g_Q$ and $\mathfrak h_Q$ denote the distributions on $Q$ provided by the tangent spaces to the orbits of $G$ and $H$, respectively.
	In infinite dimensions the latter condition can be strictly stronger \cite{GBV15}.
	As $\ker(TJ_L)=\mathfrak g_Q^\perp$ and $\ker(TJ_R)=\mathfrak h_Q^\perp$, the two conditions in \eqref{E:mso} assert (formally) that 
	$H$ acts infinitesimally transitive on the level sets of $J_L$ and $G$ acts infinitesimally transitive on the level sets of $J_R$, respectively.

\subsection{Augmented nonlinear Grassmannians}\label{SS:aug}

	The coadjoint orbits we are about to describe are subsets of the augmented nonlinear Grassmannian
	\begin{equation}\label{caug}
		\Gr_S^\aug(M):=\bigl\{(N,\gamma):N\in\Gr_S(M),\gamma\in\Gamma(|\Lambda|_N\otimes T^*M|_N)\bigr\}.
	\end{equation}
	There is a unique structure of a Fr\'echet manifold on $\Gr_S^\aug(M)$ such that the canonical projection 
	\begin{equation}\label{E:pb}
		T_\reg^*\Emb(S,M)\to\Gr_S^\aug(M),{\quad(\varphi,\alpha)\mapsto(\varphi(S),\varphi_*\alpha)}
	\end{equation}
	is a smooth (locally trivial) principal $\Diff(S)$ bundle.
	We denote the nonlinear Grassmannian of submanifolds decorated with 1-form densities by
	\[
		\Gr_S^\deco(M):=\bigl\{(N,\rho_N):N\in\Gr_S(M),\rho_N\in\Omega^1(N;|\Lambda|_N)\bigr\}.
	\]
	Using the canonical identification
	\begin{equation}\label{gemb}
		\Gr_S^\deco(M)=\Emb(S,M)\times_{\Diff(S)}\Omega^1(S;|\Lambda|_S),
	\end{equation}
	we equip the decorated Grassmannian with a smooth structure.
	Hence, $\Gr_S^\deco(M)$ is a smooth vector bundle over $\Gr_S(M)$ with typical fiber $\Omega^1(S;|\Lambda|_S)$.
	The canonical map
	\begin{equation}\label{gtog}
		\Gr_S^\aug(M)\to\Gr_S^\deco(M),\quad(N,\gamma)\mapsto(N,\iota_N^*\gamma)
	\end{equation}
	is easily seen to be a locally trivial smooth fiber bundle.
	The fiber over $(N,\rho_N)$ is an affine space over the vector space $\Gamma(|\Lambda|_N\otimes\Ann(TN))$,
	where $\Ann(TN)$ denotes the annihilator of $TN$, i.e., the kernel of the vector bundle homomorphism $T^*M|_N\to T^*N$.
	We summarize these manifolds  and projections in the diagram 
	\begin{equation}\label{E:qwerty}
		T^*_\reg\Emb(S,M)\xrightarrow{\Diff(S)}\Gr_S^\aug(M)\xrightarrow{{\rm affine}}\Gr_S^\deco(M)\xrightarrow{\Omega^1(S;|\Lambda|_S)}\Gr_S(M),
	\end{equation}
	indicating the structure group and the typical fibers over the arrows.

	Clearly, the set 
	\begin{multline}\label{calg}
		\mathcal G:=\Gr_{S,\times}^\aug(M)
		\\
		:=\bigl\{(N,\gamma):N\in\Gr_S(M),\gamma\in\Gamma(|\Lambda|_N\otimes T^*M|_N)\text{ nowhere zero}\bigr\}
	\end{multline}
	is open in $\Gr_S^\aug(M)$.
	As the set $\mathcal E$ in \eqref{E:varE} is the preimage of $\mathcal G$ under the bundle projection in \eqref{E:pb}, 
	the latter restricts to a principal $\Diff(S)$ bundle $\mathcal E\to\mathcal G$.
	The left moment map $J_L^{\mathcal E}$ in \eqref{hm} descends to a $\Diff(M)$ equivariant injective map
	\begin{equation}\label{E:barJL}
		\bar J_L^{\mathcal E}\colon\mathcal G\to\mathfrak X_c(M)^*,\quad\langle\bar J_L^{\mathcal E}(N,\gamma),X\rangle
		=\int_N\gamma(X|_N),
	\end{equation}
	where $(N,\gamma)\in\mathcal G$ and $X\in\mathfrak X_c(M)$.

	For the symplectic reduction we fix $\rho\in\Omega^1(S;|\Lambda|_S)\subseteq\mathfrak X(S)^*$.
	We define the non-linear Grassmannian decorated with 1-form densities of type $\rho$ as 
	\begin{equation}\label{gero2}
		\Gr_{S,\rho}^\deco(M):=\bigl\{(N,\rho_N)\in\Gr_S^\deco(M):(N,\rho_N)\cong(S,\rho)\bigr\}.
	\end{equation}
	We have a canonical identification
	\begin{equation}\label{gero}
		\Gr_{S,\rho}^\deco(M)=\Emb(S,M)\times_{\Diff(S)}\mathcal O_\rho,
	\end{equation}
	where $\mathcal O_\rho$ denotes the coadjoint $\Diff(S)$ orbit of $\rho$.
	The preimage of $\Gr_{S,\rho}^\deco(M)$ under the projection in \eqref{gtog} is
	\[
		\Gr_{S,\rho}^\aug(M):=\bigl\{(N,\gamma)\in\Gr_S^\aug(M):(N,\iota_N^*\gamma)\cong(S,\rho)\bigr\}.
	\]
	The reduced symplectic space
	\begin{equation}\label{E:grh}
		\mathcal G_\rho:=(J_R^{\mathcal E})^{-1}(\mathcal O_\rho)/\Diff(S)=\bigl\{(N,\gamma)\in\mathcal G:(N,\iota_N^*\gamma)\cong(S,\rho)\bigr\}
	\end{equation}
	is an open subset of $\Gr_{S,\rho}^\aug(M)$.
	Theorem~\ref{epdiff} implies, at least formally, that $\Diff_c(M)$ acts locally transitively on $\mathcal G_\rho$ and, consequently,
	the map in \eqref{E:barJL} identifies certain unions of connected components of $\mathcal G_\rho$ with coadjoint orbits of $\Diff_c(M)$.

	In order to obtain a smooth structure on $\mathcal G_\rho$ we will restrict to very specific $\rho$ for which $\mathcal O_\rho$ is a (splitting) smooth submanifold of $\Omega^1(S;|\Lambda|_S)$.
	In this case, every space in the sequence
	\begin{equation}\label{E:rhrh}
		(J^{\mathcal E}_R)^{-1}(\mathcal O_\rho)\xrightarrow{\Diff(S)}
		\mathcal G_\rho\stackrel{\rm open}\subseteq
		\Gr_{S,\rho}^\aug(M)\xrightarrow{{\rm affine}}
		\Gr_{S,\rho}^\deco(M)\xrightarrow{\mathcal O_\rho}\Gr_S(M)
	\end{equation}
	is a (splitting) smooth submanifold in the corresponding term in \eqref{E:qwerty}, and the arrows are locally trivial fiber bundles with structure group and typical fibers as indicated.
	This follows readily from the description of $\Gr_{S,\rho}^\deco(M)$ as an associated bundle provided in \eqref{gero}.

\subsection{Reduction at zero}\label{SS:EPDiff.zero}

	We now discuss symplectic reduction for the right leg of the EPDiff dual pair described in Theorem~\ref{epdiff}, i.e., reduction for the $\Diff(S)$ action.
	Reducing at zero we will recognize open subsets in the regular cotangent bundle of the nonlinear Grassmannian $\Gr_S(M)$ as coadjoint orbits of the diffeomorphism group $\Diff_c(M)$.
	Cotangent bundle reduction at zero is discussed in \cite[Theorem~2.2.2]{MMOPR07} in the finite dimensional setting.
	Here we provide an analogous description for our infinite dimensional situation. 
	
	Let $\mathcal E^\iso=(J_R^{\mathcal E})^{-1}(0)$ denote the zero level of the right moment map.
	It is not hard to see that $\mathcal E^\iso$ consists of all fiberwise linear embeddings $\Phi\colon|\Lambda|^*_S\to T^*M$ with $\Phi^*\theta^{T^*M}=0$. 
	By contraction with the Euler vector field, this condition is shown to be equivalent to $\Phi$ being an isotropic embedding, i.e., $\Phi^*d\theta^{T^*M}=0$, whence the notation $\mathcal E^\iso$.
	We denote the reduced symplectic space by 
	\begin{multline*}
		\mathcal G^\iso
		=\mathcal E^\iso/\Diff(S)
		\\
		=\bigl\{(N,\gamma):N\in\Gr_S(M),\gamma\in\Gamma(|\Lambda|_N\otimes\Ann(TN)),\gamma\text{\ nowhere zero}\bigr\}.
	\end{multline*}
	Since this coincides with $\mathcal G_\rho$ for $\rho=0$ the discussion in the preceding section, cf.~\eqref{E:rhrh}, 
	shows that $\mathcal G^\iso$ is a splitting smooth submanifold in $\mathcal G$, 
	that $\mathcal E^\iso$ is a splitting smooth submanifold of $\mathcal E$, and that the forgetful map $\mathcal E^\iso\to\mathcal G^\iso$ is a smooth principal $\Diff(S)$ bundle.

	Restricting the 1-form $\theta^{\mathcal E}$ to $\mathcal E^\iso$ it becomes vertical and descends to a 1-form $\theta^\iso$ on $\mathcal G^\iso$ 
	such that $\omega^\iso=d\theta^\iso$ is the reduced symplectic form on $\mathcal G^\iso$.
	Via the $\Diff(M)$ equivariant identification
	\begin{equation}\label{E:Giso}
		\mathcal G^\iso=T^*_{\reg,\times}\Gr_S(M),
	\end{equation}
	the 1-form $\theta^\iso$ corresponds to the canonical 1-form on the cotangent bundle of the nonlinear Grassmannian.
	In particular, this is a symplectomorphism intertwining the moment maps of the $\Diff_c(M)$ actions.

	The map $\bar J^{\mathcal E}_L$ in \eqref{E:barJL} restricts to a $\Diff(M)$ equivariant injective map
	\begin{equation}\label{E:barJLiso}
		\bar J^\iso_L\colon\mathcal G^\iso\to\mathfrak X_c(M)^*.
	\end{equation}
	According to Theorem~\ref{epdiff}, the $\Diff_c(M)$ actions on $\mathcal E^\iso$ and $\mathcal G^\iso$ are infinitesimally transitive.
	Proceeding as in the proof of \cite[Proposition~2]{HV04} we see that these actions are locally transitive.
	Hence, \eqref{E:barJLiso} identifies each connected component of $\mathcal G^\iso$ with a coadjoint orbit of $\Diff_c(M)_0$, the connected component of $\Diff_c(M)$ containing the identity.
	Moreover, $(\bar J^\iso_L)^*\omega^\KKS=\omega^\iso$ where $\omega^\KKS$ denotes the Kirillov--Kostant--Souriau symplectic form on the coadjoint orbit.

	Summarizing these observations we obtain the following result.

	\begin{theorem}\label{T:t}
		Each connected component of $T^*_{\reg,\times}\Gr_S(M)$, 
		endowed with its canonical symplectic form and the cotangent lifted $\Diff_c(M)_0$ action, 
		is symplectomorphic to a coadjoint orbit of $\Diff_c(M)_0$ via the equivariant moment map 
		\[
			J\colon T^*_{\reg,\times}\Gr_S(M)\to\mathfrak X_c(M)^*,\quad
			\langle J(N,\gamma),X\rangle=\int_N\gamma(X|_N),
		\]
		where $\gamma\in\Gamma(|\Lambda|_N\otimes\Ann(TN))$ nowhere zero and $X\in\mathfrak X_c(M)$.
		Moreover, $J$ identifies certain unions of connected components of $T^*_{\reg,\times}\Gr_S(M)$ with coadjoint orbits of the full group $\Diff_c(M)$.
	\end{theorem}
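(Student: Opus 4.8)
The plan is to assemble the theorem from the pieces already laid out in Sections~\ref{SS:EPDiff}--\ref{SS:EPDiff.zero}. First I would recall that, by Theorem~\ref{epdiff}, the equivariant moment maps \eqref{hm} form a symplectic dual pair, so that $\ker(TJ_L^{\mathcal E})=(\mathfrak X_c(M)_{\mathcal E})^\perp=\mathfrak X(S)_{\mathcal E}$; in particular the $\Diff_c(M)$ action is infinitesimally transitive on each fiber of $J_R^{\mathcal E}$, hence on $\mathcal E^\iso=(J_R^{\mathcal E})^{-1}(0)$, and on the quotient $\mathcal G^\iso$. Next I would invoke the smooth-structure discussion around \eqref{E:rhrh} with $\rho=0$: $\mathcal E^\iso$ is a splitting submanifold of $\mathcal E$, $\mathcal G^\iso$ a splitting submanifold of $\mathcal G$, and $\mathcal E^\iso\to\mathcal G^\iso$ a principal $\Diff(S)$ bundle, so the Marsden--Weinstein reduced form $\omega^\iso=d\theta^\iso$ is well defined. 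Then I would use the $\Diff(M)$-equivariant identification \eqref{E:Giso}, $\mathcal G^\iso=T^*_{\reg,\times}\Gr_S(M)$, under which $\theta^\iso$ becomes the canonical $1$-form and $\bar J_L^\iso$ of \eqref{E:barJLiso} becomes the map $J$ in the statement, with the asserted formula $\langle J(N,\gamma),X\rangle=\int_N\gamma(X|_N)$ read off directly from \eqref{E:barJL}.

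The substantive step is upgrading infinitesimal transitivity to genuine local transitivity of the $\Diff_c(M)_0$ action on $\mathcal G^\iso$ (equivalently on $\mathcal E^\iso$): one must produce, for each point and each tangent vector in the orbit direction, an actual curve of volume-nonpreserving ambient diffeomorphisms realizing it, with smooth dependence on parameters. Here I would follow the argument of \cite[Proposition~2]{HV04} verbatim in spirit: the infinitesimal action $\mathfrak X_c(M)\to T_{(N,\gamma)}\mathcal G^\iso$ is a split surjection (the splitting coming from a tubular neighbourhood of $N$ together with a choice of connection on $|\Lambda|_N\otimes\Ann(TN)$), so by an implicit-function / Nash--Moser-free local normal form for tame split submersions one integrates the right-inverse to a local section $U\to\Diff_c(M)_0$ of the orbit map. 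Combined with the injectivity of $\bar J_L^\iso$ already recorded, this shows each connected component of $\mathcal G^\iso$ is a single $\Diff_c(M)_0$ orbit, and that $\bar J_L^\iso$ maps it bijectively and equivariantly onto a coadjoint orbit.

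It then remains to identify the symplectic forms: $(\bar J_L^\iso)^*\omega^{\KKS}=\omega^\iso$. I would derive this from the standard fact that for an equivariant moment map $J_L$ the pullback of the KKS form agrees with the original symplectic form along any orbit, applied here on $\mathcal E$ to $\theta^{\mathcal E}$ and $J_L^{\mathcal E}$, and then pushed down through the reduction $\mathcal E^\iso\to\mathcal G^\iso$; since $\theta^\iso$ is the descent of $\theta^{\mathcal E}|_{\mathcal E^\iso}$ and $\bar J_L^\iso$ is the descent of $J_L^{\mathcal E}|_{\mathcal E^\iso}$, the identity $(\bar J_L^\iso)^*\omega^{\KKS}=d\theta^\iso=\omega^\iso$ follows. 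Finally, for the last sentence of the theorem, the full group $\Diff_c(M)$ permutes the connected components of $\mathcal G^\iso$ (it acts by the cotangent-lifted action preserving $\omega^\iso$ and commuting with $J$), so the $\Diff_c(M)$-saturation of a component is a finite or countable union of components, and $J$ carries it onto a single coadjoint orbit of $\Diff_c(M)$; this is immediate from equivariance and injectivity of $J$ once local transitivity under $\Diff_c(M)_0$ is known.

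The main obstacle is the passage from infinitesimal to local transitivity in the Fréchet setting: one must check that the infinitesimal action has a tame (continuous-linear, smooth-in-parameter) right inverse and invoke an appropriate inverse-function theorem, exactly the technical heart of \cite[Proposition~2]{HV04}; everything else is bookkeeping with the structures already set up.
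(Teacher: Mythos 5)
Your proposal follows the paper's own route essentially step for step: reduction at the zero level of $J_R^{\mathcal E}$ as in Section~\ref{SS:EPDiff.zero}, the splitting-submanifold and principal-bundle structure from the discussion around \eqref{E:rhrh} with $\rho=0$, the equivariant identification \eqref{E:Giso} carrying $\theta^\iso$ to the canonical $1$-form, infinitesimal transitivity from Theorem~\ref{epdiff} upgraded to local transitivity exactly as in \cite[Proposition~2]{HV04}, and the injective equivariant map \eqref{E:barJLiso} with $(\bar J^\iso_L)^*\omega^\KKS=\omega^\iso$ giving both the component-wise $\Diff_c(M)_0$ statement and the statement for the full group. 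The only (harmless) slip is that infinitesimal transitivity of $\Diff_c(M)$ on the fibers of $J_R^{\mathcal E}$ comes from the orthogonality $\ker(TJ_R^{\mathcal E})$ being the tangent space to the $\Diff_c(M)$-orbits, i.e.\ the condition $\mathfrak h_Q^\perp=\mathfrak g_Q$, not from the identity you displayed for $\ker(TJ_L^{\mathcal E})$; since the dual pair supplies both equalities, nothing in the argument is affected.
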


\subsection{Reduction at nonzero levels}\label{SS:EPDiff.nonzero}
	
	Assuming $S$ to be a 1-dimensional closed manifold, we will now discuss reduction for the $\Diff(S)$ action of the EPDiff dual pair in Theorem~\ref{epdiff} 
	at a level $\rho\in\Omega^1(S;|\Lambda|_S)\subseteq\mathfrak X(S)^*$ which is nowhere zero.

	If $\rho$ is nowhere zero, then there exists a unique volume form $\nu$ on $S$ such that $\rho=|\nu|\otimes\nu$.
	In particular, such a $\rho$ gives rise to a Riemannian metric $\nu\otimes\nu$ and an orientation on $S$.
	The circumference $\int_S|\nu|$ will be referred to as the length of $\rho$.
	Clearly, $\rho$ can be recovered from the metric and the orientation.
	
	Suppose, for a moment, that $S$ is connected, hence diffeomorphic to the circle $S^1$.
	As $\rho$ provides a Riemannian metric and an orientation on $S$,
	the isotropy group of $\rho$ is isomorphic to the rotation group $\Rot(S^1)\cong SO(2)\cong S^1$.
	Moreover, the $\Diff(S)$ coadjoint orbit $\mathcal O_\rho$ is a level set consisting of all 1-form densities which have the same length as $\rho$.
	In particular, $\mathcal O_\rho$ is a splitting smooth submanifold of $\Omega^1(S;|\Lambda|_S)$.

	Returning to the general case, we may assume $S=\bigsqcup_{i=1}^kS_i$ where each connected component $S_i$ is diffeomorphic to $S^1$.
	Note that we have a splitting short exact sequence of groups
	\[
		1\to\prod_{i=1}^k\Diff(S_i)\to\Diff(S)\to\mathfrak S_k\to1,
	\]
	where $\mathfrak S_k$ denotes the group of permutations of the set $\{1,\dotsc,k\}\cong\pi_0(S)$.
	In particular, $\Diff(S)\cong\Diff(S^1)^k\rtimes\mathfrak S_k$, the semidirect product with respect to the standard action of $\mathfrak S_k$ on $\Diff(S^1)^k$ by permutation of the factors.

	Let $\rho_i\in\Omega^1(S_i;|\Lambda|_{S_i})$ denote the restriction of $\rho$ to the connected component $S_i$, 
	let $l_i$ denote the length of $\rho_i$, and put $\ell_\rho:=(l_1,\dotsc,l_k)\in\mathbb R^k$.
	We obtain a splitting short exact sequence
	\[
		1\to\prod_{i=1}^k\Diff(S_i,\rho_i)\to\Diff(S,\rho)\to\mathfrak S_k(\ell_\rho)\to1
	\]
	where $\mathfrak S_k(\ell_\rho)$ denotes the subgroup of permutations preserving $\ell_\rho$.
	Hence, the isotropy group of $\rho$ is isomorphic to the semidirect product 
	\begin{equation}\label{E:semidirect}
		\Diff(S,\rho)\cong (S^1)^k\rtimes\mathfrak S_k(\ell_\rho)
	\end{equation}
	for the permutation action of $\mathfrak S_k(\ell_\rho)$ on the $k$-torus $(S^1)^k$.

	Via the canonical identification $\Omega^1(S;|\Lambda|_S)=\prod_{i=1}^k\Omega^1(S_i;|\Lambda|_{S_i})$ we have 
	\[
		\mathcal O_\rho=\bigcup_{\sigma\in\mathfrak S_k}\prod_{i=1}^k\mathcal O_{\rho_{\sigma(i)}}
	\]
	where $\mathcal O_{\rho_i}$ denotes the $\Diff(S_i)$ orbit of $\rho_i$.
	As each $\mathcal O_{\rho_i}$ is a splitting smooth submanifold of $\Omega^1(S_i;|\Lambda|_{S_i})$, we conclude that 
	$\mathcal O_\rho$ is a splitting smooth submanifold in $\Omega^1(S;|\Lambda|_S)$ of codimension $k$. 
	As explained near the end of Section~\ref{SS:aug}, this implies that $\mathcal G_\rho$ in~\eqref{E:grh} is a splitting smooth submanifold of $\mathcal G$.
	Hence, $\mathcal G_\rho$ is a space of augmented closed curves in $M$ which admits a natural Fr\'echet manifold structure.

	The map in \eqref{E:barJL} restricts to a $\Diff(M)$ equivariant injective map
	\begin{equation}\label{E:barJLrho}
		\bar J^\rho_L\colon\mathcal G_\rho\to\mathfrak X_c(M)^*.
	\end{equation}
	According to Theorem~\ref{epdiff}, the $\Diff_c(M)$ action on $\mathcal G_\rho$ is infinitesimally transitive.
	Proceeding as in the proof of \cite[Proposition~2]{HV04} we see that this action is locally transitive.
	Hence, \eqref{E:barJLrho} identifies each connected component of $\mathcal G_\rho$ with a coadjoint orbit of $\Diff_c(M)_0$.

	The reduced symplectic form $\omega_\rho$ on $\mathcal G_\rho$ admits a concrete description, as in \cite[Theorem~2.3.12]{MMOPR07},
	with the help of a principal connection on the principal $\Diff(S)$ bundle $\Emb(S,M)\to\Gr_S(M)$.
	The most convenient one is the connection induced by a Riemannian metric on $M$:
	\[
		A\in\Omega^1(\Emb(S,M);\mathfrak X(S)),\quad A(v_\varphi)=T\varphi^{-1}\circ v_\varphi^\top,
	\]
	where $v_\varphi\in T_\varphi\Emb(S,M)=\Gamma(\varphi^*TM)$ and $\varphi^{-1}\colon\varphi(S)\to S$.
	Here $v_\varphi=v_\varphi^\top+v_\varphi^\perp$ denotes the decomposition into tangential and orthogonal part along $\varphi(S)$ provided by the Riemannian metric.

	The orthogonal decomposition $TM|_N=TN\oplus T^\perp N$ with respect to the chosen Riemannian metric on $M$ yields the decomposition $T^*M|_N=T^*N\oplus\Ann(TN)$.
	Consequently, each element $(N,\gamma)\in\mathcal G_\rho$ can be uniquely written as the sum of $(N,\gamma^\top)\in\Gr_{S,\rho}^\deco(M)$ 
	and $(N,\gamma^\perp)\in T^*_\reg\Gr_S(M)$, whence the identification
	\begin{multline*}
		\mathcal G_\rho=T^*_\reg\Gr_S(M)\times_{\Gr_S(M)}\Gr_{S,\rho}^\deco(M)
		\\
		=\left\{(N,\gamma,\rho_N):\begin{array}{c}N\in\Gr_S(M),\ \gamma\in\Gamma(|\Lambda|_N\otimes\Ann(TN))\\ \rho_N\in\Omega^1(N;|\Lambda|_N),\ (N,\rho_N)\cong(S,\rho)\end{array}\right\}.
	\end{multline*}
	The reduced symplectic form becomes
	\begin{equation}\label{E:orr}
		\omega_\rho=-d\theta^{T^*_\reg\Gr_S(M)}-\beta,
	\end{equation}
	where the pullback of $\beta\in\Omega^2(\Gr_{S,\rho}^\deco(M))$ to $\Emb(S,M)\times\mathcal O_\rho$ via \eqref{gero} 
	is the sum of the coadjoint orbit symplectic form on $\mathcal O_\rho$ and $d\alpha$, where
	\[
		\alpha\in\Omega^1(\Emb(S,M)\times\mathcal O_\rho),\quad\alpha_{(\varphi,\nu)}(v_\varphi,\ad^*_\xi\nu)=\int_S(\nu,A(v_\varphi)).
	\]

	We summarize these observations in the following result.

	\begin{theorem}\label{T:EPDiff.red1dim}
		In the situation described above, each connected component of $T^*_\reg\Gr_S(M)\times_{\Gr_S(M)}\Gr_{S,\rho}^\deco(M)$, 
		endowed with the symplectic form in \eqref{E:orr} and the natural $\Diff_c(M)_0$ action,
		is symplectomorphic to a coadjoint orbit of $\Diff_c(M)_0$ via the equivariant moment map 
		\begin{gather*}
			J\colon T^*_\reg\Gr_S(M)\times_{\Gr_S(M)}\Gr_{S,\rho}^\deco(M)\to\mathfrak X_c(M)^*
			\\
			\langle J(N,\gamma,\rho_N),X\rangle=\int_N\gamma(X|_N)+\int_N\rho_N(X|_N^\top),
		\end{gather*}
		where $N\in\Gr_S(M)$, $\gamma\in\Gamma(|\Lambda|_N\otimes\Ann(TN))$, $\rho_N\in\Omega^1(N;|\Lambda|_N)$ such that $(N,\rho_N)\cong(S,\rho)$, and $X\in\mathfrak X_c(M)$.
		Moreover, $J$ identifies certain unions of connected components of $T^*_\reg\Gr_S(M)\times_{\Gr_S(M)}\Gr_{S,\rho}^\deco(M)$ with coadjoint orbits of the full group $\Diff_c(M)$.
	\end{theorem}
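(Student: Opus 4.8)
The plan is to follow the general reduction scheme already executed in Sections~\ref{SS:EPDiff.zero} and~\ref{SS:EPDiff.nonzero}, and to assemble the present statement from the pieces prepared there. Recall that by the discussion near the end of Section~\ref{SS:aug}, the choice of nowhere-zero $\rho$ with $S$ one-dimensional makes $\mathcal O_\rho$ a splitting submanifold of $\Omega^1(S;|\Lambda|_S)$ of finite codimension $k$, and hence every term in \eqref{E:rhrh} is a splitting smooth submanifold; in particular $\mathcal G_\rho$ acquires a Fr\'echet manifold structure. The first step is therefore to record, via the orthogonal decomposition $T^*M|_N=T^*N\oplus\Ann(TN)$ furnished by a Riemannian metric on $M$, the identification $\mathcal G_\rho\cong T^*_\reg\Gr_S(M)\times_{\Gr_S(M)}\Gr_{S,\rho}^\deco(M)$ displayed just before the statement; this is a $\Diff(M)$-equivariant diffeomorphism, and one checks it is independent of the metric up to isomorphism (though the symplectic form's presentation is not).

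The second step is the symplectic reduction itself. By Theorem~\ref{epdiff} the right moment map $J_R^{\mathcal E}$ has $\mathcal O_\rho$ as a (splitting) regular value in the orbit sense, and the Marsden--Weinstein quotient $(J_R^{\mathcal E})^{-1}(\mathcal O_\rho)/\Diff(S)$ carries a reduced weak symplectic form $\omega_\rho$; by construction this quotient is exactly $\mathcal G_\rho$. To obtain the concrete formula \eqref{E:orr} one repeats the computation of \cite[Theorem~2.3.12]{MMOPR07} in the present infinite-dimensional setting, using the principal connection $A$ on $\Emb(S,M)\to\Gr_S(M)$ induced by the metric on $M$: the reduced form splits as the cotangent part $-d\theta^{T^*_\reg\Gr_S(M)}$ coming from the $T^*_\reg\Gr_S(M)$ factor, minus the closed $2$-form $\beta$ whose pullback to $\Emb(S,M)\times\mathcal O_\rho$ is the coadjoint-orbit form on $\mathcal O_\rho$ plus $d\alpha$ with $\alpha$ the curvature-type term written above. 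One must verify that $\alpha$ is well defined (i.e.\ descends along \eqref{gero}) and that $d\alpha$ together with the orbit form is basic — the usual connection-dependent bookkeeping.

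The third step is to pass from "infinitesimally transitive" to "locally transitive" and thence to coadjoint orbits. Theorem~\ref{epdiff} gives that the level set $(J_R^{\mathcal E})^{-1}(\mathcal O_\rho)$ is an orbit of $\Diff_c(M)$ up to infinitesimal transitivity, hence so is $\mathcal G_\rho$ after dividing by $\Diff(S)$; upgrading this to genuine local transitivity is done exactly as in the proof of \cite[Proposition~2]{HV04}, by integrating the infinitesimal transitivity with a suitable choice of compactly supported vector fields and invoking the inverse function theorem in the Fr\'echet category along the finite-codimensional leaves. Once local transitivity is in hand, the restriction $\bar J_L^\rho$ of \eqref{E:barJL} is $\Diff(M)$-equivariant and injective on $\mathcal G_\rho$ (injectivity because a nowhere-zero augmentation is determined by the linear functional it induces on $\mathfrak X_c(M)$), so it maps each connected component bijectively onto a coadjoint orbit of $\Diff_c(M)_0$, and the general identity $(\bar J_L^\rho)^*\omega^\KKS=\omega_\rho$ for reduced moment maps shows this is a symplectomorphism. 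Translating $\bar J_L^\rho$ through the metric splitting of the first step yields the stated formula $\langle J(N,\gamma,\rho_N),X\rangle=\int_N\gamma(X|_N)+\int_N\rho_N(X|_N^\top)$, since $\gamma+\rho_N$ recovers the original augmentation and $\gamma$ annihilates $TN$. Finally, the last sentence about the full group $\Diff_c(M)$ follows because $\Diff_c(M)$ permutes the connected components of $\mathcal G_\rho$, so a $\Diff_c(M)$-orbit is a union of $\Diff_c(M)_0$-orbits matched up by these permutations.

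I expect the main obstacle to be the second step: justifying the reduced-form formula \eqref{E:orr} rigorously in infinite dimensions, i.e.\ checking that $(J_R^{\mathcal E})^{-1}(\mathcal O_\rho)$ is a splitting submanifold on which $\Diff(S)$ acts with slices, that $\omega_\rho$ is well defined and weakly nondegenerate, and that the connection-theoretic identification of $\omega_\rho$ with $-d\theta^{T^*_\reg\Gr_S(M)}-\beta$ goes through — the finite-codimension of $\mathcal O_\rho$ is what makes all of this tractable, but the verification that $\alpha$ and $\beta$ descend correctly along the associated-bundle presentation \eqref{gero} is the genuinely technical point. Local transitivity (step three) is comparatively routine given the template in \cite{HV04}, and the algebraic identification of $J$ (step one and the translation at the end) is bookkeeping.
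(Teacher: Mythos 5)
Your proposal follows essentially the same route as the paper: the smooth structure on $\mathcal G_\rho$ via the splitting submanifold $\mathcal O_\rho$ and the sequence \eqref{E:rhrh}, the reduced form \eqref{E:orr} computed with the metric-induced connection as in \cite[Theorem~2.3.12]{MMOPR07}, local transitivity upgraded from Theorem~\ref{epdiff} as in \cite[Proposition~2]{HV04}, and the identification of components with coadjoint orbits via the injective equivariant map $\bar J^\rho_L$. The steps you flag as technically delicate (the descent of $\alpha$, $\beta$ and the reduced form) are exactly the points the paper handles by the same connection-theoretic bookkeeping, so your outline matches the intended argument.
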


\section{The EPDiffvol dual pair for codimension at least two}\label{S:EPDiffvol}

	Suppose $\mu$ is a volume form on $M$. 
	We will denote the Lie algebra of compactly supported exact divergence free vector fields by 
	\[
		\mathfrak X_{c,\ex}(M,\mu):=\bigl\{X\in\mathfrak X_c(M):i_X\mu=d\alpha,\alpha\text{ with compact support}\bigr\}.
	\]
	This is an ideal in the Lie algebra $\mathfrak X_c(M,\mu)$ of compactly supported divergence free vector fields.
	Correspondingly, we let $\Diff_{c,\ex}(M,\mu)$ denote the group of compactly supported exact volume preserving diffeomorphisms, 
	obtained by integrating time dependent vector fields in $\mathfrak X_{c,\ex}(M,\mu)$.
	This is a normal subgroup in the group $\Diff_c(M,\mu)$ of compactly supported volume preserving diffeomorphisms.

\subsection{EPDiffvol dual pair}\label{SS:EPDiffvol}

	Assuming
	\begin{equation}\label{E:codim}
		\dim M-\dim S\geq2
	\end{equation}
	we will show that the dual pair in \eqref{hm} remains a dual pair when the $\Diff_c(M)$ action is restricted to the subgroup $\Diff_{c,\ex}(M,\mu)$.
	Indeed, the following lemma shows that the infinitesimal orbits of these two groups acting on $\mathcal E$ coincide.

	\begin{lemma}\label{L:zetaE}
		If $\Phi\in\mathcal E$ and $X\in\mathfrak X(M)$ then there exists $Y\in\mathfrak X_{c,\ex}(M)$ such that $\zeta^{\mathcal E}_X(\Phi)=\zeta^{\mathcal E}_Y(\Phi)$, 
		where $\zeta^{\mathcal E}$ denotes the infinitesimal $\Diff(M)$ action on $\mathcal E$.
	\end{lemma}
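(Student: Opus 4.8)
The plan is to reduce the statement to a purely local problem near the submanifold $N:=\varphi(S)$, where $\varphi$ is the embedding underlying $\Phi=(\varphi,\alpha)$, and then to produce the required exact divergence free vector field by modifying $X$ away from $N$. The starting observation is that $\zeta^{\mathcal E}_X(\Phi)$ depends only on the $1$-jet of $X$ along $N$: writing $\zeta^{\mathcal E}_X(\Phi)$ in the pair picture $(\varphi,\alpha)$, the $\varphi$-component is $X\circ\varphi$, which depends only on $X|_N$, and the $\alpha$-component involves the derivative of $X$ in directions tangent to $N$ together with the values of $X$ along $N$, hence only on the restriction of $X$ and of $TX$ to $N$. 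Therefore it suffices to find $Y\in\mathfrak X_{c,\ex}(M,\mu)$ whose $1$-jet along $N$ agrees with that of $X$. Actually even less is needed: since $\alpha$ is $|\Lambda|_S$-valued and pairs with covectors, only the part of $X$ and its first-order data that the homomorphism $\Phi$ sees is relevant, but aiming for full $1$-jet agreement along $N$ is the cleanest route and loses nothing.

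First I would choose a tubular neighbourhood $U$ of $N$ together with coordinates in which $\mu$ is standard, i.e.\ identify $U$ with a neighbourhood of the zero section of the normal bundle and use a Moser-type normalisation so that $\mu$ becomes a fixed model volume form there; the codimension hypothesis $\dim M-\dim S\ge 2$ will be used here and, more importantly, in the next step. Next, the core construction: given the prescribed $1$-jet of $X$ along $N$, I want a compactly supported vector field $Y$, supported in $U$, that is divergence free, is in fact exact (so $i_Y\mu=d\beta$ for compactly supported $\beta$), and has the same $1$-jet along $N$ as $X$. The natural device is to look for $\beta\in\Omega^{\dim M-1}_c(U)$ with $d\beta=i_Y\mu$; prescribing the $1$-jet of $Y$ along $N$ amounts to prescribing a finite-order jet of $\beta$ along $N$, and the obstruction to divergence-freeness of a vector field with a given $1$-jet along a submanifold of codimension $\ge 2$ disappears because the relevant compatibility condition ($di_X\mu$ vanishing to the appropriate order along $N$) can be corrected by adjusting higher-order terms of $Y$ transverse to $N$ — there is "enough room" in codimension at least two. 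Concretely one writes $X$ near $N$ as $X^\top+X^\perp$, handles the tangential part by an ambient extension that is automatically divergence free after a harmless modification, and handles the normal part using that in codimension $\ge 2$ one can add to $X^\perp$ a correction whose divergence cancels $\operatorname{div}(X)$ to first order along $N$ without changing the $1$-jet we care about. Finally, multiply the resulting local vector field by a bump function equal to $1$ near $N$ and supported in $U$; this does not affect the $1$-jet along $N$, makes $Y$ compactly supported, and $i_Y\mu$ is then a compactly supported closed form on $M$; by choosing the construction so that $i_Y\mu$ is exact with compactly supported primitive (which is automatic once $Y$ is supported in the contractible-enough tube, or can be arranged directly), we get $Y\in\mathfrak X_{c,\ex}(M,\mu)$.

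The main obstacle I anticipate is precisely the divergence-free (and exactness) bookkeeping in the middle step: one must check that the linear algebra of prescribing a $1$-jet along $N$ is compatible with $\operatorname{div}Y=0$, and this is exactly where codimension $\ge 2$ enters — in codimension one the component of $\mu$ "dual" to the normal direction would force a constraint on $X^\perp$ along $N$ that cannot in general be met, which is consistent with the dichotomy emphasised in the introduction. I would isolate this as the heart of the argument: show that the map sending a compactly supported divergence free (resp.\ exact divergence free) $Y$ supported near $N$ to its $1$-jet along $N$ is surjective onto the space of all $1$-jets, using the codimension hypothesis, and then everything else is the routine reduction via tubular neighbourhoods and bump functions sketched above.
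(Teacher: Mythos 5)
There is a genuine gap at the heart of your plan: the reduction to \emph{full} $1$-jet agreement along $N$ is not harmless --- it is impossible in general. For any $Y\in\mathfrak X(M,\mu)$ one has $(L_Y\mu)|_N=0$, and $(L_Y\mu)|_N$ is determined by the $1$-jet of $Y$ along $N$ alone; so if $Y$ had the same $1$-jet along $N$ as $X$, you would need $(L_X\mu)|_N=0$, which fails for a general $X\in\mathfrak X(M)$. Your proposed escape --- correcting the compatibility condition ``by adjusting higher-order terms of $Y$ transverse to $N$'' --- cannot work, precisely because higher-order terms do not enter $(L_Y\mu)|_N$; and the codimension hypothesis does nothing to remove this obstruction for full $1$-jets. (A further slip: the vertical component of the cotangent lift at $\alpha_x$ is built from $\alpha_x\circ D_xX$, i.e.\ derivatives of $X$ in \emph{all} directions at $x$ contracted with $\alpha$, not only tangential derivatives.)

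The paper's proof exploits exactly the slack you set aside: $\zeta^{\mathcal E}_X(\Phi)$ depends only on $X|_N$ and on $\alpha\circ DX$ along $N$, i.e.\ on the part of the $1$-jet seen by $\alpha$. One first constructs $Z\in\mathfrak X(M)$ with $Z|_N=0$, $\alpha(D_xZ)=0$ and $\tr(D_xZ)>0$, by gluing in the fiberwise linear field associated with $\tilde Z=P_{\ker\alpha}P_{T^\perp N}$, whose trace is at least $\rk(T^\perp N)-1$; this is the first place where $\codim\geq2$ is used, and it is why the codimension-one analogue, Lemma~\ref{L:zetaT*M0}, needs the extra hypothesis that $\iota_N^*\alpha$ is nowhere zero. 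Subtracting $fZ$ with $f|_N=(L_X\mu)|_N/(\tr(DZ|_N)\mu)$ changes neither $X|_N$ nor $\alpha\circ DX$ along $N$, but kills the divergence along $N$. The relative Poincar\'e lemma then yields, on a tubular neighbourhood $U$ of $N$, a field $\tilde Y$ with $L_{\tilde Y}\mu=0$ and the same $1$-jet along $N$ as the corrected field; finally, since $U$ retracts onto $N$ and $\dim M-1>\dim N$ (the second use of $\codim\geq2$), the closed form $i_{\tilde Y}\mu$ is exact on $U$, and cutting off a primitive produces $Y\in\mathfrak X_{c,\ex}(M,\mu)$ agreeing with $\tilde Y$ near $N$. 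Your ``contractible-enough tube'' remark gestures at this last step, but without the divergence correction along $N$ --- the step your proposal replaces by an impossible jet-matching --- the construction never gets that far.
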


	Recall that the infinitesimal action of $X\in\mathfrak X(M)$ on $\mathcal E=\Emb_\lin(|\Lambda|_S^*,T^*M)$ is
	\[
		\zeta_X^{\mathcal E}(\Phi)=\zeta_X^{T^*M}\circ\Phi,
	\]
	where $\zeta^{T^*M}$ denotes the infinitesimal $\Diff(M)$ action on $T^*M$.
	Thus, Lemma~\ref{L:zetaE} is an immediate consequence of the following

	\begin{lemma}\label{L:zetaT*M}
		Let $N\subseteq M$ be a compact submanifold without boundary and {with} codimension $\dim M-\dim N\geq2$.
		Moreover, suppose $\alpha\in\Gamma(T^*M|_N)$ is a nowhere vanishing section and $X\in\mathfrak X(M)$.
		Then there exists an exact divergence free vector field $Y\in\mathfrak X_{c,\ex}(M,\mu)$ such that $\zeta^{T^*M}_X\circ\alpha=\zeta^{T^*M}_Y\circ\alpha$.
		In particular, $X|_N=Y|_N$. 
	\end{lemma}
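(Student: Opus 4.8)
The plan is to reduce the statement to the two pieces of structure it really involves: first, we only need to control the jet of $Y$ along $N$, since $\zeta^{T^*M}_X\circ\alpha$ depends on $X$ only through its restriction to $N$ together with the normal first-order data $\nabla X|_N$ (this is visible from the formula for the cotangent lift $\zeta^{T^*M}_X$, which acts on a covector at $p\in N$ by the Lie derivative along $X$, hence involves $X(p)$ and the differential of $X$ at $p$). So it suffices to produce $Y\in\mathfrak X_{c,\ex}(M,\mu)$ whose $1$-jet along $N$ agrees with that of $X$. Second, I would use the codimension hypothesis $\dim M-\dim N\ge 2$ precisely to gain the freedom needed to simultaneously arrange the divergence-free and exactness conditions: in codimension at least two the prescribed $1$-jet of $X$ along $N$ does not obstruct being divergence free, because one can modify $X$ in normal directions without changing its values or tangential derivatives on $N$.

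Concretely, I would work in a tubular neighborhood $\pi\colon E\to N$ of $N$ with a fiber metric, identifying a neighborhood of $N$ in $M$ with a disk bundle. Write $X$ near $N$ using this splitting. The first step is to replace $X$ by a compactly supported field $X_1$, supported in the tubular neighborhood, with the same $1$-jet along $N$; this is a routine cutoff. The second step is to correct $X_1$ to be divergence free. Here I would solve, fiberwise or by a Poincaré-lemma/partition argument on the tube, for a compactly supported function or vector field so that $i_{X_1}\mu - i_{X_2}\mu$ is exact with compactly supported primitive and $X_2$ is divergence free; the key point is that along $N$ we are free to add to $X_1$ any vector field vanishing to first order along $N$, and because the normal bundle has rank $\ge 2$ there is enough room (e.g.\ using a radial vector field in the fibers, whose divergence can be prescribed) to absorb $\operatorname{div}_\mu X_1$ without touching the $1$-jet along $N$. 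The third step is exactness: a compactly supported divergence-free $Y=X_2$ on $M$ has $i_Y\mu$ a closed compactly supported $(m-1)$-form; its class in $H^{m-1}_c(M)$ is the only obstruction to $Y\in\mathfrak X_{c,\ex}(M,\mu)$, and one can kill this class by a further compactly supported divergence-free correction supported away from $N$ (again possible precisely because we have codimension $\ge 2$, so $M\setminus N$ is connected and carries enough divergence-free fields), leaving the $1$-jet along $N$ untouched. Combining, $\zeta^{T^*M}_Y\circ\alpha=\zeta^{T^*M}_{X}\circ\alpha$ and in particular $Y|_N=X|_N$.

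I expect the main obstacle to be the second step, namely making the correction \emph{divergence free} while keeping the full $1$-jet along $N$ fixed and keeping compact support. One must check that $\operatorname{div}_\mu X_1$, which a priori is an arbitrary compactly supported function on the tube, can be written as the divergence of a compactly supported vector field that vanishes to first order along $N$; the radial/fiberwise construction gives this, but one has to verify the vanishing order and that the resulting field is genuinely smooth across $N$ (the codimension $\ge 2$ assumption is what makes the fiberwise solution $r\mapsto$ (primitive of $r^{n-1}f$)$/r^{n-1}$ type expression smooth at $r=0$, where $n=\dim M-\dim N$). Once that is in hand, the exactness correction (third step) is comparatively soft, being a purely cohomological adjustment localized away from $N$.
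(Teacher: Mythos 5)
Your reduction in the first step is too strong, and this is where the argument breaks down. Matching the \emph{full} $1$-jet of $X$ along $N$ by a divergence free field is impossible in general: the function $\operatorname{div}_\mu X=L_X\mu/\mu$ restricted to $N$ depends only on the $1$-jet of $X$ along $N$, so a field $Y$ with $L_Y\mu=0$ and the same $1$-jet as $X$ along $N$ can exist only if $(L_X\mu)|_N=0$, which fails for a general $X$. Your proposed repair in the second step is then self-contradictory: a correction ``vanishing to first order along $N$'' has vanishing divergence on $N$ and hence cannot absorb $\operatorname{div}_\mu X_1$ there; while the radial field $g(r)\partial_r$ with $g(r)=r^{1-n}\int_0^r s^{n-1}f\,ds$, which does absorb it, has derivative along $N$ equal (up to a factor) to the projection onto the normal directions, so it \emph{does} change the $1$-jet on $N$ and, generically, also changes $\alpha\circ DY$ there, destroying the required identity $\zeta^{T^*M}_Y\circ\alpha=\zeta^{T^*M}_X\circ\alpha$. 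Note also that your argument never uses the covector $\alpha$ beyond reading off which jet data matter, whereas the statement to be proved is exactly an equality of cotangent lifts evaluated on $\alpha$.

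The missing idea is that the condition $\zeta^{T^*M}_X\circ\alpha=\zeta^{T^*M}_Y\circ\alpha$ is strictly weaker than $1$-jet equality: it says $Y|_N=X|_N$ and $\alpha\bigl(D_x(X-Y)\bigr)=0$ for all $x\in N$, and this slack in the directions killed by $\alpha$ is what must be exploited. The paper's proof first constructs $Z\in\mathfrak X(M)$ with $Z|_N=0$, $\alpha(D_xZ)=0$ and $\tr(D_xZ)>0$ (coming from the fiberwise endomorphism $P_{\ker\alpha}P_{T^\perp N}$; the codimension hypothesis $\dim M-\dim N\geq2$ is used precisely to keep this trace positive), and replaces $X$ by $\tilde X=X-fZ$ with $f|_N=(L_X\mu)|_N/\bigl(\tr(DZ|_N)\mu\bigr)$, so that $(L_{\tilde X}\mu)|_N=0$ while $\zeta^{T^*M}_{\tilde X}\circ\alpha=\zeta^{T^*M}_X\circ\alpha$. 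Only after this normalization does a correction with vanishing $1$-jet along $N$ (via the relative Poincar\'e lemma) make the field divergence free near $N$, and a cut-off of a primitive of $i_{\tilde Y}\mu$ on a tubular neighborhood (exactness there again uses $\dim N\leq\dim M-2$) produces $Y\in\mathfrak X_{c,\ex}(M,\mu)$. Your third step, adjusting the class in $H^{\dim M-1}_c(M)$ away from $N$, is sound in spirit but becomes unnecessary once the extension is done at the level of such a primitive; without the first modification, however, no choice of corrections supported away from the $1$-jet of $X$ can succeed.
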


	\begin{proof}
		We begin by constructing a vector field $Z\in\mathfrak X(M)$ such that
		\[
		        Z_x=0,\qquad\alpha(D_xZ)=0,\qquad\text{and}\qquad\tr(D_xZ)>0
		\]
		for all $x\in N$, where $D_xZ\in\eend(T_xM)$ denotes the (canonical) derivative of $Z$ at the zero $x$.
		To this end we fix a fiber-wise Euclidean inner product on $TM|_N$ and consider $\tilde Z\in\Gamma(\eend(TM|_N))$ 
		given by $\tilde Z=P_{\ker\alpha}P_{T^\perp N}$,
		where $P_{\ker\alpha}\in\Gamma(\eend(TM|_N))$ denotes the fiber-wise orthogonal projection onto the subbundle $\ker\alpha\subseteq TM|_N$ 
		and $P_{T^\perp N}\in\Gamma(\eend(TM|_N))$ denotes the fiber-wise orthogonal projection onto the orthogonal complement of $TN$ in $TM|_N$.
		It is straightforward to check
		\[
			\rk(T^\perp N)-1\leq\tr(\tilde Z)\leq\rk(T^\perp N).
		\]
		In particular, $\tr(\tilde Z)>0$ in view of the assumption $\dim M-\dim N\geq2$.
		Using a trivialization $T(T^\perp N)\cong(p^{T^\perp N})^*TM|_N$, the section $\tilde Z$ gives rise to a fiber-wise linear vector field on the total space of the normal bundle $T^\perp N$.
		With the help of a tubular neighborhood of $N$ this can be chopped off and glued into $M$ to obtain a vector field $Z$ on $M$ such that $Z_x=0$ and $D_xZ=\tilde Z_x$ for all $x\in N$.
		Hence, by construction, we have $\alpha(D_xZ)=0$ and $\tr(D_xZ)>0$ for each $x\in N$.

		We will next construct a smooth function $f$ on $M$ such that the vector field $\tilde X:=X-fZ$ satisfies
		\begin{equation}\label{E:tX}
			\zeta^{T^*M}_{\tilde X}\circ\alpha=\zeta^{T^*M}_X\circ\alpha\qquad\text{and}\qquad(L_{\tilde X}\mu)|_N=0.
		\end{equation}
		Note that the first equation follows from $(fZ)_x=0$ and $\alpha(D_x(fZ))=0$ for $x\in N$.
		Moreover, we clearly have
		\[
			L_{\tilde X}\mu=L_X\mu-fL_Z\mu-(Z\cdot f)\mu.
		\]
		Restricting to $N$ and using $Z|_N=0$, we obtain
		\[
			(L_{\tilde X}\mu)|_N=(L_X\mu)|_N-f|_N\tr(DZ|_N)\mu.
		\]
		Hence, any function $f$ with restriction
		\[
			f|_N=\frac{(L_X\mu)|_N}{\tr(DZ|_N)\mu}
		\]
		will do the job.

		Proceeding as in \cite[Proposition~2]{HV04}, we use the relative Poincar\'e lemma, see \cite[\S43.10]{KM97} for instance, 
		to conclude that there exist an open neighborhood $U$ of $N$ in $M$ and a differential form $\lambda$ of degree $\dim M-1$ on $U$ 
		such that $d\lambda=(L_{\tilde X}\mu)|_U$ and $(j^1\lambda)|_N=0$ where $(j^1\lambda)|_N$ denotes the 1-jet of $\lambda$ along $N$.
		The equation $i_Z\mu=\lambda$ defines a vector field $Z$ on $U$ such that $L_Z\mu=L_{\tilde X}\mu$ on $U$ and $(j^1Z)|_N=0$.
		For the vector field $\tilde Y:=\tilde X|_U-Z$ on $U$ we obtain
		\begin{equation}\label{E:tY}
			(j^1\tilde Y)|_N=(j^1\tilde X)|_N\qquad\text{and}\qquad L_{\tilde Y}\mu=0.
		\end{equation}
		Since the codimension of $N$ is at least 2, there exists a vector field $Y\in\mathfrak X_{c,\ex}(M,\mu)$ which coincides with $\tilde Y$ on a neighborhood of $N$.
		Combining this with \eqref{E:tX} and \eqref{E:tY}, we obtain $\zeta^{T^*M}_X\circ\alpha=\zeta^{T^*M}_Y\circ\alpha$.
	\end{proof}

	Combining these observations with the EPDiff dual pair in Theorem~\ref{epdiff}, we obtain the following result.

	\begin{theorem}[EPDiffvol dual pair for codimension at least two]\label{T:dual.pair}
		Let $\mu$ be a volume form on a smooth manifold $M$, and suppose $S$ is a closed manifold such that $\dim M-\dim S\geq2$.
		Then the weak symplectic Fr\'echet manifold $\mathcal E=T^*_{\reg,\times}\Emb(S,M)$ in \eqref{E:varE}, together with the commuting actions of $\Diff_{c,ex}(M,\mu)$ and $\Diff(S)$, 
		and the equivariant moment maps associated with the canonical invariant 1-form $\theta^{\mathcal E}$, 
		\begin{equation}\label{E:dp.vol}
			\mathfrak X_{c,\ex}(M,\mu)^*\xleftarrow{J_L^{\mathcal E}}\mathcal E\xrightarrow{J_R^{\mathcal E}}\Omega^1(S;|\Lambda|_S)\subseteq\mathfrak X(S)^*,
		\end{equation}
		constitute a symplectic dual pair with mutually symplectic orthogonal orbits.
		More explicitly, the moment maps are
		\begin{equation}\label{E:moma.vol}
			\langle J^{\mathcal E}_L(\varphi,\alpha),X\rangle=\int_S\alpha(X\circ\varphi)
			\qquad\text{and}\qquad
			J^{\mathcal E}_R(\varphi,\alpha)=\varphi^*\alpha,
		\end{equation}
		where $X\in\mathfrak X_{c,\ex}(M,\mu)$ and $(\varphi,\alpha)\in\mathcal E$.
		An analogous statement remains true if $\Diff_{c,\ex}(M,\mu)$ and $\mathfrak X_{c,\ex}(M,\mu)$ are replaced with $\Diff_c(M,\mu)$ and $\mathfrak X_c(M,\mu)$, respectively.
	\end{theorem}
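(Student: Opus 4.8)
The plan is to \textbf{derive Theorem~\ref{T:dual.pair} directly from Theorem~\ref{epdiff} (the EPDiff dual pair) together with Lemma~\ref{L:zetaE}}. The point is that restricting the left action from $\Diff_c(M)$ to the subgroup $\Diff_{c,\ex}(M,\mu)$ does not change the key geometric data entering the definition of a dual pair, namely the infinitesimal orbit distributions on $\mathcal E$. Concretely, write $\mathfrak g_{\mathcal E}$, $\mathfrak g^{\ex}_{\mathcal E}$, $\mathfrak g^{\mu}_{\mathcal E}$ and $\mathfrak h_{\mathcal E}$ for the distributions on $\mathcal E$ spanned by the infinitesimal actions of $\Diff_c(M)$, $\Diff_{c,\ex}(M,\mu)$, $\Diff_c(M,\mu)$ and $\Diff(S)$, respectively. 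Since $\mathfrak X_{c,\ex}(M,\mu)\subseteq\mathfrak X_c(M,\mu)\subseteq\mathfrak X_c(M)$, we have the a priori inclusions $\mathfrak g^{\ex}_{\mathcal E}\subseteq\mathfrak g^{\mu}_{\mathcal E}\subseteq\mathfrak g_{\mathcal E}$, while Lemma~\ref{L:zetaE} (applied at each $\Phi\in\mathcal E$, using the hypothesis $\dim M-\dim S\geq2$) gives the reverse inclusion $\mathfrak g_{\mathcal E}\subseteq\mathfrak g^{\ex}_{\mathcal E}$. Hence all three distributions coincide: $\mathfrak g^{\ex}_{\mathcal E}=\mathfrak g^{\mu}_{\mathcal E}=\mathfrak g_{\mathcal E}$ as distributions on $\mathcal E$.

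The remaining steps are then formal. First, the two actions of $\Diff_{c,\ex}(M,\mu)$ and $\Diff(S)$ on $\mathcal E$ commute because the respective $\Diff_c(M)$ and $\Diff(S)$ actions already commute on the cotangent bundle, and $\Diff_{c,\ex}(M,\mu)$ is a subgroup of $\Diff_c(M)$; the open subset $\mathcal E$ is invariant under both. Second, both restricted actions are Hamiltonian with equivariant moment maps obtained by restricting the invariant $1$-form $\theta^{\mathcal E}$: the left moment map is simply the composition of $J_L^{\mathcal E}$ with the Lie-algebra restriction map $\mathfrak X_c(M)^*\to\mathfrak X_{c,\ex}(M,\mu)^*$ dual to the inclusion, which gives the formula $\langle J_L^{\mathcal E}(\varphi,\alpha),X\rangle=\int_S\alpha(X\circ\varphi)$ for $X\in\mathfrak X_{c,\ex}(M,\mu)$, while the right moment map $J_R^{\mathcal E}(\varphi,\alpha)=\varphi^*\alpha$ is literally unchanged; equivariance is inherited from the $\Diff_c(M)$-equivariance, and invariance of $\theta^{\mathcal E}$ under the subgroup is automatic. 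Third, and this is the heart of the matter, the dual pair property: by definition we must check $\ker(TJ_L^{\mathcal E})=(\mathfrak g^{\ex}_{\mathcal E})^{\perp}$ and $\ker(TJ_R^{\mathcal E})=\mathfrak h_{\mathcal E}^{\perp}$ are $\omega^{\mathcal E}$-symplectic orthogonal complements of each other, and likewise the orbit condition \eqref{E:mso}. But $\ker(TJ_L^{\mathcal E})$ for the restricted left moment map equals $(\mathfrak g^{\ex}_{\mathcal E})^{\perp}=(\mathfrak g_{\mathcal E})^{\perp}$ by the distribution equality just established, and $\ker(TJ_R^{\mathcal E})$ and $\mathfrak h_{\mathcal E}$ are unchanged; Theorem~\ref{epdiff} asserts exactly that $(\mathfrak g_{\mathcal E})^{\perp}=\mathfrak h_{\mathcal E}$ and $\mathfrak h_{\mathcal E}^{\perp}=\mathfrak g_{\mathcal E}=\mathfrak g^{\ex}_{\mathcal E}$, so the symplectic-orthogonality conditions transfer verbatim. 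The final sentence of the theorem, with $\Diff_{c,\ex}(M,\mu)$ replaced by $\Diff_c(M,\mu)$, follows identically: the same distribution equality $\mathfrak g^{\mu}_{\mathcal E}=\mathfrak g_{\mathcal E}$ was part of the chain above, and all moment-map formulae and equivariance statements restrict in the same way along $\mathfrak X_c(M)^*\to\mathfrak X_c(M,\mu)^*$.

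The step I expect to require the most care is the passage from ``the infinitesimal orbit distributions coincide'' to ``the kernels of the moment maps coincide'' in the infinite-dimensional setting. The subtlety flagged in the excerpt is that in infinite dimensions the orbit condition \eqref{E:mso} can be strictly stronger than the kernel condition, so one must be careful about which formulation of ``dual pair'' is being transferred. The clean way to handle this is to note that $\ker(TJ_L^{\mathcal E})$ is unchanged under the restriction \emph{as a subspace of each tangent space} — it is cut out by the conditions $\langle TJ_L^{\mathcal E}(\bar\xi),X\rangle=0$ for all $X$ in the \emph{smaller} Lie algebra, but since $\langle J_L^{\mathcal E},X\rangle=\theta^{\mathcal E}(\zeta^{\mathcal E}_X)$ and the $\zeta^{\mathcal E}_X$ for $X\in\mathfrak X_{c,\ex}(M,\mu)$ already span the full distribution $\mathfrak g_{\mathcal E}$ by Lemma~\ref{L:zetaE}, vanishing against the subalgebra forces vanishing against all of $\mathfrak X_c(M)$, so $\ker(TJ_L^{\mathcal E})$ is genuinely the same pointwise. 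Thus whichever of the two (equivalent in finite dimensions, and here coinciding for our $\mathcal E$ by Theorem~\ref{epdiff}) conditions one adopts, it is inherited. One small auxiliary remark worth including is that $\Diff_{c,\ex}(M,\mu)$ and $\Diff_c(M,\mu)$ are the integrated groups of the stated Lie algebras, so ``infinitesimally transitive on level sets'' is the operative notion, exactly as in the discussion following \eqref{E:mso}, and no group-level transitivity claim is needed here.
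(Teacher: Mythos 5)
Your proposal is correct and follows essentially the same route as the paper: the paper likewise deduces Theorem~\ref{T:dual.pair} by combining Lemma~\ref{L:zetaE} (equality of the infinitesimal orbit distributions of $\Diff_{c,\ex}(M,\mu)$, $\Diff_c(M,\mu)$ and $\Diff_c(M)$ on $\mathcal E$) with the EPDiff dual pair of Theorem~\ref{epdiff}, the restriction of the moment maps and equivariance being formal. Your additional care about transferring the kernel/orbit orthogonality conditions in the infinite-dimensional setting is a sensible elaboration of what the paper leaves implicit.
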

	
	The left moment map $J_L^{\mathcal E}$ of the EPDiffvol dual pair in \eqref{E:dp.vol} descends to a $\Diff(M,\mu)$ equivariant injective map
	\begin{equation}\label{E:barJL.mu}
		\bar J_L^{\mathcal E}\colon\mathcal G\to\mathfrak X_{c,\ex}(M,\mu)^*,\quad\langle\bar J_L^{\mathcal E}(N,\gamma),X\rangle
		=\int_N\gamma(X|_N),
	\end{equation}
	where $(N,\gamma)\in\mathcal G$ and $X\in\mathfrak X_{c,\ex}(M,\mu)$.
	Clearly, this map coincides with the map obtained from \eqref{E:barJL} by restricting to the subalgebra $\mathfrak X_{c,\ex}(M,\mu)$.
	The injectivity of \eqref{E:barJL.mu} follows from the injectivity of \eqref{E:barJL} and the last statement in Lemma~\ref{L:zetaT*M}.

\subsection{Reduction at zero}\label{SS:32}

	In this section we discuss symplectic reduction at the level zero for the $\Diff(S)$ action of the EPDiff\-vol dual pair described in Theorem~\ref{T:dual.pair}.
	We will see that each connected component of $\mathcal G^\iso=T^*_{\reg,\times}\Gr_S(M)$, cf.~\eqref{E:Giso}, is a coadjoint orbit of $\Diff_{c,\ex}(M,\mu)$.

	The map $\bar J_L^{\mathcal E}$ in~\eqref{E:barJL.mu} restricts to a $\Diff(M,\mu)$ equivariant injective map
	\begin{equation}\label{E:barJLiso.mu}
		\bar J^\iso_L\colon\mathcal G^\iso\to\mathfrak X_{c,\ex}(M,\mu)^*
	\end{equation}
	which coincides with the map obtained from \eqref{E:barJLiso} by restricting to the subalgebra $\mathfrak X_{c,\ex}(M,\mu)$.
	According to Theorem~\ref{T:dual.pair} the $\Diff_{c,\ex}(M,\mu)$ action on $\mathcal G^\iso$ is infinitesimally transitive.
	Proceeding as in the proof of \cite[Proposition~2]{HV04} we see that the $\Diff_{c,\ex}(M,\mu)$ action on $\mathcal G^\iso$ is locally transitive.
	Hence, \eqref{E:barJLiso.mu} identifies each connected component of $\mathcal G^\iso$ with a coadjoint orbit of $\Diff_{c,\ex}(M,\mu)$.
	Moreover, $(\bar J^\iso_L)^*\omega^\KKS=\omega^\iso$ where $\omega^\KKS$ denotes the Kirillov--Kostant--Souriau symplectic form on the coadjoint orbit.

	Summarizing these observation we obtain the following result analogous to Theorem \ref{T:t}.

	\begin{corollary}
		Let $\mu$ be a volume form on a smooth manifold $M$, and suppose $S$ is a closed manifold such that $\dim M-\dim S\geq2$.
		Then each connected component of $T^*_{\reg,\times}\Gr_S(M)$, endowed with its canonical symplectic form and the cotangent lifted $\Diff_{c,\ex}(M,\mu)$ action, 
		is symplectomorphic to a coadjoint orbit of $\Diff_{c,\ex}(M,\mu)$ via the equivariant moment map 
		\[
			J\colon T^*_{\reg,\times}\Gr_S(M)\to \mathfrak X_{c,\ex}(M,\mu)^*,\quad
			\langle J(N,\gamma),X\rangle=\int_N\gamma(X|_N),
		\]
		where $\gamma\in\Gamma(|\Lambda|_N\otimes\Ann(TN))$ nowhere zero and $X\in\mathfrak X_{c,\ex}(M,\mu)$.
		An analogous statement remains true if $\Diff_{c,\ex}(M,\mu)$ and $\mathfrak X_{c,\ex}(M,\mu)$ are replaced with $\Diff_c(M,\mu)$ and $\mathfrak X_c(M,\mu)$, respectively.
	\end{corollary}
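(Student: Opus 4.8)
The plan is to deduce the Corollary from Theorem~\ref{T:dual.pair} by the same cotangent-bundle-reduction-at-zero argument that established Theorem~\ref{T:t}, merely substituting the smaller group $\Diff_{c,\ex}(M,\mu)$ (or $\Diff_c(M,\mu)$) for $\Diff_c(M)$ throughout. First I would recall that $\mathcal G^\iso=\mathcal E^\iso/\Diff(S)$ with $\mathcal E^\iso=(J_R^{\mathcal E})^{-1}(0)$, that this is exactly the reduced space at zero for the $\Diff(S)$ action, and that by Section~\ref{SS:EPDiff.zero} it carries the reduced symplectic form $\omega^\iso=d\theta^\iso$ which, under the identification \eqref{E:Giso}, is the canonical symplectic form on $T^*_{\reg,\times}\Gr_S(M)$. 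Since the left action of $\Diff_{c,\ex}(M,\mu)$ commutes with the $\Diff(S)$ action and preserves $\theta^{\mathcal E}$, it descends to a Hamiltonian action on $\mathcal G^\iso$ with moment map $\bar J^\iso_L$ of \eqref{E:barJLiso.mu}, namely the restriction of $\bar J^\iso_L$ in \eqref{E:barJLiso} to the subalgebra $\mathfrak X_{c,\ex}(M,\mu)$; this is the map written in the statement.

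Next I would invoke Theorem~\ref{T:dual.pair}: the moment maps $J_L^{\mathcal E}$ and $J_R^{\mathcal E}$ for the commuting $\Diff_{c,\ex}(M,\mu)$ and $\Diff(S)$ actions on $\mathcal E$ form a dual pair with mutually symplectic orthogonal orbits. In particular $\Diff_{c,\ex}(M,\mu)$ acts infinitesimally transitively on the level sets of $J_R^{\mathcal E}$, hence on $\mathcal E^\iso$, and therefore on the quotient $\mathcal G^\iso$. To upgrade infinitesimal transitivity to genuine local transitivity I would argue exactly as in the proof of \cite[Proposition~2]{HV04}: the infinitesimal orbit map is a surjective submersion onto the tangent space of $\mathcal G^\iso$ admitting a smooth local right inverse, so the orbit map integrates to a local diffeomorphism onto a neighborhood. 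Consequently each connected component of $\mathcal G^\iso$ is a single $\Diff_{c,\ex}(M,\mu)$-orbit.

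It then remains to identify these orbits with coadjoint orbits. The map $\bar J^\iso_L$ is $\Diff(M,\mu)$-equivariant and, by the injectivity of \eqref{E:barJL.mu} (which rests on the injectivity of \eqref{E:barJL} together with the last assertion $X|_N=Y|_N$ of Lemma~\ref{L:zetaT*M}), it is injective. An equivariant injective moment map sends each orbit bijectively onto a coadjoint orbit, and a standard computation — already recorded in Section~\ref{SS:EPDiff.zero} — gives $(\bar J^\iso_L)^*\omega^{\KKS}=\omega^\iso$, so the bijection is a symplectomorphism onto the coadjoint orbit equipped with its Kirillov--Kostant--Souriau form. The statement for $\Diff_c(M,\mu)$ and $\mathfrak X_c(M,\mu)$ follows verbatim, using the last sentence of Theorem~\ref{T:dual.pair} which asserts the dual pair property for that larger group as well.

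The only genuine point requiring care — and hence the main obstacle — is local transitivity of the $\Diff_{c,\ex}(M,\mu)$ action on $\mathcal G^\iso$, i.e.\ passing from the infinitesimal statement furnished by the dual pair to an honest local normal form for the orbits in this infinite-dimensional Fr\'echet setting. This is where one must reproduce the mechanism of \cite[Proposition~2]{HV04}: construct a smooth local section of the infinitesimal action and appeal to it, rather than to any implicit or inverse function theorem. Everything else — the reduction picture, the symplectic form, equivariance, and injectivity — is inherited directly from Theorem~\ref{T:dual.pair}, Lemma~\ref{L:zetaT*M}, and the discussion of reduction at zero in Section~\ref{SS:EPDiff.zero}.
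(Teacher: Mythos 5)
Your proposal is correct and follows essentially the same route as the paper: right-leg reduction at zero, infinitesimal transitivity from Theorem~\ref{T:dual.pair}, upgrading to local transitivity as in \cite[Proposition~2]{HV04}, and identification of connected components with coadjoint orbits via the injective equivariant map \eqref{E:barJLiso.mu} with $(\bar J^\iso_L)^*\omega^\KKS=\omega^\iso$. The treatment of the $\Diff_c(M,\mu)$ case via the last assertion of Theorem~\ref{T:dual.pair} also matches the paper.
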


\subsection{Reduction at nonzero levels}\label{SS:33}
	
	Assuming $\dim S=1$ and $\dim M\geq3$ we will now discuss reduction for the $\Diff(S)$ action of the EPDiffvol dual pair in Theorem~\ref{T:dual.pair} 
	at a level $\rho\in\Omega^1(S;|\Lambda|_S)\subseteq\mathfrak X(S)^*$ which is nowhere zero.

	Recall from Section~\ref{SS:EPDiff.nonzero} that in this situation the reduced symplectic space of augmented closed curves,
	\[
		\mathcal G_\rho=T^*_\reg\Gr_S(M)\times_{\Gr_S(M)}\Gr_{S,\rho}^\deco(M),
	\]
	admits a natural Fr\'echet manifold structure.
	The map $\bar J_L^{\mathcal E}$ in~\eqref{E:barJL.mu} restricts to a $\Diff(M,\mu)$ equivariant injective map
	\begin{equation}\label{E:barJLrho.mu}
		\bar J^\rho_L\colon\mathcal G_\rho\to\mathfrak X_{c,\ex}(M,\mu)^*
	\end{equation}
	which coincides with the map obtained from \eqref{E:barJLrho} by restricting to the subalgebra $\mathfrak X_{c,\ex}(M,\mu)$.
	According to Theorem~\ref{T:dual.pair} the $\Diff_{c,\ex}(M,\mu)$ action on $\mathcal G_\rho$ is infinitesimally transitive.
	Proceeding as in the proof of \cite[Proposition~2]{HV04} we see that the $\Diff_{c,\ex}(M,\mu)$ action on $\mathcal G_\rho$ is locally transitive.
	Hence, \eqref{E:barJLrho.mu} identifies each connected component of $\mathcal G_\rho$ with a coadjoint orbit of $\Diff_{c,\ex}(M,\mu)$,
	and we obtain a result analogous to Theorem \ref{T:EPDiff.red1dim}.

	\begin{corollary}\label{C:Riemann}
		In the situation described above, each connected component of $T^*_\reg\Gr_S(M)\times_{\Gr_S(M)}\Gr_{S,\rho}^\deco(M)$, 
		endowed with the symplectic form in \eqref{E:orr} and the natural $\Diff_{c,\ex}(M,\mu)$ action,
		is symplectomorphic to a coadjoint orbit of $\Diff_{c,\ex}(M,\mu)$ via the equivariant moment map 
		\begin{gather*}
			J\colon T^*_\reg\Gr_S(M)\times_{\Gr_S(M)}\Gr_{S,\rho}^\deco(M)\to\mathfrak X_{c,\ex}(M,\mu)^*
			\\
			\langle J(N,\gamma,\rho_N),X\rangle=\int_N\gamma(X|_N)+\int_N\rho_N(X|_N^\top),
		\end{gather*}
		where $N\in\Gr_S(M)$, $\gamma\in\Gamma(|\Lambda|_N\otimes\Ann(TN))$, $\rho_N\in\Omega^1(N;|\Lambda|_N)$ such that $(N,\rho_N)\cong(S,\rho)$, and $X\in\mathfrak X_{c,\ex}(M,\mu)$.
		An analogous statement remains true if $\Diff_{c,\ex}(M,\mu)$ and $\mathfrak X_{c,\ex}(M,\mu)$ are replaced with $\Diff_c(M,\mu)$ and $\mathfrak X_c(M,\mu)$, respectively.
	\end{corollary}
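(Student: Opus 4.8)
The plan is to deduce Corollary~\ref{C:Riemann} from the EPDiffvol dual pair in Theorem~\ref{T:dual.pair} in exactly the same way that Theorem~\ref{T:EPDiff.red1dim} was deduced from the EPDiff dual pair in Theorem~\ref{epdiff}. The only new ingredient compared to Section~\ref{SS:EPDiff.nonzero} is that the left-hand group is now $\Diff_{c,\ex}(M,\mu)$ (resp.\ $\Diff_c(M,\mu)$) rather than $\Diff_c(M)$, and Theorem~\ref{T:dual.pair} already guarantees that the reduction machinery applies verbatim: $\mathcal E$ carries commuting Hamiltonian actions of $\Diff_{c,\ex}(M,\mu)$ and $\Diff(S)$ forming a dual pair with mutually symplectic orthogonal orbits. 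Since $\dim S=1$ and $\dim M\geq3$ we have the codimension bound $\dim M-\dim S\geq2$ required by Theorem~\ref{T:dual.pair}.

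\begin{proof}
	We argue as in Section~\ref{SS:EPDiff.nonzero}, replacing the EPDiff dual pair of Theorem~\ref{epdiff} by the EPDiffvol dual pair of Theorem~\ref{T:dual.pair}.
	Fix $\rho\in\Omega^1(S;|\Lambda|_S)$ nowhere zero.
	Because $\dim S=1$, the discussion in Section~\ref{SS:EPDiff.nonzero} shows that the coadjoint orbit $\mathcal O_\rho$ is a splitting smooth submanifold of $\Omega^1(S;|\Lambda|_S)$, so the reduced space
	\[
		\mathcal G_\rho=(J^{\mathcal E}_R)^{-1}(\mathcal O_\rho)/\Diff(S)=T^*_\reg\Gr_S(M)\times_{\Gr_S(M)}\Gr_{S,\rho}^\deco(M)
	\]
	is a splitting smooth Fr\'echet submanifold of $\mathcal G$, and the reduced symplectic form on $\mathcal G_\rho$ is the form $\omega_\rho$ described in \eqref{E:orr}.
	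This part of the construction depends only on the right leg $J^{\mathcal E}_R$ and the $\Diff(S)$ action, which are unchanged.

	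By Theorem~\ref{T:dual.pair} the $\Diff_{c,\ex}(M,\mu)$ orbits and the $\Diff(S)$ orbits on $\mathcal E$ are mutually symplectic orthogonal; equivalently, $\Diff_{c,\ex}(M,\mu)$ acts infinitesimally transitively on the level sets of $J^{\mathcal E}_R$, hence on $(J^{\mathcal E}_R)^{-1}(\mathcal O_\rho)$, hence on the quotient $\mathcal G_\rho$.
	Concretely, this infinitesimal transitivity is the content of Lemma~\ref{L:zetaE}: the infinitesimal $\Diff_{c,\ex}(M,\mu)$ orbit through any $\Phi\in\mathcal E$ agrees with the infinitesimal $\Diff(M)$ orbit, so the reduced infinitesimal orbit through any point of $\mathcal G_\rho$ agrees with the one computed from the full EPDiff dual pair in Theorem~\ref{T:EPDiff.red1dim}.
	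Proceeding as in the proof of \cite[Proposition~2]{HV04}, infinitesimal transitivity upgrades to local transitivity: one integrates the relevant time dependent vector fields in $\mathfrak X_{c,\ex}(M,\mu)$, which is possible precisely because the vector fields produced in Lemma~\ref{L:zetaT*M} lie in $\mathfrak X_{c,\ex}(M,\mu)$ and depend smoothly on the data.
	Therefore the $\Diff_{c,\ex}(M,\mu)$ action on $\mathcal G_\rho$ is locally transitive.

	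It remains to identify the moment map.
	The restriction of $\bar J_L^{\mathcal E}$ in \eqref{E:barJL.mu} to $\mathcal G_\rho$ gives the $\Diff(M,\mu)$ equivariant map $\bar J^\rho_L$ in \eqref{E:barJLrho.mu}, which is injective by the injectivity of \eqref{E:barJLrho} together with the last statement of Lemma~\ref{L:zetaT*M}; under the identification $\mathcal G_\rho=T^*_\reg\Gr_S(M)\times_{\Gr_S(M)}\Gr_{S,\rho}^\deco(M)$ and the orthogonal splitting $T^*M|_N=T^*N\oplus\Ann(TN)$ it is exactly the map $J$ in the statement, since $\gamma$ contributes $\int_N\gamma(X|_N)$ and $\rho_N$ contributes $\int_N\rho_N(X|_N^\top)$.
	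A locally transitive Hamiltonian action with injective equivariant moment map identifies each connected component of the reduced space, with its reduced symplectic form, with a coadjoint orbit; thus each connected component of $\mathcal G_\rho$ is symplectomorphic, via $J$, to a coadjoint orbit of $\Diff_{c,\ex}(M,\mu)$, and certain unions of components correspond to orbits of the full group $\Diff_c(M,\mu)$.
	The argument for $\Diff_c(M,\mu)$ and $\mathfrak X_c(M,\mu)$ in place of $\Diff_{c,\ex}(M,\mu)$ and $\mathfrak X_{c,\ex}(M,\mu)$ is identical, using the corresponding statement in Theorem~\ref{T:dual.pair}.
\end{proof}

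The only step with any real content is the passage from infinitesimal to local transitivity of the $\Diff_{c,\ex}(M,\mu)$ action on $\mathcal G_\rho$; everything else is bookkeeping that transfers mechanically from Section~\ref{SS:EPDiff.nonzero}. I expect the main obstacle, as in \cite[Proposition~2]{HV04}, to be checking that the vector fields supplied by Lemma~\ref{L:zetaT*M} can be chosen to depend smoothly (indeed, continuously in the appropriate Fr\'echet topology) on the point of $\mathcal E$ and on the infinitesimal deformation, so that the standard path-integration argument producing group elements out of infinitesimal transitivity goes through in this infinite dimensional setting.
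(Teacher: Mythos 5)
Your proposal is correct and follows essentially the same route as the paper: the reduced space $\mathcal G_\rho$, its Fr\'echet structure and reduced form $\omega_\rho$ are taken over unchanged from Section~\ref{SS:EPDiff.nonzero}, infinitesimal transitivity of $\Diff_{c,\ex}(M,\mu)$ comes from Theorem~\ref{T:dual.pair} (via Lemmas~\ref{L:zetaE} and \ref{L:zetaT*M}), local transitivity is obtained as in \cite[Proposition~2]{HV04}, and the injective equivariant map $\bar J^\rho_L$ identifies connected components with coadjoint orbits. This matches the paper's argument in Section~\ref{SS:33}, including the handling of the $\Diff_c(M,\mu)$ variant.
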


\section{Foliations on codimension one nonlinear Grassmannians}\label{S:foliations}

	For the remaining part of the paper we will assume
	\begin{equation}\label{E:codim1}
		\dim M-\dim S=1.
	\end{equation}
	In this situation the volume form gives rise to integrable distributions of finite codimension on $\Gr_S(M)$,
	which are analogous to Weinstein's isodrastic foliation in the symplectic case \cite{W90,L09}.
	The corresponding leaves on $\Emb(S,M)$ will be essential in Section \ref{S:codim.one},
	where we construct dual pairs for the volume preserving diffeomorphism group in the codimension one case.
	These isodrastic leaves for volume forms have been useful in the description of coadjoint orbits
	of the volume preserving diffeomorphism group consisting of vortex sheets, i.e., codimension one submanifolds decorated with closed 1-forms \cite{GBV23}.

\subsection{Isodrastic foliation}\label{iso}

	For $N\in\Gr_S(M)$ we let $\mathcal D_{N,\ex}$ denote the subspace consisting of all tangent vectors $X\in T_N\Gr_S(M)=\Gamma(TM|_N/TN)$ such that $i_X\mu$ becomes exact on $N$.
	This defines a distribution $\mathcal D_\ex$ on $\Gr_S(M)$ that we call the \emph{isodrastic distribution} in analogy to Weinstein's isodrastic distribution for symplectic manifolds.
	Clearly, $\mathcal D_\ex$ is invariant under $\Diff(M,\mu)$.

	Contraction with the volume form followed by restriction to $TN$ provides an isomorphism of line bundles,
	\begin{equation}\label{E:mu}
		\mu_N\colon TM|_N/TN\to\Lambda^{\dim N}T^*N.
	\end{equation}
	Using the canonical isomorphism $|\Lambda|_N=\mathfrak o_N\otimes\Lambda^{\dim N}T^*N$, 
	where $\mathfrak o_N$ denotes the orientation bundle over $N$, we regard this as an isomorphism of line bundles
	\begin{equation}\label{E:mu.Gr}
		\mu_N\colon\mathfrak o_N\to|\Lambda|_N\otimes\Ann(TN).
	\end{equation}
	Hence, every regular covector $\xi\in T^*_{N,\reg}\Gr_S(M):=\Gamma(|\Lambda|_N\otimes\Ann(TN))$ can be written in the form 
	$\xi=f\mu_N$ for a unique $f\in\Gamma(\mathfrak o_N)$ which we denote by $f=\xi/\mu_N$.
	Clearly,
	\[
		\langle\xi,X\rangle=\int_Nfi_X\mu
	\]
	for $X\in T_N\Gr_S(M)=\Gamma(TM|_N/TN)$. 
	Using Stokes' theorem we conclude
	
	\begin{lemma}\label{L:cove}
		A regular covector $\xi\in T^*_{N,\reg}\Gr_S(M)$ vanishes on $\mathcal D_{N,\ex}$ if and only if $f=\xi/\mu_N$ is a parallel section of $\mathfrak o_N$. 
		\footnote{The orientation bundle $\mathfrak o_N$ is a line bundle whose structure group is reduced to ${\rm O}(1)$, and which therefore comes equipped with a canonical flat connection.}
		Hence, the isomorphism of line bundles in \eqref{E:mu.Gr} provides an isomorphism of finite dimensional vector spaces,
		\begin{equation}\label{E:mu.Gr2}
			\mu_N:H^0(N;\mathfrak o_N)\to\Ann_\reg(\mathcal D_{N,\ex}),
		\end{equation}
		where $\Ann_\reg(\mathcal D_{N,\ex}):=\{\xi\in T^*_{N,\reg}\Gr_S(M):\xi|_{\mathcal D_{N,\ex}}=0\}$ denotes the regular annihilator.
		In particular, the isodrastic distribution has finite codimension
		\begin{multline}\label{E:codim.Fcex}
			\codim\mathcal D_{\ex}
			=\dim H^0(S;\mathfrak o_S)
			\\
			=\textrm{number of orientable connected components of $S$.}
		\end{multline}
	\end{lemma}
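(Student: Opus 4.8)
The plan is to prove Lemma~\ref{L:cove} by unwinding the definitions and applying Stokes' theorem, treating each connected component of $N$ separately. First I would recall that a regular covector $\xi\in T^*_{N,\reg}\Gr_S(M)=\Gamma(|\Lambda|_N\otimes\Ann(TN))$ acts on $X\in T_N\Gr_S(M)=\Gamma(TM|_N/TN)$ by $\langle\xi,X\rangle=\int_N f\,i_X\mu$, where $f=\xi/\mu_N\in\Gamma(\mathfrak o_N)$, using the isomorphism \eqref{E:mu.Gr}. Here $i_X\mu$ is an honest $(\dim N)$-form on $N$ twisted by $\mathfrak o_N$ (a density), so the product $f\,i_X\mu$ is an ordinary top-degree form on $N$ and the integral makes sense.

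Next I would show that $\xi$ vanishes on $\mathcal D_{N,\ex}$ precisely when $f$ is parallel. For the ``if'' direction: if $f$ is parallel (locally constant with respect to the canonical flat $\mathrm O(1)$-connection on $\mathfrak o_N$) and $X\in\mathcal D_{N,\ex}$, then $i_X\mu=d\beta$ for some $\beta\in\Omega^{\dim N-1}(N;\mathfrak o_N)$ (twisted by the same orientation bundle so that the equation is consistent); since $df=0$ we get $f\,i_X\mu=f\,d\beta=d(f\beta)$, which is exact, hence $\int_N f\,i_X\mu=0$ by Stokes on the closed manifold $N$. For the ``only if'' direction: suppose $f$ is not parallel, so $df\neq0$ on some component $N_0$; I would produce $X\in\mathcal D_{N,\ex}$ with $\langle\xi,X\rangle\neq0$ by choosing, via the isomorphism \eqref{E:mu}, a vector field $X$ along $N$ with $i_X\mu$ equal to a chosen exact top-form $dg\wedge(\text{something})$ --- more precisely, one picks a top-degree form $\omega_0$ on $N$ that is exact (so $X\in\mathcal D_{N,\ex}$) but with $\int_N f\omega_0\neq0$; such $\omega_0$ exists exactly because $f$ is non-constant on a component, e.g.\ take $\omega_0=d(f\eta)-f\,d\eta$ type constructions, or simply note that the functional $\omega_0\mapsto\int_N f\omega_0$ on the space of exact top-forms is nonzero iff $f$ is not locally constant (its annihilator among all top-forms modulo exact ones is spanned by locally constant functions by de Rham duality $H^{\dim N}(N;\mathfrak o_N)^*\cong H^0(N;\mathfrak o_N)$).

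From the equivalence just established, the identification $\mu_N\colon H^0(N;\mathfrak o_N)\to\Ann_\reg(\mathcal D_{N,\ex})$ in \eqref{E:mu.Gr2} is immediate: $H^0(N;\mathfrak o_N)$ is exactly the space of parallel sections of $\mathfrak o_N$, and \eqref{E:mu.Gr} carries it isomorphically onto the regular annihilator. Finally, for the codimension count \eqref{E:codim.Fcex}, I would observe that $\mathfrak o_N$ restricted to a connected component is trivial iff that component is orientable, contributing a one-dimensional space of parallel sections, and is the nontrivial flat line bundle otherwise, contributing none; since $N\cong S$ as manifolds, $\dim H^0(N;\mathfrak o_N)=\dim H^0(S;\mathfrak o_S)$ equals the number of orientable connected components of $S$. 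The finite-codimensionality of $\mathcal D_\ex$ then follows because $\Ann_\reg(\mathcal D_{N,\ex})$ has this finite dimension at every $N$, and the regular covectors separate points of $T_N\Gr_S(M)/\mathcal D_{N,\ex}$.

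The main obstacle I anticipate is the ``only if'' direction --- making precise the claim that a non-parallel $f$ admits an exact twisted top-form pairing nontrivially against it. The clean way is to invoke twisted de Rham duality on the closed manifold $N$: the pairing $H^0(N;\mathfrak o_N)\times H^{\dim N}(N;\mathfrak o_N)\to\mathbb R$ (note $\mathfrak o_N\otimes\mathfrak o_N$ is canonically trivial, so this pairs untwisted cohomology in complementary degrees) is perfect, whence a section $f\in\Gamma(\mathfrak o_N)$ annihilates all exact top-forms if and only if $df=0$. One must be a little careful that every exact twisted top-form on $N$ arises as $i_X\mu$ for some $X\in\Gamma(TM|_N/TN)$, but this is exactly the surjectivity of the line bundle isomorphism \eqref{E:mu}, so no genuine difficulty remains there.
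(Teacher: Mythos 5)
Your argument is correct and takes essentially the same route as the paper, whose proof consists precisely of the formula $\langle\xi,X\rangle=\int_N f\,i_X\mu$ followed by an appeal to Stokes' theorem (with the twisted Poincar\'e duality between $H^0(N;\mathfrak o_N)$ and $H^{\dim N}(N;\mathbb R)$, or equivalently an integration-by-parts/localization argument, supplying the ``only if'' direction exactly as you indicate). One bookkeeping remark: since $\mathcal D_{N,\ex}$ is defined by exactness of $i_X\mu$ in the ordinary de Rham complex, the primitive $\beta$ is an \emph{untwisted} $(\dim N-1)$-form (it is $f\beta$ that is twisted), and $f\,i_X\mu$ is a density rather than an ordinary top form --- neither point affects your argument.
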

	 
	\begin{lemma}\label{L:Fcex.int}
		The isodrastic distribution $\mathcal D_\ex$ on $\Gr_S(M)$ is integrable.
		Its leaves coincide with the orbits of $\Diff_{c,\ex}(M,\mu)$. 
	\end{lemma}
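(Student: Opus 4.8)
The plan is to prove integrability and the identification of leaves with $\Diff_{c,\ex}(M,\mu)$-orbits simultaneously, by showing that the orbit of $\Diff_{c,\ex}(M,\mu)$ through any $N\in\Gr_S(M)$ is exactly the set of submanifolds reachable by a path tangent to $\mathcal D_\ex$. The inclusion ``$\Diff_{c,\ex}(M,\mu)$-orbits are contained in leaves'' is the easy direction: the infinitesimal action of $X\in\mathfrak X_{c,\ex}(M,\mu)$ at $N$ is the normal component $X|_N\bmod TN\in\Gamma(TM|_N/TN)$, and since $i_X\mu=d\beta$ globally (with $\beta$ compactly supported), $i_X\mu$ restricts to an exact form $d(\iota_N^*\beta)$ on $N$; hence the infinitesimal orbit directions lie in $\mathcal D_{N,\ex}$, so each orbit is an integral submanifold of $\mathcal D_\ex$, and in particular $\mathcal D_\ex$ contains the (involutive) distribution spanned by the infinitesimal action.

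First I would establish the reverse: given a smooth path $N_t$ in $\Gr_S(M)$ with $\dot N_t\in\mathcal D_{N_t,\ex}$, construct an isotopy $\Phi_t\in\Diff_{c,\ex}(M,\mu)$ with $\Phi_t(N_0)=N_t$. For fixed $t$, $\dot N_t$ is a section of $TM|_{N_t}/TN_t$ whose contraction with $\mu$ is exact on $N_t$; using the line bundle isomorphism \eqref{E:mu.Gr} one writes $i_{\dot N_t}\mu|_{N_t}=d\sigma_t$ for some $\sigma_t\in\Omega^{\dim N-1}(N_t)$ (this requires knowing the relevant cohomology class vanishes, which is precisely the defining condition of $\mathcal D_{N_t,\ex}$, together with the fact that for a top-minus-one form on the codimension-one piece the only obstruction lives in $H^0(N_t;\mathfrak o_{N_t})$ as in Lemma~\ref{L:cove}). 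One then extends the normal field $\dot N_t$ to a vector field $X_t$ on a tubular neighborhood of $N_t$ and corrects it, using the relative Poincar\'e lemma exactly as in the proof of Lemma~\ref{L:zetaT*M} and in \cite[Proposition~2]{HV04}, to a divergence-free vector field whose contraction with $\mu$ is a compactly supported exact form near $N_t$; finally cut off to land in $\mathfrak X_{c,\ex}(M,\mu)$ without changing the behavior along $N_t$. Integrating the time-dependent vector field $X_t$ gives the desired isotopy, so $N_t$ stays in the orbit of $N_0$.

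From these two inclusions it follows that the orbits of $\Diff_{c,\ex}(M,\mu)$ are precisely the maximal connected integral submanifolds of $\mathcal D_\ex$, hence $\mathcal D_\ex$ is integrable with these orbits as leaves. Alternatively, one can argue integrability first and directly: $\mathcal D_\ex$ has finite codimension by Lemma~\ref{L:cove}, and its annihilator is spanned by the closed $1$-forms $\xi\mapsto\langle\cdot\,,\xi\rangle$ with $\xi=f\mu_N$, $f$ parallel; since $N\mapsto\int_N f\,i_\bullet\mu$ is, up to the parallel-section ambiguity, locally the differential of the ``signed volume swept'' functional, the annihilator is locally spanned by exact $1$-forms on $\Gr_S(M)$, which gives involutivity by the Frobenius criterion in the convenient-calculus setting. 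I would prefer the orbit argument as it yields the leaf identification at no extra cost.

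The main obstacle is the analytic step of turning a pointwise-in-$t$ construction of $X_t$ into a genuinely smooth (in $t$) family landing in $\mathfrak X_{c,\ex}(M,\mu)$, with all cutoffs and tubular-neighborhood choices depending smoothly on $t$; this is the same technical point that underlies \cite[Proposition~2]{HV04}, and I would invoke that reference and the relative Poincar\'e lemma \cite[\S43.10]{KM97} rather than redo the estimates. A secondary subtlety is bookkeeping the orientation bundle $\mathfrak o_N$: the primitive $\sigma_t$ of $i_{\dot N_t}\mu|_{N_t}$ exists because the obstruction in $H^{\dim N}(N_t;\mathbb R)$ along each connected component is killed exactly when $\dot N_t\in\mathcal D_{N_t,\ex}$, and one must check this is unaffected by non-orientability of components of $S$, which is handled by the discussion around \eqref{E:mu.Gr} and \eqref{E:codim.Fcex}.
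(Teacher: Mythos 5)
Your two inclusions (orbits are tangent to $\mathcal D_\ex$; paths tangent to $\mathcal D_\ex$ stay inside an orbit) do not by themselves yield the conclusion you draw from them. The final step, ``hence $\mathcal D_\ex$ is integrable with the orbits as leaves,'' implicitly invokes an orbit/accessibility theorem of Stefan--Sussmann type, and no such theorem is available in this Fr\'echet setting. Even granting your path-tracking construction, you have not shown that an orbit of $\Diff_{c,\ex}(M,\mu)$ is a submanifold of $\Gr_S(M)$, let alone that the orbits fit together into a foliation of finite codimension with local plaque charts --- and it is exactly this local structure that the lemma is used for later (the leaves $L$, their regular cotangent bundles, the codimension count). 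Your fallback argument does not close the gap either: knowing that $\Ann_\reg(\mathcal D_\ex)$ is spanned by closed (even exact) $1$-forms gives only involutivity, and involutivity does not imply integrability in infinite dimensions. What is needed, and what the paper actually supplies, is a local normal form: using a tubular neighborhood and Moser's theorem \cite{M65} one may take $M=N\times\mathbb R$, $\mu=dt\wedge p^*\nu$, parametrize nearby submanifolds as graphs $N_f$, and observe that the bounded linear functional $V(N_f)=\int_Nf\nu$ satisfies $\ker dV=\mathcal D_\ex$ on the chart; its level sets are then honest local leaves, which settles integrability. Your ``signed volume swept'' remark is essentially this functional, but you discard it in favor of the orbit argument, which is where the proof breaks down.

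Concerning the transitivity half: your idea of realizing an isodrastic direction by a field in $\mathfrak X_{c,\ex}(M,\mu)$ is sound (and simpler than you make it --- one does not need the $1$-jet matching of Lemma~\ref{L:zetaT*M}; extending a primitive $\sigma$ of $\iota_N^*i_X\mu$ to a compactly supported form $\beta$ and setting $i_Y\mu=d\beta$ already matches the normal component, by the isomorphism \eqref{E:mu.Gr}). But your route requires a time-dependent family $X_t\in\mathfrak X_{c,\ex}(M,\mu)$ depending smoothly on $t$ along an arbitrary tangent path, which you defer to \cite{HV04}; the paper avoids this entirely by working in the local model and exhibiting a single explicit exact divergence-free field ($i_Z\mu=d((t^*\lambda)(p^*\alpha))$ with $f\nu=d\alpha$ and a cutoff $\lambda$) whose time-one flow carries $N$ to any graph $N_f$ in the same $V$-level set. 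I recommend restructuring your proof along these lines: prove integrability first via the local functional $V$, then prove local transitivity of $\Diff_{c,\ex}(M,\mu)$ on its level sets; the identification of leaves with orbits then follows from connectedness of the level sets, without any appeal to an orbit theorem.
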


	\begin{proof}
		W.l.o.g.\ we may assume $S$ to be connected and orientable.
		Suppose $N\in\Gr_S(M)$.
		Using a tubular neighborhood of $N$ and \cite[Section~4]{M65},	
		we may assume $M=N\times\mathbb R$ and $\mu=dt\wedge p^*\nu$ where $p\colon M\to N$ and $t\colon M\to\mathbb R$ denote the natural projections, and $\nu$ is a volume form on $N$.
		For $f\in C^\infty(N;\mathbb R)$ we let $N_f:=\{(x,f(x)):x\in N\}$ denote its graph in $M$.
		This provides a standard chart for the smooth structure on $\Gr_S(M)$ centered at $N$.
		We let $\mathcal U$ denote the open neighborhood of $N$ in $\Gr_S(M)$ corresponding to $C^\infty(N,\mathbb R)$ via this chart.
		Consider
		\begin{equation}\label{E:V.def}
			V\colon\mathcal U\to\mathbb R,\qquad
			V(N_f):=\int_Nf\nu.
		\end{equation}
		As the integral is a bounded linear functional, the level sets of $V$ provide a codimension one foliation on $\mathcal U$.
		Clearly,
		\[
			dV(X)=\int_N\lambda\nu=\int_{N_f}i_X\mu,
		\]
		where $X\in T_{N_f}\Gr_S(M)=\Gamma(TM|_{N_f}/TN_f)$ is identified with $\lambda\in C^\infty(N,\mathbb R)$, the pull back of $dt(X)\in C^\infty(N_f,\mathbb R)$ to $N$ via $f$.
		We conclude that the level sets of $V$ are leaves of $\mathcal D_\ex|_{\mathcal U}$.
		Note here, that the level sets of $V$ are connected, by convexity.

		Clearly, the orbits of $\Diff_{c,\ex}(M,\mu)$ are tangential to $\mathcal D_\ex$.
		It remains to show that $\Diff_{c,\ex}(M,\mu)$ acts locally transitively on the leaves of $\mathcal D_\ex$.
		To this end, suppose $N_f$ and $N$ are in the same leaf of $\mathcal D_\ex|_{\mathcal U}$.
		As the latter foliation is given by the level sets of $V$ in \eqref{E:V.def}, we have $\int_Nf\nu=0$.
		Note that $g(x,t):=(x,t+f(x))$ defines a volume preserving diffeomorphism of $M=N\times\mathbb R$ mapping $N$ to $N_f$.
		This diffeomorphism is the flow at time one of the vector field $Y(x,t)=f(x)\partial_t$.
		This vector field is divergence-free, for we have $i_Y\mu=p^*(f\nu)$ and thus $di_Y\mu=0$.
		As $\int_Nf\nu=0$, there exists $\alpha\in\Omega^*(N)$ such that $f\nu=d\alpha$.
		Choose $\lambda\in C^\infty_c(\mathbb R,\mathbb R)$ such that $\lambda=1$ on the interval $[-a,a]$ where $a:=\max_{x\in N}|f(x)|$.
		Define $Z\in\mathfrak X_{c,\ex}(M)$ by $i_Z\mu:=d((t^*\lambda)(p^*\alpha))$ and let $h\in\Diff_{c,\ex}(M)$ denote the flow at time one of $Z$.
		By construction, $Y$ and $Z$ coincide on $N\times[-a,a]$.
		Hence, $h(N)=g(N)=N_f$.
	\end{proof}
	
	\begin{remark}\label{R:isodrast.cc}
		The isodrastic distribution constrains the connected components of $N\in\Gr_S(M)$ independently.
		To formulate this more precisely, let $S_1,\dotsc,S_k$ denote the connected components of $S$.
		Moreover, let $U$ denote the open subset in $\prod_{i=1}^k\Gr_{S_i}(M)$ consisting of all tuples $(N_1,\dotsc,N_k)$ such that the submanifolds $N_i$ are mutually disjoint.
		The map $U\to\Gr_S(M)$ provided by the (disjoint) union is a principal covering with structure group $\{\sigma\in\mathfrak S_k:\text{$S_i\cong S_{\sigma(i)}$ for $i=1,\dotsc,k$}\}$.
		This covering map intertwines the product of the isodrastic distributions restricted to $U\subseteq\prod_{i=1}^k\Gr_{S_i}(M)$ with the isodrastic distribution on $\Gr_S(M)$.
	\end{remark}

	The pull back of the isodrastic distribution on $\Gr_S(M)$ is a distribution on $\Emb(S,M)$ which is invariant under $\Diff(M,\mu)$ and $\Diff(S)$.
	We also denote this distribution by $\mathcal D_\ex$ and we call it \emph{isodrastic distribution} too.
		
	\begin{lemma}\label{L:Fcex.int.Emb}
		The isodrastic distribution $\mathcal D_\ex$ on $\Emb(S,M)$ is integrable with (finite) codimension as in \eqref{E:codim.Fcex}.
		Its leaves coincide with the orbits of $\Diff_{c,\ex}(M,\mu)$.
		For $\varphi\in\Emb(S,M)$ the isomorphism of line bundles provided by $\mu$,
		\begin{equation}\label{E:mu.iso}
			\mu_\varphi\colon\mathfrak o_S\to|\Lambda|_S\otimes\varphi^*\Ann(T\varphi(S))\subseteq|\Lambda|_S\otimes\varphi^*T^*M,
		\end{equation}
		cf.~\eqref{E:mu.Gr}, induces an isomorphism of finite dimensional vector spaces,
		\begin{equation}\label{E:ann}
			\mu_\varphi\colon H^0(S;\mathfrak o_S)\to\Ann_\reg(\mathcal D_{\varphi,\ex}),
		\end{equation}
		where $\Ann_\reg(\mathcal D_{\varphi,\ex})=\{\alpha\in T^*_{\varphi,\reg}\Emb(S,M):\alpha|_{\mathcal D_{\varphi,\ex}}=0\}$.
	\end{lemma}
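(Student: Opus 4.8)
The plan is to deduce everything in Lemma~\ref{L:Fcex.int.Emb} from the corresponding statements on $\Gr_S(M)$ proved in Lemmas~\ref{L:cove} and~\ref{L:Fcex.int}, by pulling back along the principal $\Diff(S)$ bundle $p\colon\Emb(S,M)\to\Gr_S(M)$. Recall that by construction $\mathcal D_\ex$ on $\Emb(S,M)$ is $Tp^{-1}(\mathcal D_\ex)$, i.e.\ the preimage of the isodrastic distribution downstairs; since $\ker Tp$ is the (integrable, finite-\emph{co}dimension-complement) vertical distribution tangent to the $\Diff(S)$ orbits, the preimage of an integrable distribution under a submersion is again integrable, and its leaves are exactly the $p$-preimages (saturations) of the leaves downstairs. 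So integrability of $\mathcal D_\ex$ on $\Emb(S,M)$, and the equality $\codim_{\Emb(S,M)}\mathcal D_\ex=\codim_{\Gr_S(M)}\mathcal D_\ex$, which is the number in \eqref{E:codim.Fcex}, are immediate. For the statement about leaves being $\Diff_{c,\ex}(M,\mu)$ orbits: the $\Diff_{c,\ex}(M,\mu)$ action on $\Emb(S,M)$ covers its action on $\Gr_S(M)$ and commutes with $\Diff(S)$, so a $\Diff_{c,\ex}(M,\mu)$ orbit in $\Emb(S,M)$ maps onto a $\Diff_{c,\ex}(M,\mu)$ orbit in $\Gr_S(M)$, which by Lemma~\ref{L:Fcex.int} is a leaf of $\mathcal D_\ex$ there; one then checks that the $\Diff_{c,\ex}(M,\mu)$ action together with the $\Diff(S)$ action is transitive on the fibers of $p$ over such a leaf — but actually $\Diff(S)$ alone is transitive on each fiber of $p$, so an orbit of the combined action is the full saturation, i.e.\ a leaf of $\mathcal D_\ex$ upstairs. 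A small point to be careful about: a single $\Diff_{c,\ex}(M,\mu)$ orbit upstairs need not meet every $\Diff(S)$ fiber over a downstairs leaf, but one argues local transitivity on the leaf directly, reusing the explicit construction in the proof of Lemma~\ref{L:Fcex.int} — the vector field $Y(x,t)=f(x)\partial_t$ there is $\Diff(S)$-equivariantly defined and its flow moves $\varphi$ within its isodrastic leaf, and any tangent direction in $\mathcal D_{\varphi,\ex}$ is realized this way up to a vertical (reparametrization) correction, which is itself generated by a compactly supported exact divergence-free vector field tangent to $N$ (push forward a Hamiltonian-type vector field on $N$ with respect to $\nu$, extended off $N$ as in Lemma~\ref{L:Fcex.int}).

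For the identification of the regular annihilator, I would argue as follows. The bundle isomorphism $\mu_\varphi$ in \eqref{E:mu.iso} is simply the pullback under $\varphi$ of the isomorphism $\mu_N$ in \eqref{E:mu.Gr} for $N=\varphi(S)$, composed with the canonical identifications $\varphi^*\mathfrak o_{\varphi(S)}\cong\mathfrak o_S$, $\varphi^*|\Lambda|_{\varphi(S)}\cong|\Lambda|_S$, and $\varphi^*\Ann(T\varphi(S))\cong$ the kernel of $\varphi^*T^*M\to T^*S$. Likewise $T^*_{\varphi,\reg}\Emb(S,M)=\Gamma(|\Lambda|_S\otimes\varphi^*\Ann(T\varphi(S)))$ is canonically identified with $T^*_{\varphi(S),\reg}\Gr_S(M)=\Gamma(|\Lambda|_{\varphi(S)}\otimes\Ann(T\varphi(S)))$ via pullback along the diffeomorphism $\varphi\colon S\to\varphi(S)$, and under this identification the pairing of a regular covector with a tangent vector in $\mathcal D_{\varphi,\ex}$ corresponds to the pairing downstairs, because $Tp$ sends $\mathcal D_{\varphi,\ex}$ onto $\mathcal D_{\varphi(S),\ex}$ and the tautological pairing is $p$-related. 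Hence $\Ann_\reg(\mathcal D_{\varphi,\ex})$ is carried isomorphically onto $\Ann_\reg(\mathcal D_{\varphi(S),\ex})$, and composing with \eqref{E:mu.Gr2} from Lemma~\ref{L:cove} and with the isomorphism $H^0(S;\mathfrak o_S)\cong H^0(\varphi(S);\mathfrak o_{\varphi(S)})$ induced by $\varphi$ gives exactly the claimed isomorphism \eqref{E:ann}. In particular the target is finite dimensional of dimension equal to the number of orientable connected components of $S$, consistent with the codimension count.

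I do not expect a serious obstacle here; the lemma is essentially a bookkeeping transfer of Lemmas~\ref{L:cove} and~\ref{L:Fcex.int} through the $\Diff(S)$ bundle $p$. The one place requiring genuine (if routine) care is the claim that the leaves upstairs coincide with the $\Diff_{c,\ex}(M,\mu)$ orbits rather than merely with their saturations: one must verify that $\Diff_{c,\ex}(M,\mu)$ acts locally transitively on each leaf of $\mathcal D_\ex\subseteq T\Emb(S,M)$, for which the cleanest route is to note that in the local model $M=N\times\mathbb R$, $\mu=dt\wedge p^*\nu$ used in the proof of Lemma~\ref{L:Fcex.int}, the flows constructed there (the vertical-direction flow of $Y=f(x)\partial_t$ cut off to $Z\in\mathfrak X_{c,\ex}(M,\mu)$, together with flows of compactly supported exact divergence-free vector fields tangent to $N$ realizing arbitrary reparametrizations) already exhaust a full neighborhood of $\varphi$ inside its isodrastic leaf. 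This is the same argument as for \cite[Proposition~2]{HV04} and for Lemma~\ref{L:Fcex.int}, now carried out one level up; everything else is formal.
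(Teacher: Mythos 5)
Your overall route is the paper's route: the paper proves this lemma in one line, deducing it from Lemmas~\ref{L:cove} and~\ref{L:Fcex.int} together with the fact that every diffeomorphism in $\Diff(N)_0$ extends to an element of $\Diff_{c,\ex}(M,\mu)$, which is exactly your ``transfer through the bundle $\Emb(S,M)\to\Gr_S(M)$ plus realization of the fiber directions'' scheme. However, the one place where your write-up supplies its own justification for the genuinely new ingredient is not correct as stated. The vertical corrections you need are \emph{arbitrary} elements of $\mathfrak X(S)$ (equivalently, arbitrary vector fields on $N=\varphi(S)$), and these are in general not divergence free with respect to $\nu$; for $N$ a circle the $\nu$-divergence-free fields are only the constant-speed rotations. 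So ``push forward a Hamiltonian-type vector field on $N$ with respect to $\nu$, extended off $N$'' does not produce general reparametrizations, and taken literally this step fails. What is true, and what the paper invokes, is that an arbitrary $W\in\mathfrak X(N)$ is the restriction of some $Y\in\mathfrak X_{c,\ex}(M,\mu)$: in the model $M=N\times\mathbb R$, $\mu=dt\wedge p^*\nu$, define $Y$ by $i_Y\mu=d\bigl(-\lambda(t)\,t\,p^*(i_W\nu)\bigr)$ with a cutoff $\lambda$ equal to $1$ near $t=0$; then $Y$ is exact, compactly supported, and $Y|_{N\times\{0\}}=W$, the point being that the extension may (and in general must) acquire a compensating normal component off $N$, it is not a pushforward of anything intrinsic to $(N,\nu)$. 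With this replacement (or simply citing the extension fact for $\Diff(N)_0$, as the paper does), your argument for ``leaves $=$ orbits'' goes through.

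A second, smaller slip: you write $T^*_{\varphi,\reg}\Emb(S,M)=\Gamma(|\Lambda|_S\otimes\varphi^*\Ann(T\varphi(S)))$, but by \eqref{treg} the regular cotangent space upstairs is all of $\Gamma(|\Lambda|_S\otimes\varphi^*T^*M)$. The identification you want is available only after observing that any $\alpha\in\Ann_\reg(\mathcal D_{\varphi,\ex})$ must in particular vanish on the vertical distribution $\ker Tp\subseteq\mathcal D_{\varphi,\ex}$, which forces $\alpha$ to take values in $|\Lambda|_S\otimes\varphi^*\Ann(T\varphi(S))$; after that step, your reduction of \eqref{E:ann} to \eqref{E:mu.Gr2} via the diffeomorphism $\varphi\colon S\to\varphi(S)$ is exactly the intended bookkeeping. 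Both issues are fixable, but the first concerns precisely the only ingredient of the lemma not already contained in Lemmas~\ref{L:cove} and~\ref{L:Fcex.int}, so it should be repaired rather than glossed.
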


	\begin{proof}
		This follows from Lemmas~\ref{L:cove} and \ref{L:Fcex.int} using the fact that every diffeomorphism in $\Diff(N)_0$ can be extended to a diffeomorphism in $\Diff_{c,\ex}(M,\mu)$.
	\end{proof}
	
	The foliations on $\Gr_S(M)$ and $\Emb(S,M)$ integrating the isodrastic distributions will be referred to as \emph{isodrastic foliations.} 
	Its leaves, i.e., the $\Diff_{c,\ex}(M,\mu)$ orbits, are called \emph{isodrastic leaves.}

	\begin{remark}\label{R:open}
		If $M$ is open, then there exists a form $\vartheta$ such that $\mu=d\vartheta$.
		Every such form provides a map
		\[
			F_\vartheta\colon\Emb(S,M)\to H^{\dim M-1}(S),\qquad F_\vartheta(\varphi):=[\varphi^*\vartheta].
		\]
		One readily checks $(dF_\vartheta)(X)=[\varphi^*i_X\mu]$ for $X\in T_\varphi\Emb(S,M)$.
		Hence, $F_\vartheta$ is a submersion and $\ker(dF_\vartheta)=\mathcal D_\ex$.
		Therefore, the connected components of $F_\vartheta$-level sets are isodrastic leaves.
		In particular, each connected component of $F_\vartheta^{-1}(0)$, the set of $\vartheta$-exact embeddings, is an isodrastic leaf.
		By equivariance, $F_\vartheta$ provides a section of the associated flat vector bundle
		\[
			\Emb(S,M)\times_{\Diff(S)} H^{\dim M-1}(S)\to\Gr_S(M),
		\]
		a homologically decorated nonlinear Grassmannian.
		Each connected component of the preimage of a horizontal leaf is an isodrastic leaf.
		In particular, the connected components of the preimage of the zero section, i.e., the set of all $\vartheta$-exact submanifolds $N\in\Gr_S(M)$, are isodrastic leaves.
	\end{remark}

\subsection{Regular cotangent bundle of isodrastic leaves}\label{SS:T*regL}

	Let $L$ be the preimage under $\Emb(S,M)\to\Gr_S(M)$ of an isodrastic leaf in $\Gr_S(M)$.
	Note that $L$ will not be connected, in general.
	Its connected components are isodrastic leaves in $\Emb(S,M)$.
	We have commuting actions of $\Diff_{c,\ex}(M,\mu)$ and $\Diff(S)$ on $L$.

	For $\varphi\in L$ we denote the (finite dimensional) annihilator of $T_\varphi L=\mathcal D_{\varphi,\ex}$ by
	\[
		\Ann_\reg(T_\varphi L):=\bigl\{\alpha\in T^*_{\varphi,\reg}\Emb(S,M):\alpha|_{T_\varphi L}=0\bigr\}=\Ann_\reg(\mathcal D_{\varphi,\ex}),
	\]
	and define
	\[
		T_{\varphi,\reg}^*L:=T_{\varphi,\reg}^*\Emb(S,M)/\Ann_\reg(T_\varphi L).
	\]
	The annihilators provide a vector subbundle $\Ann_\reg(TL):=\bigsqcup_{\varphi\in L}\Ann_\reg(T_\varphi L)$ of finite rank in $(T^*_\reg\Emb(S,M))|_L$.
	We define the regular cotangent bundle of $L$ by
	\begin{equation}\label{E:T*regL}
		T_\reg^*L:=\bigl(T^*_\reg\Emb(S,M)\bigr)\big|_L/\Ann_\reg(TL).
	\end{equation}

	From Lemma~\ref{L:Fcex.int.Emb} we obtain
	\[
		\codim L=\rank\Ann_\reg(TL)=\dim H^0(S;\mathfrak o_S)
	\]
	as well as the more explicit description
	\begin{equation}\label{explit}
		T_\reg^*L=\bigl\{(\varphi,[\alpha]):\varphi\in L,\alpha\in\Gamma(|\Lambda|_S\otimes\varphi^*T^*M)\bigr\}
	\end{equation}
	where $[\alpha_1]=[\alpha_2]$ iff $\alpha_2-\alpha_1=f\mu_\varphi$ with $f\in\Gamma_\locconst(\mathfrak o_S)=H^0(S;\mathfrak o_S)$.

	Some natural maps will be denoted as indicated in the following diagram:
	\begin{equation}\label{E:diag}
		\vcenter{\xymatrix{
			T^*_\reg L\ar@/_2ex/[dr]_-p&\bigl(T^*_\reg\Emb(S,M)\bigr)\big|_L\ar[l]_-q\ar@{^(->}[r]^-\iota\ar[d]^p&T^*_\reg\Emb(S,M)\ar[d]^p
			\\
			&L\ar@{^(->}[r]&\Emb(S,M)
		}}
	\end{equation}
	We equip $T^*_\reg L$ with the unique structure of a Fr\'echet manifold such that $q$ becomes a smooth vector bundle (with finite dimensional fibers).
	The actions of $\Diff_{c,\ex}(M,\mu)$ and $\Diff(S)$ on $T^*_\reg\Emb(S,M)$ restrict and descend naturally 
	to commuting smooth actions on each manifold in the diagram such that $q$, $\iota$, and $p$ are all equivariant.
	The canonical 1-form on $T^*_\reg L$ can be characterized by
	\begin{equation}\label{E:thetaL}
		q^*\theta^{T^*_\reg L}=\iota^*\theta^{T^*_\reg\Emb(S,M)}.
	\end{equation}
	Clearly, $\theta^{T^*_\reg L}$ is invariant under both group actions.
	Moreover, $\omega^{T^*_\reg L}:=d\theta^{T^*_\reg L}$ is a closed 2-form on $T^*_\reg L$ which is also invariant under these group actions.
	Equivalently this 2-form can be characterized by
	\begin{equation}\label{E:omegaL}
		q^*\omega^{T^*_\reg L}=\iota^*\omega^{T^*_\reg\Emb(S,M)}.
	\end{equation}

	\begin{lemma}
		The closed 2-form $\omega^{T^*_\reg L}$ is a weak symplectic form on $T^*_\reg L$. 
	\end{lemma}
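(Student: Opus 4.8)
The plan is to show that $\omega^{T^*_\reg L}$ is weakly nondegenerate, i.e.\ that the associated map $T_\Psi(T^*_\reg L)\to T^*_\Psi(T^*_\reg L)$ induced by $\omega^{T^*_\reg L}_\Psi$ is injective for every $\Psi=(\varphi,[\alpha])\in T^*_\reg L$. The strategy is to transfer the question to $\mathcal E$ and the known weak symplectic form there, using the characterization \eqref{E:omegaL}, $q^*\omega^{T^*_\reg L}=\iota^*\omega^{T^*_\reg\Emb(S,M)}$, together with the fact that $q$ is a surjective submersion whose kernel is the finite-rank bundle $\Ann_\reg(TL)$.

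First I would analyze the kernel of $\iota^*\omega^{T^*_\reg\Emb(S,M)}$ along $L$. Since $\omega^{T^*_\reg\Emb(S,M)}$ is a weak symplectic form (the cotangent symplectic form of a Fréchet manifold), its pullback to the splitting submanifold $(T^*_\reg\Emb(S,M))|_L$ has a kernel which I expect to be exactly the $\omega^{T^*_\reg\Emb(S,M)}$-symplectic orthogonal of $T((T^*_\reg\Emb(S,M))|_L)$. This submanifold is the preimage of $L$ under the cotangent projection $p$; since $L\subseteq\Emb(S,M)$ is cut out (locally, via Remark~\ref{R:open} or Lemma~\ref{L:Fcex.int.Emb}) as a level set of a submersion into $H^0(S;\mathfrak o_S)^*$ (dually, $H^{\dim M-1}(S)$), a standard cotangent-bundle computation shows that the symplectic orthogonal of $T((T^*_\reg\Emb(S,M))|_L)$ at a point over $\varphi$ is the \emph{vertical} subspace spanned by $\Ann_\reg(T_\varphi L)=\Ann_\reg(\mathcal D_{\varphi,\ex})$, regarded as cotangent directions. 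In other words, the null distribution of $\iota^*\omega^{T^*_\reg\Emb(S,M)}$ is precisely $\ker(Tq)$. Granting this, \eqref{E:omegaL} forces $\omega^{T^*_\reg L}_\Psi(Tq\,u,Tq\,v)=0$ for all $v$ to imply $u\in\ker(Tq)$, which is exactly weak nondegeneracy of $\omega^{T^*_\reg L}$ after pushing down along $q$.

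The key steps, in order, are: (i) record that $L$ is a splitting Fréchet submanifold of $\Emb(S,M)$ of finite codimension $\dim H^0(S;\mathfrak o_S)$, with conormal bundle $\Ann_\reg(TL)$ (Lemma~\ref{L:Fcex.int.Emb}); (ii) compute, in a local cotangent chart adapted to the splitting $T\Emb(S,M)|_L=TL\oplus(\text{complement})$, that the cotangent symplectic form of $\Emb(S,M)$ restricted to $p^{-1}(L)=(T^*_\reg\Emb(S,M))|_L$ has null space at each point equal to the finite-dimensional vertical subspace corresponding to conormal covectors to $L$; (iii) identify this null space with $\ker(Tq)$ and conclude, via \eqref{E:omegaL} and surjectivity of $Tq$, that $\omega^{T^*_\reg L}$ is weakly nondegenerate. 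The main obstacle I anticipate is step (ii): in the infinite-dimensional weak symplectic setting one must argue carefully that the ``symplectic orthogonal'' computation behaves as in finite dimensions — here this works because the defect is finite-dimensional (the codimension of $L$ is finite), so one can split off a finite-dimensional factor and apply the classical coisotropic-reduction picture to a genuine (strong-on-the-finite-part) pairing. Once that finite/infinite splitting is set up, the remaining verification that no nonzero horizontal-or-infinite-dimensional direction lies in the null space reduces to the weak nondegeneracy of $\omega^{T^*_\reg\Emb(S,M)}$ itself, which is standard.
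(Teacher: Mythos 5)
Your plan is correct and follows essentially the same route as the paper: both arguments reduce weak nondegeneracy of $\omega^{T^*_\reg L}$ to the identification $\ker\bigl(\iota^*\omega^{T^*_\reg\Emb(S,M)}\bigr)=\ker(Tq)$, exploiting the finite codimension of $L$ and the weak nondegeneracy of the cotangent symplectic form upstairs, and then push down along the surjective $Tq$ using \eqref{E:omegaL}. The only difference is cosmetic: where you compute the null space in an adapted cotangent chart (your step (ii), which does go through since regular covectors separate tangent vectors), the paper gets the same equality more abstractly, from the nondegenerate pairing \eqref{E:pairing} between $\ker\bigl(\iota^*\omega^{T^*_\reg\Emb(S,M)}\bigr)$ and the finite-dimensional normal space $(T_\varphi\Emb(S,M))|_L/T_\varphi L$ together with a rank count against $\Ann_\reg(TL)$.
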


	\begin{proof}
		Since $\omega^{T^*_\reg\Emb(S,M)}$ is weakly nondegenerate it induces, together with $Tp$, a nondegenerate pairing of finite dimensional vector spaces
		\begin{equation}\label{E:pairing}
			\ker\bigl(\iota^*\omega_{(\varphi,\alpha)}^{T^*_\reg\Emb(S,M)}\bigr)\times\frac{(T_\varphi\Emb(S,M))|_L}{T_\varphi L}\to\mathbb R.
		\end{equation}
		Clearly, $\ker(Tq)\subseteq\ker\bigl(\iota^*\omega^{T^*_\reg\Emb(S,M)}\bigr)$ in view of \eqref{E:omegaL}.
		Using the nondegeneracy of the pairing \eqref{E:pairing} we conclude
		\begin{equation}\label{E:kerTq}
			\ker(Tq)=\ker\bigl(\iota^*\omega^{T^*_\reg\Emb(S,M)}\bigr)
		\end{equation}
		as both vector bundles have the same finite dimensional rank.
		Hence, $\omega^{T^*_\reg L}$ is weakly nondegenerate.
	\end{proof}

\subsection{Isovolume foliation}\label{SS:isovol}

	We obtain a variation of the isodrastic distribution on $\Gr_S(M)$ by considering all tangent vectors $X\in T_N\Gr_S(M)=\Gamma(TM|_N/TN)$ 
	such that the cohomology class $[i_X\mu]\in H^{\dim N}(N;\mathbb R)$ is in the image of the natural homomorphism
	\begin{equation}\label{E:iotaN*}
		\iota_N^*\colon H_c^{\dim N}(M;\mathbb R)\to H^{\dim N}(N;\mathbb R).
	\end{equation}
	In view of Proposition~\ref{P:below} below, we will call this the \emph{isovolume distribution} and we will denote it by $\mathcal D_\mu$.
	Clearly, this distribution is invariant under $\Diff(M,\mu)$.
	Obviously, the codimension of $\mathcal D_\mu$ is at most the codimension of $\mathcal D_\ex$.
	In general, the codimension of $\mathcal D_\mu$ will depend on the connected component of $\Gr_S(M)$. 
	More precisely we have the following analogue of Lemma~\ref{L:Fcex.int}:
	
	\begin{lemma}\label{L:cmu}
		The distribution $\mathcal D_\mu$ on $\Gr_S(M)$ is integrable.
		Its leaves coincide with the orbits of $\Diff_{c}(M,\mu)_0$. 
		The (finite) codimension at $N\in\Gr_S(M)$ is
		\begin{equation}\label{E:codim.Fcmu}
			\codim_N\mathcal D_\mu=\dim\coker\bigl(H_c^{\dim N}(M;\mathbb R)\to H^{\dim N}(N;\mathbb R)\bigr).
		\end{equation}
	\end{lemma}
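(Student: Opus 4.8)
The plan is to imitate the proof of Lemma~\ref{L:Fcex.int}, replacing the linear functional $V$ (integration against $\nu$) by the finitely many cohomological functionals that cut out $\mathcal D_\mu$. First I would reduce to the case $S$ connected, invoking Remark~\ref{R:isodrast.cc} to treat the components independently (the isovolume constraint, like the isodrastic one, sees each component of $N$ separately). Fix $N\in\Gr_S(M)$. Using a tubular neighborhood and \cite[Section~4]{M65} I may assume $M$ contains $N\times(-\epsilon,\epsilon)$ as a neighborhood of $N$ with $\mu=dt\wedge p^*\nu$ there; graphs $N_f$ of functions $f\in C^\infty(N,\mathbb R)$ furnish a standard chart $\mathcal U$ for $\Gr_S(M)$ centered at $N$. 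As in the earlier proof, for $X\in T_{N_f}\Gr_S(M)$ identified with $\lambda\in C^\infty(N,\mathbb R)$ one has $[i_X\mu]=[\lambda\nu]\in H^{\dim N}(N;\mathbb R)$, so the isovolume distribution on $\mathcal U$ is the kernel of the composite $T\mathcal U\to H^{\dim N}(N;\mathbb R)\to\coker(\iota_N^*)$, $N_f\mapsto$ (derivative of) $[f\nu]$. Since $\coker(\iota_N^*)$ is finite dimensional, this is a finite-codimensional distribution cut out by an (affine-)linear, hence integrable, submersion onto $\coker(\iota_N^*)$; its leaves in $\mathcal U$ are the connected preimages of points, which establishes integrability and the codimension formula \eqref{E:codim.Fcmu} locally. (I should note that $\coker(H^{\dim N}_c(M)\to H^{\dim N}(N))$ is finite dimensional because $N$ is compact; for $M$ open one may also package this via the homologically decorated Grassmannian as in Remark~\ref{R:open}, the image of $\iota_N^*$ being exactly the classes realized by closed forms extending to compactly supported forms on $M$.)

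It remains to identify these leaves with the $\Diff_c(M,\mu)_0$-orbits. That the orbits are tangent to $\mathcal D_\mu$ is clear: for $X\in\mathfrak X_c(M,\mu)$ the form $i_X\mu$ is closed with compact support, so its restriction to $N$ lies in the image of $\iota_N^*$. For local transitivity, suppose $N_f$ lies in the same $\mathcal D_\mu|_{\mathcal U}$-leaf as $N$; then $[f\nu]\in H^{\dim N}(N;\mathbb R)$ lies in the image of $\iota_N^*$, i.e.\ there is a compactly supported closed form $\Theta\in\Omega^{\dim N}_c(M)$ with $\iota_N^*\Theta=f\nu+d\beta$ for some $\beta\in\Omega^{\dim N-1}(N)$. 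As in Lemma~\ref{L:Fcex.int}, the vector field $Y(x,t)=f(x)\partial_t$ on $N\times(-\epsilon,\epsilon)$ is divergence-free with $i_Y\mu=p^*(f\nu)$, and its time-one flow $g$ maps $N$ to $N_f$; the point is to replace $Y$ near $N$ by a globally defined, compactly supported divergence-free vector field having the same $1$-jet (indeed the same germ) along $N\times\{$small$\}$. Choose a cutoff $\lambda\in C^\infty_c((-\epsilon,\epsilon))$ equal to $1$ near $0$ on an interval containing the range of $f$, and look for $Z\in\mathfrak X_c(M,\mu)$ with $i_Z\mu$ a closed compactly supported $(\dim M-1)$-form agreeing with $i_Y\mu=dt\wedge\,$(pullback, near $N$) up to an exact correction, built from $t^*\lambda$, $p^*\beta$, and a primitive of the compactly supported closed form $\Theta$ away from $N$. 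Then $h:=$ flow of $Z$ at time one lies in $\Diff_c(M,\mu)_0$ and satisfies $h(N)=g(N)=N_f$.

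The main obstacle is precisely this last gluing step. Unlike the exact case, where $f\nu=d\alpha$ globally on $N$ and one simply transports $d((t^*\lambda)(p^*\alpha))$, here $f\nu$ need only be cohomologous on $N$ to the restriction of a compactly supported closed form on $M$, so the "vertical'' vector field $f(x)\partial_t$ is not literally divergence-free after truncation — $d(t^*\lambda)\wedge p^*(f\nu)$ need not vanish, and $p^*(f\nu)$ need not be exact on the cutoff region. The fix is to absorb the discrepancy into a genuinely exact contribution: choose a primitive $\kappa$ of $(\text{truncation of }f\nu)-\iota_N^*\Theta$ that is defined in the tubular neighborhood and has controlled $1$-jet along $N$ (relative Poincar\'e lemma, \cite[\S43.10]{KM97}), and set $i_Z\mu:=(t^*\lambda)\,\Theta^{\mathrm{ext}}+d\big((t^*\lambda)\wedge p^*\kappa\big)$ for a suitable closed compactly supported extension $\Theta^{\mathrm{ext}}$ of $\Theta$ across the tube; one checks $Z$ is compactly supported, $di_Z\mu=0$ by closedness, and $Z$ agrees with $Y$ to first order along $N$, which is all that is needed to conclude $h(N)=N_f$. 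Verifying that these choices can be made simultaneously — closed, compactly supported, correct germ along $N$ — is the one genuinely technical point; everything else parallels Lemmas~\ref{L:cove} and \ref{L:Fcex.int} verbatim.
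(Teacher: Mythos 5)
Your overall strategy is the intended one: redo the proof of Lemma~\ref{L:Fcex.int} in the graph chart, replacing the single functional $V$ by the finite--dimensional map $N_f\mapsto[f\nu]$ taken modulo $\img\bigl(\iota_N^*\colon H^{\dim N}_c(M;\mathbb R)\to H^{\dim N}(N;\mathbb R)\bigr)$, and repair the transitivity step by a cohomological gluing. But two steps are wrong as written. First, the reduction to connected $S$ is not available: unlike the isodrastic constraint, the isovolume constraint genuinely couples the connected components of $N$, since the image of $\iota_N^*$ in $H^{\dim N}(N;\mathbb R)=\bigoplus_iH^{\dim N}(N_i;\mathbb R)$ is in general a proper subspace of the direct sum of the componentwise images. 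Remark~\ref{R:isodrast.cc} is stated only for $\mathcal D_\ex$, and Example~\ref{E:2d} (the $k$ meridians in $T^2$) contradicts your claim explicitly: componentwise the cokernels vanish, so your reduction would give codimension $0$ instead of the correct $k-1$, and it would also produce leaves that are too large. Fortunately the reduction is unnecessary: your chart argument with values in $\coker(\iota_N^*)$ works verbatim for disconnected $N$ if you take a tubular neighborhood of all of $N$ at once; only the decoupling claim has to be deleted.

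Second, your explicit gluing formula fails: with $i_Z\mu:=(t^*\lambda)\,\Theta^{\mathrm{ext}}+d\bigl((t^*\lambda)\wedge p^*\kappa\bigr)$ one has $di_Z\mu=d(t^*\lambda)\wedge\Theta^{\mathrm{ext}}$, which does not vanish in general, so $Z$ is not divergence free and the assertion ``$di_Z\mu=0$ by closedness'' is unjustified; truncating the closed form $\Theta$ destroys closedness, which is precisely the difficulty you set out to resolve. The repair is to leave $\Theta$ untouched and put the cutoff only inside an exact correction: on the tube $U\cong N\times(-\epsilon,\epsilon)$ the forms $p^*(f\nu)$ and $\Theta|_U$ are cohomologous, because both restrict to the class $[f\nu]$ on $N$ and $U$ retracts onto $N$; hence $p^*(f\nu)-\Theta|_U=d\gamma$ for some $\gamma\in\Omega^{\dim N-1}(U)$, and one sets $i_Z\mu:=\Theta+d\bigl((t^*\lambda)\gamma\bigr)$. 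This form is closed and compactly supported, and on $N\times[-a,a]$, where $\lambda\equiv1$, it equals $p^*(f\nu)=i_Y\mu$; therefore $Z\in\mathfrak X_c(M,\mu)$ agrees with $Y$ along all trajectories emanating from $N$, its time--one flow lies in $\Diff_c(M,\mu)_0$ and carries $N$ to $N_f$. With these two corrections (and no need for the relative Poincar\'e lemma or $1$-jet control here), the remaining parts of your argument --- tangency of the orbits, openness of the orbits in the leaves, and the codimension formula \eqref{E:codim.Fcmu} from the submersion onto $\coker(\iota_N^*)$ --- go through.
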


	The corresponding foliation on $\Gr_S(M)$ will be called \emph{isovolume foliation}.
	
	\begin{remark}\label{R:null}
		If every orientable connected component of $N$ is null-homologous in $M$, then the distributions $\mathcal D_\mu$ and $\mathcal D_\ex$ coincide at $N$.
		Indeed, the homomorphism in~\eqref{E:iotaN*} always factors through $H_c^{\dim N}(M;\mathbb R)\to H^{\dim N}(M;\mathbb R)$, giving rise to 
		the homomorphism dual to $(\iota_N)_*\colon H_{\dim N}(N;\mathbb R)\to H_{\dim N}(M;\mathbb R)$, which vanishes by the assumption. 
		The converse is not true in general. 
		To see this, consider the cylinder $M=S^1\times\mathbb R$ and one of its meridians $N=S^1\times\{y\}$. 
		The distributions $\mathcal D_\mu$ and $\mathcal D_\ex$ coincide at $N$, even though the orientable submanifold $N$ is not null-homologous.
	\end{remark}

	\begin{proposition}\label{P:below}
		Let $N_t\in\Gr_S(M)$ be a smooth curve, $t\in[0,1]$.
	
		(a) If $N_t$ is tangent to the isovolume distribution $\mathcal D_\mu$, then there exists a compact submanifold with boundary, $K\subseteq M$, 
		whose interior contains $N_t$ for all $t$, and such that the volume of each connected component of $K\setminus N_t$ is constant in $t$.
	
		(b) Suppose $K\subseteq M$ is a compact submanifold with boundary whose interior contains $N_t$ for all $t$.
		If, moreover, the volume of each connected component of $K\setminus N_t$ is independent of $t$, then $N_t$ is tangent to the isovolume distribution $\mathcal D_\mu$.
	\end{proposition}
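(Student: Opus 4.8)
The plan is to characterize tangency to $\mathcal D_\mu$ in terms of the evolving partition of a suitable compact region, and to exploit the fact that, in codimension one, a curve $N_t$ of submanifolds cuts an ambient region into pieces whose volumes encode exactly the cohomological data appearing in the definition of $\mathcal D_\mu$.

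For part (a): since $N_t$ is a compact curve in the Fr\'echet manifold $\Gr_S(M)$, its image is contained in a small tube, so I can pick a compact submanifold with boundary $K\subseteq M$ whose interior contains every $N_t$ and which is ``saturated'' in the sense that each $N_t$ separates $K$ into finitely many pieces with $\partial K$ distributed among them in a way that is constant in $t$ (shrink to a single chart-like neighborhood if necessary, using compactness of $[0,1]$ and a finite subcover, then take $K$ large enough). Let $\dot N_t\in T_{N_t}\Gr_S(M)=\Gamma(TM|_{N_t}/TN_{t})$ be the velocity. The derivative of the volume of a connected component $C$ of $K\setminus N_t$ is, by the transport/Leibniz formula for a domain with moving boundary, $\frac{d}{dt}\int_C\mu=\pm\int_{N_t}i_{\dot N_t}\mu$, where the sign depends on the (co)orientation of $N_t$ as part of $\partial C$, and the contributions from the fixed boundary $\partial K$ vanish because $\partial K$ does not move. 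Tangency to $\mathcal D_\mu$ means $[i_{\dot N_t}\mu]=\iota_{N_t}^*\eta_t$ for some compactly supported top class $\eta_t\in H_c^{\dim N}(M)$; representing $\eta_t$ by a form $\sigma_t$ supported away from $\partial K$ (here I use that $K$ sits in the interior of $M$, or enlarge $K$ and modify $\sigma_t$), one rewrites $i_{\dot N_t}\mu=\sigma_t|_{N_t}+d\tau_t$ on $N_t$ and integrates: the exact part drops by Stokes, and the $\sigma_t$ part gives a quantity that is the same for every component of $K\setminus N_t$ adjacent along $N_t$ (it only sees the global cohomological quantity, not which side one is on), hence the alternating sum over components of $\frac{d}{dt}\mathrm{vol}$ telescopes to zero along each connected piece of $N_t$; separating components of $N_t$ (as in Remark~\ref{R:isodrast.cc}) this forces $\frac{d}{dt}\mathrm{vol}(C)=0$ for every $C$.

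For part (b): conversely, given such a $K$ with all component-volumes constant, I differentiate in $t$ and read the computation backwards. Constancy of $\mathrm{vol}(C)$ for every connected component $C$ of $K\setminus N_t$ says exactly that $\int_{N_i}i_{\dot N_t}\mu$, for each connected component $N_i$ of $N_t$ co-bounding two such pieces, is determined by differences of these (constant) volumes in a way that shows the periods of $i_{\dot N_t}\mu$ against the fundamental classes $[N_i]\in H_{\dim N}(N_t)$ are matched by a class pulled back from $M$. Concretely: the fundamental classes of the orientable components of $N_t$ that are null-homologous in $K$ impose $\int_{N_i}i_{\dot N_t}\mu=0$ automatically, while for the remaining ones the hypothesis supplies a global compactly supported top-form on $M$ (built from a bump form on $K$ weighted by the constant volumes, or rather their time-derivatives, which are zero) whose restriction has the same periods; since $H^{\dim N}(N_t)$ is finite dimensional and detected by integration over the components $N_i$, matching all periods shows $[i_{\dot N_t}\mu]$ lies in the image of $\iota_{N_t}^*\colon H_c^{\dim N}(M)\to H^{\dim N}(N_t)$, i.e.\ $N_t$ is tangent to $\mathcal D_\mu$.

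The main obstacle I anticipate is the bookkeeping of coorientations and signs in the separation argument --- making precise the claim that the alternating sum of $\frac{d}{dt}\mathrm{vol}$ over the components of $K\setminus N_t$ meeting a fixed connected component of $N_t$ telescopes correctly, and that the boundary $\partial K$ genuinely contributes nothing because it is fixed --- together with the care needed to choose $K$ (via compactness of $[0,1]$ and Remark~\ref{R:isodrast.cc}, treating the connected components of $S$ separately) so that the combinatorial type of the decomposition $K\setminus N_t$ does not jump with $t$. Once that topological setup is fixed, both directions are the same Stokes-type computation read in opposite directions, with the cohomological statement reduced to the elementary fact that $H^{\dim N}(N_t;\mathbb R)$ is finite dimensional and is detected by integration over the (orientable) connected components of $N_t$.
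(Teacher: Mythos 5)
Both halves of your proposal have genuine gaps, and in part (a) the strategy itself cannot work. The statement of (a) is \emph{existential} in $K$: the conclusion is false for a general compact $K$ whose interior merely contains all $N_t$ (with combinatorially stable decomposition). Take $M=T^2$ with $\mu=dxdy$, $N_t=S^1\times\{y_0+t\}$ (tangent to $\mathcal D_\mu$, cf.\ Example~\ref{E:T2.conn}) and $K$ a thin closed annulus around the moving meridians: the two components of $K\setminus N_t$ have volumes that visibly change with $t$. So your plan of fixing a ``saturated'' $K$ and proving $\tfrac{d}{dt}\mathrm{vol}(C)=0$ by a Stokes computation must fail; indeed your own intermediate conclusion is only that certain \emph{signed sums} of the derivatives vanish (``the alternating sum telescopes to zero''), and that does not force each individual derivative to vanish --- in the torus example the two derivatives are $+c$ and $-c$. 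What is needed, and what the paper uses, is integrability: by Lemma~\ref{L:cmu} tangency of the path to $\mathcal D_\mu$ means $N_t=f_t(N_0)$ for an isotopy $f_t\in\Diff_c(M,\mu)$, and one then \emph{chooses} $K$ to contain the support of $f_t$ as well as all $N_t$; then $K\setminus N_t=f_t(K\setminus N_0)$ and constancy of the component volumes is immediate. Your argument never invokes the global information that the whole path lies in a single $\Diff_c(M,\mu)_0$-orbit, and no purely infinitesimal computation relative to an unadapted $K$ can substitute for it.

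In part (b) your overall idea (constancy of the component volumes should force the class $[i_{\dot N_t}\mu]$ into the image of $\iota_{N_t}^*\colon H_c^{\dim N}(M)\to H^{\dim N}(N_t)$ via a duality argument) points in the right direction, but the decisive step is only asserted: ``a global compactly supported top-form on $M$ built from a bump form on $K$ weighted by the constant volumes'' is not a construction (a top-degree form on $M$ pulls back to zero on $N_t$, and no recipe is given producing a closed compactly supported $(\dim M-1)$-form with the required periods), and the claim that orientable components null-homologous in $K$ ``automatically'' have vanishing period is wrong --- that vanishing is exactly what the volume hypothesis must supply, it is not automatic. The paper closes precisely this gap as follows: extend the path to an ambient isotopy $f_t\in\Diff_c(M)$ supported in $\mathring K$ (isotopy extension theorem); the hypothesis says $f_t^*\mu-\mu$ integrates to zero over each component of $K\setminus N_0$, and Poincar\'e duality $H_c^{\dim M}(\mathring K,N_0;\mathbb R)^*\cong H^0(K\setminus N_0;\mathbb R)$ converts this into exactness, $f_t^*\mu-\mu=d\alpha_t$ with $\alpha_t\in\Omega_c^{\dim M-1}(\mathring K)$ and $\iota_{N_0}^*\alpha_t=0$, smoothly in $t$; a Moser-type correction $g_t$ generated by $i_{Y_t}f_t^*\mu=-\partial_t\alpha_t$ is then tangent to $N_0$, so $f_t\circ g_t\in\Diff_c(M,\mu)_0$ carries $N_0$ to $N_t$, and Lemma~\ref{L:cmu} gives tangency to $\mathcal D_\mu$. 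If you want to keep an infinitesimal formulation, you would have to prove the period-matching statement honestly, e.g.\ via the long exact sequence of the pair $(\mathring K,N_t)$ in compactly supported cohomology; as written, that is the missing core of your argument.
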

	
	\begin{proof}
		(a) Suppose $N_t$ is tangent to $\mathcal D_\mu$.
		By Lemma~\ref{L:cmu} there exists an isotopy $f_t$ in $\Diff_c(M,\mu)$ such that $f_0=\id$ and $f_t(N_0)=N_t$.
		There exists a compact submanifold with boundary $K\subseteq M$ such that $f_t$ is supported in $K$ and such that $N_t$ is contained in the interior of $K$ for all $t$. 
		In particular, $K\setminus N_t=f_t(K\setminus N_0)$. 
		As $f_t$ is volume preserving, the volume of each connected component of $K\setminus N_t$ is constant in $t$.
	
		(b) 
		By the isotopy extension theorem \cite[Theorem~8.1.3]{H76} there exists an isotopy $f_t$ in $\Diff_c(M)$ such that $f_0=\id$ and $f_t(N_0)=N_t$ for all $t$.
		This isotopy may be chosen with support contained in $\mathring K$, the interior of $K$. 
		In particular, $f_t^*\mu-\mu$ is supported in $\mathring K$.
		According to the hypothesis, the integral of this form over each connected component of $K\setminus N_0$ vanishes.
		Via Poincar\'e duality, 
		\[
			H_c^{\dim M}(\mathring K,N_0;\mathbb R)^*=H^0(K\setminus N_0;\mathbb R),
		\]
		this implies that $f_t^*\mu-\mu$ represents the trivial class in $H^{\dim M}_c(\mathring K,N_0;\mathbb R)$.
		We conclude that there exist $\alpha_t\in\Omega_c^{\dim(M)-1}(\mathring K)$ such that $f_t^*\mu-\mu=d\alpha_t$ and $\iota_{N_0}^*\alpha_t=0$.
		Moreover, the forms $\alpha_t$ may be chosen to depend smoothly on $t$. 
		Extending $\alpha_t$ by zero to all of $M$, the equation $f_t^*\mu-\mu=d\alpha_t$ remains true on $M$.
		Proceeding as in \cite[Section~4]{M65}, we consider the time dependent vector field $Y_t$ uniquely characterized by $i_{Y_t}f_t^*\mu=-\frac\partial{\partial t}\alpha_t$. 
		In particular, $Y_t$ is supported in $K$ and tangent to $N_0$.
		Hence, the isotopy $g_t\in\Diff_c(M)$ integrating $Y_t$ and starting at $g_0=\id$ preserves $N_0$.
		By construction, 
		\[
			\tfrac\partial{\partial t}(g_t^*f_t^*\mu)
			=g_t^*\bigl(L_{Y_t}f_t^*\mu+\tfrac\partial{\partial t}f_t^*\mu\bigr)
			=g_t^*\bigl(-d\tfrac\partial{\partial t}\alpha_t+\tfrac\partial{\partial t}d\alpha_t\bigr)
			=0.
		\]
		Hence $(f_t\circ g_t)^*\mu=\mu$ and $f_t\circ g_t\in\Diff_c(M,\mu)_0$.
		Using Lemma~\ref{L:cmu} we conclude that $N_t=(f_t\circ g_t)(N_0)$ is tangential to $\mathcal D_\mu$.
	\end{proof}

	The pull back of the distribution $\mathcal D_\mu$ to $\Emb(S,M)$ will also be called \emph{isovolume distribution} and will be denoted by the same symbol.
	This distribution is invariant under $\Diff(M,\mu)$ and $\Diff(S)$.
	It has finite codimension and it is integrable with $\Diff_c(M,\mu)_0$ orbits as leaves.
	
	We end this section with some examples illustrating the isodrastic and isovolume foliations on $\Gr_S(M)$.

	\begin{example}[Curves in the plane]\label{E:R2.conn}
		We consider $M=\mathbb R^2$ with the standard volume form $\mu=dxdy$ and $S=S^1$.
		In this case the isodrastic foliation and the isovolume foliation on $\Gr_S(M)$ coincide according to Remark~\ref{R:null}, i.e., $\mathcal D_\ex=\mathcal D_\mu$.
		Two elements of $\Gr_S(M)$ which lie in the same isodrastic leaf must enclose the same area by Proposition~\ref{P:below}(a).
		Using part (b) of this proposition one readily shows that two curves enclosing the same area do indeed lie in the same isodrastic leaf.
		If $S$ has more connected components, the situation gets more involved as illustrated in Figure~\ref{F:R2}.
		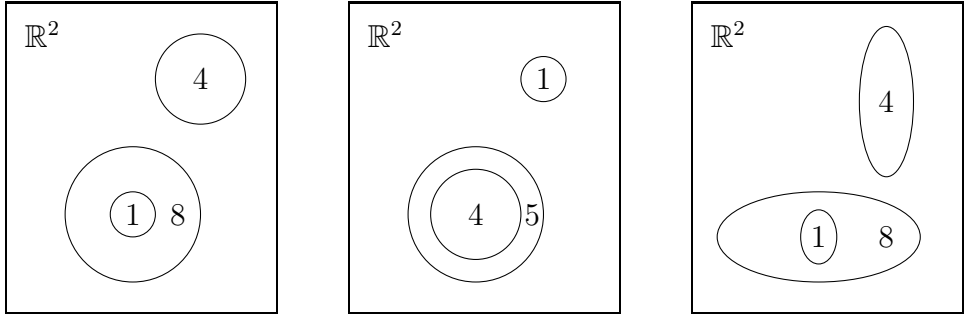
\begin{figure}
			\hfill
			\boxed{\begin{tikzpicture}[scale=.3]
				\draw (0,0) circle (1);
				\draw (0,0) node {$1$};
				\draw (0,0) circle (3);
				\draw (2,0) node {$8$};
				\draw (3,6) circle (2);
				\draw (3,6) node {$4$};
				\draw (-4,8) node {$\mathbb R^2$};
				\draw (6,-4);
			\end{tikzpicture}}
			\hfill
			\boxed{	\begin{tikzpicture}[scale=.3]
				\draw (0,0) circle (2);
				\draw (0,0) node {$4$};
				\draw (0,0) circle (3);
				\draw (2.5,0) node {$5$};
				\draw (3,6) circle (1);
				\draw (3,6) node {$1$};
				\draw (-4,8) node {$\mathbb R^2$};
				\draw (6,-4);
			\end{tikzpicture}}
			\hfill
			\boxed{\begin{tikzpicture}[scale=.3]
				\draw (0,-1) ellipse (0.8 and 1.2);
				\draw (0,-1) node {$1$};
				\draw (0,-1) ellipse (4.5 and 2);
				\draw (3,-1) node {$8$};
				\draw (3,5) ellipse (1.2 and 3.333);
				\draw (3,5) node {$4$};
				\draw (-4,8) node {$\mathbb R^2$};
				\draw (6,-4);
			\end{tikzpicture}}
			\hfill\
			\caption{
				Three elements in the same connected component of $\Gr_S(M)$ for $M=\mathbb R^2$ and $S=S^1\sqcup S^1\sqcup S^1$. 
				Numbers indicate the areas of the corresponding regions.
				The submanifolds indicated on the left and  on the right are in the same isodrastic leaf, the one indicated in the middle lies in a different leaf.
				Note that each submanifold consists of three connected components enclosing areas 1, 4 and 9, individually.
			}\label{F:R2}
		\end{figure}
	\end{example}
	
	\begin{example}[Connected curves in the 2-torus]\label{E:T2.conn}
		We consider the 2-torus $M=T^2=\mathbb R^2/\mathbb Z^2=S^1\times S^1$ with the standard (invariant) volume form $\mu=dxdy$ and $S=S^1$.
		In this case the isodrastic foliation has codimension one, see~\eqref{E:codim.Fcex}.
		If $N\in\Gr_S(M)$ is close to the meridian $N_0=S^1\times\{y_0\}$, then it is of the form $N=\{(x,f(x)):x\in S^1\}$ for some function $f\colon S^1\to\mathbb R$.
		Such an $N$ lies in the isodrastic leaf through $N_0$ iff $\int_{S^1}f(x)dx=0$, see Remark~\ref{R:open}.
		The isovolume foliation, on the other hand, has codimension zero at $N_0$, see \eqref{E:codim.Fcmu}.
		Hence, the isovolume leaf through $N_0$ coincides with the full connected component of $\Gr_S(M)$ containing $N_0$.
		For contractible curves, however, the isovolume leaves coincide with the isodrastic leaves according to Remark~\ref{R:null}.
	\end{example}

	\begin{example}[Disconnected curves in the 2-torus]\label{E:2d}
		We continue to consider the 2-torus $M=\mathbb R^2/\mathbb Z^2=S^1\times S^1$ with the standard volume form $\mu=dxdy$.
		Moreover, we assume $S=S^1\sqcup\cdots\sqcup S^1$ has $k$ connected components with $k\geq2$.
		According to \eqref{E:codim.Fcex} the isodrastic distribution has codimension $k$.
		It constrains the connected components of $N\in\Gr_S(M)$ independently (as long as they stay disjoint), cf.~Remark \ref{R:isodrast.cc}.
		As in the connected case discussed in Example~\ref{E:T2.conn}, some isovolume leaves are larger than the isodrastic leaves, 
		but the connected components of $N$ can not move independently while $N$ is constrained to an isovolume leaf.
		To see this consider $N_0=S^1\times\{y_1,\dotsc,y_k\}$ where $0\leq y_1<y_2<\cdots<y_k<1$.
		At $N_0$ the isovolume foliation has codimension $k-1$, see \eqref{E:codim.Fcmu}.
		The submanifold $S^1\times\{y_1+t,y_2+t,\dotsc,y_k+t\}$ lies in the isovolume leaf through $N_0$, for each $t$.
		Actually, Proposition~\ref{P:below} shows that a submanifold of the form $N=S^1\times\{y'_1,\dotsc,y'_k\}$ with $0\leq y'_1<y'_2<\cdots<y'_k<1$ 
		lies in the isovolume leaf through $N_0$ if and only if there exists $t$ such that $y'_i=y_i+t$ for all $i=1,\dotsc,k$.
	\end{example}
		
	Example~\ref{E:2d} can be extended to manifolds of the form $M=Q\times S^1$ endowed with a product volume form, where $Q$ is any orientable closed connected manifold. 
	The isovolume leaf through the non-connected submanifold $N_0=Q\times\{y_1,\dotsc,y_k\}$ behaves like the one described above.

\section{EPDiffvol dual pair for codimension one}\label{S:codim.one}

	We continue to consider the codimension one case, cf.~\eqref{E:codim1}.
	We present two dual pairs for the volume preserving diffeomorphism group, involving the isodrastic foliation and the isovolume foliation, respectively.

\subsection{Infinitesimal transitivity}

	Let $\mathcal E_\circ=T^*_{\reg,\circ}\Emb(S,M)$ denote the set of all pairs $(\varphi,\alpha)\in T^*_\reg\Emb(S,M)$ 
	for which $\varphi^*\alpha$ is a nowhere vanishing section of $|\Lambda|_S\otimes T^*S$.
	Clearly, $\mathcal E_\circ$ is an open subset of $\mathcal E$ that is invariant under $\Diff(M)$ and $\Diff(S)$.
	Note that $\mathcal E_\circ$ can only be nonempty if each connected component of $S$ has vanishing Euler characteristics.

	\begin{lemma}\label{L:zetaE0}
		Suppose $\Phi=(\varphi,\alpha)\in\mathcal E_\circ$ and $X\in\mathfrak X(M)$ such that $\varphi^*i_X\mu$ is exact.
		Then there exists $Y\in\mathfrak X_{c,\ex}(M)$ such that $\zeta^{\mathcal E_\circ}_X(\Phi)=\zeta^{\mathcal E_\circ}_Y(\Phi)$, 
		where $\zeta^{\mathcal E_\circ}$ denotes the infinitesimal $\Diff(M)$ action on $\mathcal E_\circ$.
	\end{lemma}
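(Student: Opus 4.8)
The strategy is to mimic the proof of Lemma~\ref{L:zetaT*M}, but now the codimension is one, so we can no longer appeal to the relative Poincar\'e lemma to kill $(L_{\tilde X}\mu)|_N$ entirely; instead we will only achieve that $\varphi^*i_{\tilde Y}\mu = 0$ (rather than the stronger jet condition along $N$), which is all that is needed to preserve the infinitesimal action at the point $\Phi=(\varphi,\alpha)\in\mathcal E_\circ$. Recall that the infinitesimal $\Diff(M)$ action on $\mathcal E_\circ$ is $\zeta^{\mathcal E_\circ}_X(\varphi,\alpha)=\zeta^{T^*M}_X\circ\alpha$, and this depends, through the cotangent lift, only on $X\circ\varphi$ together with the normal derivative of $X$ along $\varphi(S)$ contracted with $\alpha$; the hypothesis $\Phi\in\mathcal E_\circ$ (i.e.\ $\varphi^*\alpha$ nowhere zero) is what will give us enough room to correct $X$ by a vector field vanishing to appropriate order along $\varphi(S)$ without changing $\zeta^{\mathcal E_\circ}_X(\Phi)$.

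\textbf{Key steps.} First I would reduce to the case where $S$ is connected and $N=\varphi(S)$ embedded; since codimension one forces every connected component of $S$ to have vanishing Euler characteristic (else $\mathcal E_\circ=\emptyset$), the normal bundle $T^\perp N$ is trivial and a tubular neighborhood looks like $N\times(-\varepsilon,\varepsilon)$ with $\mu = dt\wedge p^*\nu + \cdots$. Second, as in Lemma~\ref{L:zetaT*M}, construct an auxiliary vector field $Z\in\mathfrak X(M)$ with $Z|_N=0$, $\alpha(DZ|_N)=0$, and $\operatorname{tr}(DZ|_N)>0$ pointwise on $N$; in codimension one $T^\perp N$ has rank one, so one takes $Z$ fiberwise radial in the normal direction (this time with $\ker\alpha$ adjusted so that $\alpha(DZ)=0$ still holds, using $\varphi^*\alpha\ne 0$ to guarantee $\ker\alpha\cap T^\perp N$ behaves well), giving $\operatorname{tr}(DZ|_N)=1>0$. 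Third, choose $f\in C^\infty(M)$ so that $\tilde X:=X-fZ$ satisfies $(L_{\tilde X}\mu)|_N=0$ as a form on $M$ restricted to $N$; exactly as before, $f|_N = (L_X\mu)|_N/(\operatorname{tr}(DZ|_N)\,\mu)$ works, and $\zeta^{\mathcal E_\circ}_{\tilde X}(\Phi)=\zeta^{\mathcal E_\circ}_X(\Phi)$ since $(fZ)|_N=0$ and $\alpha(D(fZ)|_N)=0$. Fourth — and here is the new input — use the hypothesis that $\varphi^*i_X\mu$ is exact on $S$ to arrange $\varphi^*i_{\tilde X}\mu$ exact as well (note $\varphi^*i_{fZ}\mu = f|_N\,\varphi^*i_Z\mu$ and $i_Z\mu$ restricted to $N$ vanishes because $Z|_N=0$, so $\varphi^*i_{\tilde X}\mu=\varphi^*i_X\mu$ is already exact), then solve $\varphi^*i_Z\mu = d\beta$ hence build a divergence-free correction supported near $N$ that cancels the $N$-restriction of $i_{\tilde X}\mu$; more precisely, set $\tilde Y := \tilde X - W$ where $W$ is chosen (via $i_W\mu = d(\text{cutoff}\cdot p^*\beta)$ type construction as in the proof of Lemma~\ref{L:Fcex.int}) so that $L_{\tilde Y}\mu$ is exact with compactly supported primitive on a neighborhood of $N$ and $\varphi^*i_{\tilde Y}\mu=0$, while $\tilde Y|_N = \tilde X|_N$ and the normal-derivative-against-$\alpha$ data is unchanged. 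Finally, extend $\tilde Y$ to an actual element $Y\in\mathfrak X_{c,\ex}(M)$ agreeing with $\tilde Y$ near $N$; since codimension is one this extension step needs the exactness of $\varphi^*i_{\tilde Y}\mu$ precisely to guarantee that $i_Y\mu$ can be taken globally exact with compact support, and then $\zeta^{\mathcal E_\circ}_X(\Phi)=\zeta^{\mathcal E_\circ}_Y(\Phi)$ as required.

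\textbf{Main obstacle.} The delicate point, absent in the codimension $\geq 2$ case, is the last step: in codimension one we cannot make $L_{\tilde Y}\mu$ vanish on a whole neighborhood of $N$ with a primitive that is flat along $N$, because the obstruction $[\varphi^*i_{\tilde X}\mu]\in H^{\dim S}(S)$ is genuinely there — that is exactly why the statement carries the hypothesis ``$\varphi^*i_X\mu$ exact.'' So I expect the real work to be bookkeeping the compactly-supported exactness: verifying that once $\varphi^*i_X\mu=d\sigma$ on $S$, one can choose the cutoffs and primitives so that $i_Y\mu=d\lambda$ with $\lambda$ compactly supported on $M$, which is where a relative Poincar\'e lemma on the pair (tubular neighborhood, $N$) together with the cohomological hypothesis gets used. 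The rest is a routine adaptation of Lemmas~\ref{L:zetaT*M} and~\ref{L:Fcex.int}, and the membership $Y\in\mathfrak X_{c,\ex}(M)$ then follows by the usual extend-and-cutoff argument, noting that $Y\in\mathfrak X_{c,\ex}(M,\mu)$ will in fact hold whenever we additionally ensure $L_Y\mu=0$ globally, but for the statement as phrased only $Y\in\mathfrak X_{c,\ex}(M)$ is claimed.
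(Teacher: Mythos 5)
Your overall skeleton does follow the paper's proof of Lemma~\ref{L:zetaT*M0}(a): the auxiliary field $Z$ with $Z|_N=0$, $\alpha(DZ|_N)=0$, $\tr(DZ|_N)>0$, the correction $\tilde X=X-fZ$ with $(L_{\tilde X}\mu)|_N=0$, a divergence-free $\tilde Y$ near $N$, and a final extension step where the exactness of $\varphi^*i_X\mu$ is used. You also locate correctly where the hypothesis $\Phi\in\mathcal E_\circ$ enters (positivity of the trace); note, though, that with the paper's choice $\tilde Z=P_{\ker\alpha}P_{T^\perp N}$ the trace is $1-\alpha(\nu)^2/|\alpha|^2$ for a unit normal $\nu$, not $1$, and your preliminary reduction to a trivial normal bundle is neither needed nor true in general (a closed $N$ with $\chi(N)=0$ need not be orientable, e.g.\ a Klein bottle in an orientable $3$-manifold).

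The genuine problem is your fourth step. The correction $\tilde Y=\tilde X-W$ with $i_W\mu=d(\chi\,p^*\beta)$ cannot satisfy both $\varphi^*i_{\tilde Y}\mu=0$ and $\tilde Y|_N=\tilde X|_N$: the pullback $\varphi^*i_{\tilde Y}\mu$ depends only on $\tilde Y\circ\varphi$, so killing it forces $W|_N\neq0$ (a nonzero normal component wherever $\varphi^*i_X\mu\neq0$), which changes the value of the field along $N$ and hence changes the infinitesimal action at $\Phi$ --- the very equality $\zeta^{\mathcal E_\circ}_X(\Phi)=\zeta^{\mathcal E_\circ}_Y(\Phi)$ you must preserve. (Also, $\varphi^*i_Z\mu=d\beta$ must be meant as $\varphi^*i_{\tilde X}\mu=d\beta$, since $\varphi^*i_Z\mu=0$.) Moreover, arranging $\varphi^*i_{\tilde Y}\mu=0$ is not what is needed, and your ``main obstacle'' misplaces the codimension-one difficulty: the relative Poincar\'e lemma applies verbatim here, since it only uses the pointwise condition $(L_{\tilde X}\mu)|_N=0$, and yields $\tilde Y$ on a neighborhood $U$ of $N$ with $L_{\tilde Y}\mu=0$ and $(j^1\tilde Y)|_N=(j^1\tilde X)|_N$, exactly as in Lemma~\ref{L:zetaT*M}. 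The exactness hypothesis enters only at the very last step: because $\iota_N^*i_{\tilde Y}\mu=\iota_N^*i_X\mu$ is exact and $U$ retracts onto $N$, the closed form $i_{\tilde Y}\mu$ is exact on a tubular neighborhood, say $i_{\tilde Y}\mu=d\beta$; setting $i_Y\mu:=d(\chi\beta)$ for a cutoff $\chi$ equal to $1$ near $N$ defines $Y\in\mathfrak X_{c,\ex}(M,\mu)$ which coincides with $\tilde Y$ near $N$, hence has the same $1$-jet as $\tilde X$ along $N$ and the same action at $\Phi$. Dropping the $W$-correction and performing this step instead repairs your argument; note also that $\mathfrak X_{c,\ex}(M)$ in the statement is the paper's shorthand for $\mathfrak X_{c,\ex}(M,\mu)$, so divergence-freeness of $Y$ is not optional, but it comes for free from $i_Y\mu=d(\chi\beta)$.
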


	This lemma follows immediately from the following

	\begin{lemma}\label{L:zetaT*M0}
		Let $N\subseteq M$ be a compact submanifold without boundary of codimension $\dim M-\dim N=1$, 
		suppose $\alpha\in\Gamma(T^*M|_N)$ projects to a nowhere vanishing section of $T^*N$, and consider $X\in\mathfrak X(M)$.
		\begin{enumerate}[(a)]
			\item	If $i_X\mu$ becomes exact when pulled back to $N$, 
				then there exists $Y\in\mathfrak X_{c,\ex}(M,\mu)$ with $\zeta^{T^*M}_X\circ\alpha=\zeta^{T^*M}_Y\circ\alpha$.
			\item	If the pull back of $i_X\mu$ to $N$ represents a cohomology class in the image of $H^*_c(M)\to H^*(N)$, 
				then there exists $Y\in\mathfrak X_c(M,\mu)$ with $\zeta^{T^*M}_X\circ\alpha=\zeta^{T^*M}_Y\circ\alpha$.
			\item	If the pull back of $i_X\mu$ to $N$ represents a cohomology class in the image of $H^*(M)\to H^*(N)$, 
				then there exists $Y\in\mathfrak X(M,\mu)$ with $\zeta^{T^*M}_X\circ\alpha=\zeta^{T^*M}_Y\circ\alpha$.
		\end{enumerate}
	\end{lemma}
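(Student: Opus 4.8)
The plan is to adapt the proof of Lemma~\ref{L:zetaT*M} to the codimension one case, accounting for the fact that the trace trick used there no longer yields a strictly positive trace. The key observation is that, since $\alpha$ projects to a nowhere vanishing section of $T^*N$, the infinitesimal action $\zeta^{T^*M}_X\circ\alpha$ depends on $X$ only through $X|_N$ and the covariant derivative $D_xX$ in directions \emph{transverse} to $N$; more precisely, along $N$ the data $(X|_N, \alpha\circ D X)$ determines $\zeta^{T^*M}_X\circ\alpha$, and since $\alpha$ is nonzero on $TN$ the normal component of $DX$ can be absorbed. So the freedom we need to exploit is the ability to modify $X$ near $N$ by a vector field $Z$ with $Z|_N=0$ and $\alpha(D_xZ)=0$ for $x\in N$, while controlling $(L_Z\mu)|_N = \tr(D_xZ)\,\mu$. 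Choosing $Z$ to be a fiberwise-linear vector field on the (1-dimensional) normal bundle built from a section of $\eend(TM|_N)$ that kills $TN$ and acts on $T^\perp N$, one arranges $\tr(D_xZ) = g(x)$ for an \emph{arbitrary} prescribed function $g\in C^\infty(N)$ (not merely a positive one, as the normal bundle is a line bundle here). Thus, unlike in codimension $\geq 2$, one can only correct $(L_X\mu)|_N$ by adding a \emph{multiple of $\mu|_N$}, i.e.\ one can reach any cohomologically equivalent representative but cannot kill the cohomology class of $\iota_N^*(L_X\mu)=\iota_N^*(di_X\mu)$... wait — $d i_X\mu$ is already exact on $M$, but $\iota_N^* i_X\mu$ need not be exact on $N$. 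This is exactly where the three hypotheses in (a), (b), (c) enter.

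Here is the structure. In all three cases, first use the $Z$-correction above to replace $X$ by $\tilde X = X - fZ$ (with $f|_N$ chosen so that $(L_{\tilde X}\mu)|_N$ has prescribed top-form part along $N$); one wants $(L_{\tilde X}\mu)|_N = 0$, i.e.\ one wants $\iota_N^*(i_X\mu) - $ (its top-form part, which is $(f\tr DZ)\mu|_N$)... but $i_X\mu$ pulled back to $N$ \emph{is} a top form on $N$, so in fact $(L_{\tilde X}\mu)|_N=0$ can always be arranged exactly as in Lemma~\ref{L:zetaT*M}: set $f|_N = (L_X\mu)|_N/(\tr(DZ|_N)\,\mu)$ with $\tr(DZ|_N)$ now an arbitrary nonvanishing function of our choosing. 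So after this step $(L_{\tilde X}\mu)|_N=0$ and $\zeta^{T^*M}_{\tilde X}\circ\alpha=\zeta^{T^*M}_X\circ\alpha$. Next, apply the relative Poincar\'e lemma (\cite[\S43.10]{KM97}) on a neighborhood $U$ of $N$: there is $\lambda\in\Omega^{\dim M -1}(U)$ with $d\lambda = (L_{\tilde X}\mu)|_U$ and $(j^1\lambda)|_N=0$. Define $Z'$ on $U$ by $i_{Z'}\mu=\lambda$, so $\tilde Y:=\tilde X|_U - Z'$ satisfies $(j^1\tilde Y)|_N=(j^1\tilde X)|_N$ and $L_{\tilde Y}\mu=0$ on $U$. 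At this point $\zeta^{T^*M}_{\tilde Y}\circ\alpha = \zeta^{T^*M}_X\circ\alpha$ and $\tilde Y$ is divergence free near $N$.

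The remaining task — and the genuinely codimension-one issue — is to extend $\tilde Y$ from a neighborhood of $N$ to a globally divergence free vector field on $M$ in the appropriate class, without changing its $1$-jet along $N$ (hence without changing $\zeta^{T^*M}\circ\alpha$ at $\alpha$). Write $\tilde Y|_U$ as before. Choose a bump function $\chi$ equal to $1$ near $N$, supported in $U$; then $\chi\tilde Y$ is globally defined and divergence free near $N$, but $L_{\chi\tilde Y}\mu = d(i_{\chi\tilde Y}\mu)$ is a closed (indeed exact, $=d((i_{\tilde Y}\mu)\cdot$ suitably extended$)$) top form supported in the collar, with total integral over each connected component of $M\setminus N$ equal to something we must kill. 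This is where the hypotheses bite:
\begin{enumerate}[(a)]
	\item If $\iota_N^*(i_X\mu)$ is exact on $N$, write $i_X\mu|_N = d\kappa$ and use $\kappa$ to modify $\tilde Y$ so that $i_{\tilde Y}\mu$ extends to a compactly supported \emph{exact} form on $M$; then the correction to restore divergence-freeness is $i_{Y''}\mu = d(\text{compactly supported primitive})$, giving $Y\in\mathfrak X_{c,\ex}(M,\mu)$.
	\item If $\iota_N^*(i_X\mu)$ lies in the image of $H^*_c(M)\to H^*(N)$, pick a compactly supported closed form on $M$ representing a preimage, subtract it to reduce to case (a) up to compactly supported exact forms, obtaining $Y\in\mathfrak X_c(M,\mu)$ (no longer exact, but still compactly supported and divergence free).
	\item If $\iota_N^*(i_X\mu)$ lies in the image of $H^*(M)\to H^*(N)$, do the same with a (not necessarily compactly supported) closed form on $M$, obtaining $Y\in\mathfrak X(M,\mu)$.
\end{enumerate}
In each case the subtracted global form is closed, so subtracting the corresponding vector field (defined via $i_\bullet\mu=$ that form) preserves divergence-freeness, and one arranges it to have vanishing $1$-jet along $N$ by multiplying by a function that is $1$ off a small neighborhood of $N$ and whose effect on the cohomology class is controlled — here one uses that a closed form on $M$ whose restriction to $N$ is exact can be chosen to vanish to high order along $N$ after adding an exact form. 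The main obstacle is precisely this last bookkeeping: arranging simultaneously (i) the correct global cohomological class of $i_Y\mu$, (ii) divergence-freeness of $Y$ on all of $M$, and (iii) agreement of the full $1$-jet of $Y$ with that of $X$ along $N$ (so that $\zeta^{T^*M}_Y\circ\alpha=\zeta^{T^*M}_X\circ\alpha$, using that $\alpha$ is nonvanishing on $TN$). I expect this to go through by the standard Moser-type argument of \cite[Section~4]{M65} combined with the relative Poincar\'e lemma, exactly as in the proof of Lemma~\ref{L:zetaT*M}, the only new input being the elementary observation that in codimension one the $Z$-trick delivers an arbitrary (rather than positive) multiple of $\mu|_N$, and the elementary de~Rham-theoretic translation of the hypotheses (a)--(c) into the existence of the appropriate global primitive/representative.
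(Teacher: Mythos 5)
Your proposal follows essentially the same route as the paper's proof: the same correction by a vector field $Z$ with $Z|_N=0$, $\alpha(D_xZ)=0$ and nonvanishing trace to arrange $(L_{\tilde X}\mu)|_N=0$ without changing $\zeta^{T^*M}_X\circ\alpha$, then the relative Poincar\'e lemma to produce a divergence free $\tilde Y$ near $N$ with the same $1$-jet as $\tilde X$ along $N$, and finally a cohomological extension step (bump function plus a global exact, compactly supported closed, or closed representative) which is exactly where the hypotheses (a)--(c) enter -- the paper states this last step tersely and your sketch fills it in correctly. One small correction: the possibility of making $\tr(D_xZ)$ nonvanishing is not due to the normal bundle being a line bundle but to the hypothesis that $\alpha$ is nowhere vanishing on $TN$ -- since $D_xZ$ must have image in $\ker\alpha_x$, its trace is forced to vanish whenever $\ker\alpha_x=T_xN$, and the paper's computation $\tr(\tilde Z_x)=1-\alpha_x(\nu)^2/|\alpha_x|^2$ makes this dependence explicit.
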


	\begin{proof}
		As in the proof of Lemma~\ref{L:zetaT*M} we begin by constructing a vector field $Z\in\mathfrak X(M)$ such that
		\begin{equation}\label{E:tZ.var}
		        Z_x=0,\qquad\alpha(D_xZ)=0,\qquad\text{and}\qquad\tr(D_xZ)>0
		\end{equation}
		for all $x\in N$.
		To this end we fix a fiber-wise Euclidean inner product on $TM|_N$ and consider again $\tilde Z\in\Gamma(\eend(TM|_N))$ given by $\tilde Z=P_{\ker\alpha}P_{T^\perp N}$.
		Since the codimension is one, we now have
		\[
		        \tr(\tilde Z_x)=1-\frac{\alpha_x(\nu)^2}{|\alpha_x|^2}
		\]
		where $\nu$ is a unit normal to $T_xN$.
		In particular, $\tr(\tilde Z_x)>0$ in view of the assumption that $\alpha_x$ does not vanish on $T_xN$.
		As before, $\tilde Z$ gives rise to a fiber-wise linear vector field on the total space of the normal bundle $T^\perp N$ 
		which can be chopped off and glued into $M$ to obtain a vector field $Z$ on $M$ such that $Z_x=0$ and $D_xZ=\tilde Z_x$ for all $x\in N$, whence \eqref{E:tZ.var}.

		We choose a function $f$ on $M$ such that
		\[
			f|_N=\frac{(L_X\mu)|_N}{\tr(DZ|_N)\mu}.
		\]
		Proceeding as in the proof of Lemma~\ref{L:zetaT*M}, we see that the vector field $\tilde X:=X-fZ$ satisfies
		\begin{equation}\label{E:tX.var}
			\zeta^{T^*M}_{\tilde X}\circ\alpha=\zeta^{T^*M}_X\circ\alpha\qquad\text{and}\qquad(L_{\tilde X}\mu)|_N=0.
		\end{equation}

		Using the relative Poincar\'e lemma as in the proof of Lemma~\ref{L:zetaT*M}, we obtain a vector field $\tilde Y$ on an open neighborhood $U$ of $N$ such that
		\begin{equation}\label{E:tY.var}
		        (j^1\tilde Y)|_N=(j^1\tilde X)|_N\qquad\text{and}\qquad L_{\tilde Y}\mu=0.
		\end{equation}

		Note that the pull backs of $i_X\mu$ and $i_{\tilde Y}\mu$ to $N$ coincide.
		If they represent a class in the image of $H^*(M)\to H^*(N)$, then there exists a vector field $Y\in\mathfrak X(M,\mu)$ which coincides with $\tilde Y$ in a neighborhood of $N$.
		Combining this with \eqref{E:tX.var} and \eqref{E:tY.var}, we obtain $\zeta^{T^*M}_X\circ\alpha=\zeta^{T^*M}_Y\circ\alpha$.
		If the pull back of $i_X\mu$ to $N$ represents a class in the image of $H^*_c(M)\to H^*(N)$, then $Y$ may be chosen in $\mathfrak X_c(M,\mu)$.
		If the pull back of $i_X\mu$ to $N$ is exact, then $Y$ may be chosen in $\mathfrak X_{c,\ex}(M,\mu)$.
	\end{proof}

\subsection{EPDiffvol dual pair for isodrastic leaves}

	As in Section~\ref{SS:T*regL} we let $L$ denote the preimage under $\Emb(S,M)\to\Gr_S(M)$ of an isodrastic leaf in $\Gr_S(M)$, i.e., the preimage of a $\Diff_{c,\ex}(M,\mu)$ orbit in $\Gr_S(M)$.
	Recall that the connected components of $L$ are isodrastic leaves in $\Emb(S,M)$.
	Let 
	\begin{equation}\label{E:varL}
		\mathcal L:=q(\iota^{-1}(\mathcal E_\circ)){=T^*_{\reg,\circ}L\subseteq T^*_\reg L}
	\end{equation}
	denote the subset of all $(\varphi,[\alpha])\in T^*_\reg L$ such that $\alpha$ projects to a nowhere vanishing section of  $|\Lambda|_S\otimes T^*S$.
	Clearly, this is an open subset in $T^*_\reg L$ which is invariant under the action of $\Diff_{c,\ex}(M,\mu)$ and $\Diff(S)$.
	The restrictions to $\mathcal L$ of the canonical 1-form and the symplectic form on $T^*_\reg L$ will be denoted by $\theta^{\mathcal L}$ and $\omega^{\mathcal L}$, respectively.

	\begin{theorem}\label{T:dual.pair.codim.one}
		Let $\mu$ be a volume form on a smooth manifold $M$, and suppose $S$ is a closed manifold such that $\dim M-\dim S=1$.
		Moreover, let $L\subseteq\Emb(S,M)$ denote the preimage of an isodrastic leaf in $\Gr_S(M)$.
		Then the weak symplectic Fr\'echet manifold $\mathcal L{=T^*_{\reg,\circ}L}$ in \eqref{E:varL}, 
		together with the commuting actions of $\Diff_{c,ex}(M,\mu)$ and $\Diff(S)$, 
		and the equivariant moment maps associated with the canonical invariant 1-form $\theta^{\mathcal L}$, 
		\begin{equation}\label{E:dp.vol.codim.one}
			\mathfrak X_{c,\ex}(M,\mu)^*\xleftarrow{J^{\mathcal L}_L}\mathcal L\xrightarrow{J^{\mathcal L}_R}\Omega^1(S;|\Lambda|_S)\subseteq\mathfrak X(S)^*
		\end{equation}
		constitute a symplectic dual pair with mutually symplectic orthogonal orbits.
		More explicitly, the moment maps are
		\begin{equation}\label{E:moma.vol.codim.one}
			\langle J^{\mathcal L}_L(\varphi,[\alpha]),X\rangle=\int_S\alpha(X\circ\varphi)
			\qquad\text{and}\qquad
			J^{\mathcal L}_R(\varphi,[\alpha])=\varphi^*\alpha
		\end{equation}
		where $X\in\mathfrak X_{c,\ex}(M,\mu)$ and $(\varphi,[\alpha])\in\mathcal L$.
	\end{theorem}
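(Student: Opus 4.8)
The plan is to deduce the statement from the EPDiff dual pair (Theorem~\ref{epdiff}) by transporting it along the two maps $q$ and $\iota$ of~\eqref{E:diag}, restricted over $\mathcal E_\circ$. The routine ingredients come first: $\omega^{\mathcal L}=\omega^{T^*_\reg L}|_{\mathcal L}$ is weak symplectic, being the restriction to an open subset of the weak symplectic form on $T^*_\reg L$ from Section~\ref{SS:T*regL}; the two actions commute, being restrictions/descents of the commuting $\Diff(M)$- and $\Diff(S)$-actions on $T^*_\reg\Emb(S,M)$; and $\theta^{\mathcal L}$ is invariant under both, so it furnishes equivariant moment maps $\langle J^{\mathcal L}_L,X\rangle=\theta^{\mathcal L}(\zeta^{\mathcal L}_X)$ and $\langle J^{\mathcal L}_R,Y\rangle=\theta^{\mathcal L}(\zeta^{\mathcal L}_Y)$. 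The explicit formulas~\eqref{E:moma.vol.codim.one} follow by pulling back along $q$ via~\eqref{E:thetaL}; for their well-definedness note that two representatives of $(\varphi,[\alpha])$ differ by $f\mu_\varphi$ with $f\in H^0(S;\mathfrak o_S)$, so $\varphi^*(f\mu_\varphi)=0$ since $\mu_\varphi$ lands in $\varphi^*\Ann(T\varphi(S))$, while for $X\in\mathfrak X_{c,\ex}(M,\mu)$ with $i_X\mu=d\beta$ we get $\int_S(f\mu_\varphi)(X\circ\varphi)=\int_Sf\,\varphi^*(i_X\mu)=\int_Sf\,d(\varphi^*\beta)=0$ by Stokes because $f$ is parallel. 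This is precisely the point at which exactness of the vector fields enters.

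The real content is that the $\Diff_{c,\ex}(M,\mu)$- and $\Diff(S)$-orbits on $\mathcal L$ are mutually symplectic orthogonal; by the discussion following~\eqref{E:mso} this is stronger than, hence implies, the dual pair property. One inclusion in each direction, $\mathfrak X_{c,\ex}(M,\mu)_{\mathcal L}\subseteq\mathfrak X(S)_{\mathcal L}^\perp$ and $\mathfrak X(S)_{\mathcal L}\subseteq\mathfrak X_{c,\ex}(M,\mu)_{\mathcal L}^\perp$, is routine: each amounts to $\omega^{\mathcal L}(\zeta^{\mathcal L}_X,\zeta^{\mathcal L}_Y)=0$, which follows from the moment map equation and the $\Diff(S)$-invariance of $J^{\mathcal L}_L$. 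For the two reverse inclusions I fix $(\varphi,[\alpha])\in\mathcal L$ with a representative $(\varphi,\alpha)\in E:=\iota^{-1}(\mathcal E_\circ)\subseteq(T^*_\reg\Emb(S,M))|_L$. Both groups preserve $L$ — $\Diff(S)$ does not alter $\varphi(S)$, and the isodrastic leaf underlying $L$ is a $\Diff_{c,\ex}(M,\mu)$-orbit by Lemma~\ref{L:Fcex.int.Emb} — so all the relevant infinitesimal generators at $(\varphi,\alpha)$ lie in $T_{(\varphi,\alpha)}E=(Tp)^{-1}(T_\varphi L)$, and $q$ is equivariant. I will use $q^*\omega^{\mathcal L}=\iota^*\omega^{\mathcal E}$ and $\ker(T_{(\varphi,\alpha)}q)=\ker(\iota^*\omega^{\mathcal E}_{(\varphi,\alpha)})$ from~\eqref{E:omegaL} and~\eqref{E:kerTq}, together with the nondegenerate finite-dimensional pairing~\eqref{E:pairing} of $\ker(T_{(\varphi,\alpha)}q)$ with $(T_\varphi\Emb(S,M))|_L/T_\varphi L$.

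To prove $\mathfrak X(S)_{\mathcal L}^\perp\subseteq\mathfrak X_{c,\ex}(M,\mu)_{\mathcal L}$, take $w$ in the left side and lift it to $\tilde w\in T_{(\varphi,\alpha)}E$ with $q_*\tilde w=w$. Every $\zeta^{\mathcal E}_Y(\varphi,\alpha)$, $Y\in\mathfrak X(S)$, lies in $T_{(\varphi,\alpha)}E$ and $q$-projects to $\zeta^{\mathcal L}_Y(\varphi,[\alpha])$, so $q^*\omega^{\mathcal L}=\iota^*\omega^{\mathcal E}$ gives $\omega^{\mathcal E}(\tilde w,\zeta^{\mathcal E}_Y(\varphi,\alpha))=\omega^{\mathcal L}(w,\zeta^{\mathcal L}_Y)=0$ for all $Y$; by Theorem~\ref{epdiff} applied on the open set $\mathcal E_\circ$, $\tilde w=\zeta^{\mathcal E}_X(\varphi,\alpha)$ for some $X\in\mathfrak X_c(M)$. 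Since $\tilde w\in T_{(\varphi,\alpha)}E$, its $p$-projection $X\circ\varphi$ lies in $T_\varphi L=\mathcal D_{\varphi,\ex}$, i.e.\ $\varphi^*(i_X\mu)$ is exact, so Lemma~\ref{L:zetaE0} produces $Y\in\mathfrak X_{c,\ex}(M,\mu)$ with $\zeta^{\mathcal E}_X(\varphi,\alpha)=\zeta^{\mathcal E}_Y(\varphi,\alpha)$, and then $w=q_*\tilde w=\zeta^{\mathcal L}_Y(\varphi,[\alpha])\in\mathfrak X_{c,\ex}(M,\mu)_{\mathcal L}$.

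The opposite inclusion $\mathfrak X_{c,\ex}(M,\mu)_{\mathcal L}^\perp\subseteq\mathfrak X(S)_{\mathcal L}$ is the crux. Lifting $w$ to $\tilde w\in T_{(\varphi,\alpha)}E$ and arguing as above, $\omega^{\mathcal E}(\tilde w,\zeta^{\mathcal E}_Y(\varphi,\alpha))=0$ for all $Y\in\mathfrak X_{c,\ex}(M,\mu)$, and Lemma~\ref{L:zetaE0} shows the linear functional $\ell(X):=\omega^{\mathcal E}(\tilde w,\zeta^{\mathcal E}_X(\varphi,\alpha))$ on $\mathfrak X_c(M)$ vanishes on the finite-codimension subspace $\{X:X\circ\varphi\in T_\varphi L\}=\{X:\varphi^*(i_X\mu)\text{ exact}\}$; but for $X$ outside this subspace $\zeta^{\mathcal E}_X(\varphi,\alpha)$ escapes $T_{(\varphi,\alpha)}E$, so one cannot yet conclude that $\tilde w$ is $\omega^{\mathcal E}$-orthogonal to the whole $\Diff_c(M)$-orbit. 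The fix is to replace $\tilde w$ by $\tilde w+\kappa$ for a suitable $\kappa\in\ker(T_{(\varphi,\alpha)}q)$, which does not change $q_*\tilde w$: since $\omega^{\mathcal E}(\kappa,\cdot)$ annihilates $T_{(\varphi,\alpha)}E$, the assignment $X\mapsto\omega^{\mathcal E}(\kappa,\zeta^{\mathcal E}_X(\varphi,\alpha))$ is the pairing~\eqref{E:pairing} evaluated against $[X\circ\varphi]\in(T_\varphi\Emb(S,M))|_L/T_\varphi L$, and as $X\mapsto[X\circ\varphi]$ is surjective (every section of $\varphi^*TM$ extends to a compactly supported vector field on $M$) the nondegeneracy and matching finite dimensions in~\eqref{E:pairing} let these functionals exhaust all functionals on $\mathfrak X_c(M)$ vanishing on $\{X:X\circ\varphi\in T_\varphi L\}$ — exactly the space on which $\ell$ vanishes. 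Choosing $\kappa$ with $\omega^{\mathcal E}(\kappa,\zeta^{\mathcal E}_X(\varphi,\alpha))=-\ell(X)$ for all $X\in\mathfrak X_c(M)$, Theorem~\ref{epdiff} now forces $\tilde w+\kappa=\zeta^{\mathcal E}_Y(\varphi,\alpha)$ for some $Y\in\mathfrak X(S)$, whence $w=q_*(\tilde w+\kappa)=\zeta^{\mathcal L}_Y(\varphi,[\alpha])\in\mathfrak X(S)_{\mathcal L}$. Combining these four inclusions yields mutual symplectic orthogonality and hence the theorem. I expect this $\kappa$-correction — reconciling the finite-dimensional annihilator directions of $\mathcal L$, invisible to $q$ but paired nondegenerately with the normal directions of $L$ in $\Emb(S,M)$, with the $\Diff_c(M)$-orbit directions that leave $E$ — to be the only genuine obstacle; everything else is a transcription of the EPDiff argument.
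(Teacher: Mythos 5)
Your proposal is correct and follows essentially the same route as the paper's proof: lift along $q$, invoke the mutual symplectic orthogonality of the $\Diff(S)$- and $\Diff_c(M)$-orbits on $\mathcal E_\circ$ from Theorem~\ref{epdiff}, use Lemma~\ref{L:zetaE0} to pass between $\mathfrak X_c(M)$ and $\mathfrak X_{c,\ex}(M,\mu)$, and in the hard direction correct the lift by an element of $\ker(Tq)$ via the nondegenerate pairing~\eqref{E:pairing}. Your $\kappa$-correction is exactly the paper's step of choosing $X_1,\dotsc,X_r$ spanning the normal directions and subtracting a suitable $\eta\in\ker(Tq)$, just phrased in terms of the induced functional on $(T_\varphi\Emb(S,M))|_L/T_\varphi L$ rather than a basis.
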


	\begin{proof}
		Using \eqref{E:thetaL} we see that the restrictions of the moment maps in \eqref{E:dp.vol} to $\iota^{-1}(\mathcal E_\circ)$ 
		both factor through the projection $q\colon\iota^{-1}(\mathcal E_\circ)\to\mathcal L$ and give rise to the moment maps in \eqref{E:dp.vol.codim.one},
		see also \eqref{E:varL} and \eqref{E:diag}.
		The formulas for the moment maps provided in \eqref{E:moma.vol.codim.one} thus follow from the formulas in \eqref{E:moma.vol}.
		The fact that the expressions in \eqref{E:moma.vol.codim.one} do indeed only depend on the class represented by $\alpha$ can alternatively be seen using 
		Lemma~\ref{L:Fcex.int.Emb} which shows that $f\mu_\varphi$ vanishes on the tangent space $\mathcal D_{\varphi,\ex}$ of the isodrastic leaf through $\varphi$,
		and both $X\circ\varphi$ for $X\in\mathfrak X_{c,\ex}(M,\mu)$ and $T\varphi\circ Y$ for $Y\in\mathfrak X(S)$ belong to $\mathcal D_{\varphi,\ex}$, see also \eqref{explit}.

		Suppose $\xi\in T_{(\varphi,[\alpha])}\mathcal L$ is such that 
		\begin{equation}\label{E:p1}
			\omega^{\mathcal L}\bigl(\xi,\zeta^{\mathcal L}_Y(\varphi,[\alpha])\bigr)=0,\qquad\textrm{for all $Y\in\mathfrak X(S)$.}
		\end{equation}
		We have to show that $\xi=\zeta^{\mathcal L}_X(\varphi,[\alpha])$ for some vector field $X\in\mathfrak X_{c,\ex}(M,\mu)$.
		To this end, choose $\tilde\xi\in T_{(\varphi,\alpha)}(\mathcal E_\circ|_L)$ such that $Tq\cdot\tilde\xi=\xi$.	
		Note that $Tp\cdot\tilde\xi\in T_\varphi L$.
		By the $\Diff(S)$ equivariance of $q$ and \eqref{E:omegaL} we obtain from \eqref{E:p1}
		\[
			\omega^{\mathcal E_\circ}\bigl(\tilde\xi,\zeta^{\mathcal E_\circ}_Y(\varphi,\alpha)\bigr)=0,\qquad\textrm{for all $Y\in\mathfrak X(S)$.}
		\]
		As the $\Diff(S)$ and $\Diff_c(M)$ orbits on $\mathcal E_\circ$ are mutually symplectic orthogonal, see Theorem~\ref{epdiff} or \cite{GBV12},
		there exists $Z\in\mathfrak X_c(M)$ such that $\tilde\xi=\zeta_Z^{\mathcal E_\circ}(\varphi,\alpha)$.
		Since $p$ is $\Diff(M)$ equivariant this yields
		\[
			Tp\cdot\tilde\xi=\zeta_Z^{\Emb(S,M)}(\varphi)=Z\circ\varphi.
		\]
		As $Tp\cdot\tilde\xi$ is tangential to the isodrastic foliation, we conclude that $\varphi^*i_Z\mu$ is exact.
		Hence, by Lemma~\ref{L:zetaE0} there exists $X\in\mathfrak X_{c,\ex}(M,\mu)$ such that $\zeta^{\mathcal E_\circ}_X(\varphi,\alpha)=\zeta^{\mathcal E_\circ}_Z(\varphi,\alpha)$.
		We conclude $\tilde\xi=\zeta^{\mathcal E_\circ}_X(\varphi,\alpha)$ and then $\xi=Tq\cdot\tilde\xi=\zeta^{\mathcal L}_X(\varphi,[\alpha])$ since $q$ is $\Diff_{c,ex}(M,\mu)$ equivariant.
		Hence, $X$ has the desired property.
		
		Suppose conversely $\xi\in T_{(\varphi,[\alpha])}\mathcal L$ is such that 
		\begin{equation}\label{E:p2}
			\omega^{\mathcal L}\bigl(\xi,\zeta^{\mathcal L}_X(\varphi,[\alpha])\bigr)=0,\qquad\textrm{for all $X\in\mathfrak X_{c,\ex}(M,\mu)$.}
		\end{equation}
		We have to show that $\xi=\zeta^{\mathcal L}_Y(\varphi,[\alpha])$ for some vector field $Y\in\mathfrak X(S)$.
		To this end, choose $\tilde\xi\in T_{(\varphi,\alpha)}(\mathcal E_\circ|_L)$ such that $Tq\cdot\tilde\xi=\xi$.	
		By the $\Diff_{c,\ex}(M,\mu)$ equivariance of $q$ and \eqref{E:omegaL} we obtain from \eqref{E:p2}
		\[
			\omega^{\mathcal E_\circ}\bigl(\tilde\xi,\zeta^{\mathcal E_\circ}_X(\varphi,\alpha)\bigr)=0,\qquad\textrm{for all $X\in\mathfrak X_{c,\ex}(M,\mu)$.}
		\]
		Combining this with Lemma~\ref{L:zetaE0} we obtain
		\begin{equation}\label{E:p3}
			\omega^{\mathcal E_\circ}\bigl(\tilde\xi,\zeta^{\mathcal E_\circ}_X(\varphi,\alpha)\bigr)=0,\qquad
			\begin{array}{l}
				\textrm{for all $X\in\mathfrak X_c(M)$ such that}\\\textrm{$X\circ\varphi$ is tangential to $L$ at $\varphi$.}
			\end{array}
		\end{equation}
		Choose vector fields $X_1,\dotsc,X_r\in\mathfrak X_c(M)$ such that $X_1\circ\varphi,\dotsc,X_r\circ\varphi$ 
		induces a basis of the normal space $\bigl(T_\varphi\Emb(S,M)\bigr)|_L/T_\varphi L$.
		As $p$ is $\Diff(M)$ equivariant, $Tp\cdot\zeta^{\mathcal E_\circ}_{X_j}(\varphi,\alpha)=\zeta^{\Emb(S,M)}_{X_j}(\varphi)=X_j\circ\varphi$.
		In view of the nondegenerate pairing in \eqref{E:pairing} we conclude that there exists $\eta\in\ker(T_{(\varphi,\alpha)}q)$ such that
		\[
			\omega^{\mathcal E_\circ}\bigl(\eta,\zeta^{\mathcal E_\circ}_{X_j}(\varphi,\alpha)\bigr)
			=\omega^{\mathcal E_\circ}\bigl(\tilde\xi,\zeta^{\mathcal E_\circ}_{X_j}(\varphi,\alpha)\bigr),\qquad j=1,\dotsc,r.
		\]
		Hence, by replacing the lift $\tilde\xi$ with $\tilde\xi-\eta$, 
		we may assume $\omega^{\mathcal E_\circ}\bigl(\tilde\xi,\zeta^{\mathcal E_\circ}_{X_j}(\varphi,\alpha)\bigr)=0$ for $j=1,\dotsc,r$.
		Combining this with \eqref{E:p3} we obtain
		\[
			\omega^{\mathcal E_\circ}\bigl(\tilde\xi,\zeta^{\mathcal E_\circ}_X(\varphi,\alpha)\bigr)=0,\qquad\textrm{for all $X\in\mathfrak X_c(M)$.}
		\]
		Since the $\Diff(S)$ and $\Diff_c(M)$ orbits on $\mathcal E_\circ$ are mutually symplectic orthogonal, see Theorem~\ref{epdiff} or \cite{GBV12},
		there exists $Y\in\mathfrak X(S)$ such that $\tilde\xi=\zeta_Y^{\mathcal E_\circ}(\varphi,\alpha)$.
		As $q$ is $\Diff(S)$ equivariant this yields $\xi=Tq\cdot\tilde\xi=\zeta_Y^{\mathcal L}(\varphi,[\alpha])$.
		Hence, $Y$ has the desired property.
	\end{proof}

	Using Lemma~\ref{L:cove} it is easy to see that $\Ann_\reg(\mathcal D_\ex)$ is a vector subbundle of finite rank in $T^*_\reg\Gr_S(M)\subseteq\Gr^\aug_S(M)$.
	The quotient bundle,
	\begin{multline*}
		\Gr_S^{[\aug]}(M):=\Gr^\aug_S(M)/\Ann_\reg(\mathcal D_\ex)
		\\
		=\bigl\{(N,[\gamma]):N\in\Gr_S(M),\gamma\in\Gamma(|\Lambda|_N\otimes T^*M|_N)\bigr\},
	\end{multline*}
	consists of submanifolds $N$ augmented by equivalence classes of $\gamma$, taken modulo $\Ann_\reg(\mathcal D_{N,\ex})$.
	The space $\Gr^{[\aug]}_S(M)$ inherits the structure of a Fr\'echet manifold, and the map in \eqref{gtog} factors to a fiber bundle projection $\Gr^{[\aug]}_S(M)\to\Gr_S^\deco(M)$.
	The fiber over $(N,\rho_N)\in\Gr_S^\deco(M)$ is an affine space over the vector space $\Gamma(|\Lambda|_N\otimes\Ann(TN))/\Ann_\reg(\mathcal D_{N,\ex})$.
	Using Lemma~\ref{L:Fcex.int.Emb} we see that $\Ann_\reg(\mathcal D_\ex)\subseteq T^*_\reg\Emb(S,M)$ is a vector subbundle of finite rank too.
	We obtain a sequence of smooth locally trivial fiber bundles,
	\begin{equation}\label{E:augS}
		\frac{T^*_\reg\Emb(S,M)}{\Ann_\reg(\mathcal D_\ex)}\xrightarrow{\Diff(S)}\Gr_S^{[\aug]}(M)\xrightarrow{{\rm affine}}\Gr_S^\deco(M)\xrightarrow{\Omega^1(S;|\Lambda|_S)}\Gr_S(M),
	\end{equation}
	with structure group and typical fibers as indicated over the arrows.

	Restricting everything to an isodrastic leaf $I\subseteq\Gr_S(M)$ we obtain, cf.~\eqref{E:T*regL}, a sequence of smooth fiber bundles
	\begin{equation}\label{E:augSI}
		T^*_\reg L\xrightarrow{\Diff(S)}\Gr_{S,I}^{[\aug]}(M)\xrightarrow{{\rm affine}}\Gr_{S,I}^\deco(M)\xrightarrow{\Omega^1(S;|\Lambda|_S)}I,
	\end{equation}
	where $\Gr_{S,I}^{[\aug]}(M):=\{(N,[\gamma])\in\Gr^{[\aug]}_S(M):N\in I\}$ and $\Gr_{S,I}^\deco(M):=\{(N,\rho_N)\in\Gr_S^\deco(M):N\in I\}$.
	Clearly, 
	\[
		\mathcal M:=\Gr^{[\aug]}_{S,I,\circ}:=\{(N,[\gamma])\in\Gr^{[\aug]}_{S,I}(M):\text{$\iota_N^*\gamma$ nowhere zero}\bigr\}
	\]
	is an open subset in $\Gr_{S,I}^{[\aug]}(M)$.
	As $\mathcal L$ in \eqref{E:varL} is the preimage of $\mathcal M$ under the bundle projection $T^*_\reg L\to\Gr_{S,I}^{[\aug]}(M)$, 
	the latter restricts to a smooth principal $\Diff(S)$ bundle $\mathcal L\to\mathcal M$.
	The left moment map in \eqref{E:moma.vol.codim.one} descends to a $\Diff_{c,\ex}(M,\mu)$ equivariant injective map
	\begin{equation}\label{E:barJLL}
		\bar J^{\mathcal L}_L\colon\mathcal M\to\mathfrak X_{c,\ex}(M,\mu)^*,\qquad
		\langle\bar J^{\mathcal L}_L(N,[\gamma]),X\rangle=\int_N\gamma(X|_N)
	\end{equation}
	where $(N,[\gamma])\in\mathcal M$ and $X\in\mathfrak X_{c,\ex}(M,\mu)$.
	The injectivity follows from Lemma~\ref{L:Fcex.int}.

\subsection{EPDiffvol dual pair for isovolume leaves}

	Let $L\subseteq\Emb(S,M)$ denote the preimage of a $\Diff_c(M,\mu)$ orbit in $\Gr_S(M)$.
	The connected components of $L$ are isovolume leaves.
	We have commuting actions of $\Diff_c(M,\mu)$ and $\Diff(S)$ on $L$.
	We define the regular cotangent bundle $T^*_\reg L$ as in \eqref{E:T*regL}.
	More explicitly,
	\begin{equation}\label{E:T*regL.isovol}
		T_\reg^*L=\bigl\{(\varphi,[\alpha]):\varphi\in L,\alpha\in\Gamma(|\Lambda|_S\otimes\varphi^*T^*M)\bigr\}
	\end{equation}
	where $[\alpha_1]=[\alpha_2]$ if and only if $\alpha_2-\alpha_1=f\mu_\varphi$, cf.~\eqref{E:mu.iso}, with 
	\begin{multline*}
		f\in\Ann\bigl(\img\bigl(\varphi^*\colon H^{\dim S}_c(M;\mathbb R)\to H^{\dim S}(S;\mathbb R)\bigr)\bigr)
		\\
		\subseteq H^0(S;\mathfrak o_S)=\Gamma_\locconst(\mathfrak o_S),
	\end{multline*}
	where the annihilator is with respect to the canonical pairing 
	\[
		H^0(S;\mathfrak o_S)\times H^{\dim S}(S;\mathbb R)\to\mathbb R.
	\]
	The canonical one form $\theta^{T^*_\reg L}$ is invariant under the induced (commuting) actions of $\Diff_c(M,\mu)$ and $\Diff(S)$ on $T^*_\reg L$.
	As in the isodrastic case, $\omega^{T^*_\reg L}=d\theta^{T^*_\reg L}$ is a weakly nondegenerate symplectic form on $T^*_\reg L$.

	We let $T^*_{\reg,\circ}L$ denote the subset of $T^*_\reg L$ consisting of all pairs $(\varphi,[\alpha])$ 
	for which $\alpha$ projects to a nowhere vanishing section of $|\Lambda|_S\otimes T^*S$.
	Clearly, this is an open subset in $T^*_\reg L$ which is invariant under the action of $\Diff_c(M,\mu)$ and $\Diff(S)$.
	Proceeding as before and using Lemma~\ref{L:zetaT*M0}(b) we obtain the following variation of Theorem~\ref{T:dual.pair.codim.one}.

	\begin{theorem}\label{T:dual.pair.codim.one.var}
		Let $\mu$ be a volume form on a smooth manifold $M$, and suppose $S$ is a closed manifold such that $\dim M-\dim S=1$.
		Moreover, let $L\subseteq\Emb(S,M)$ denote the preimage of a $\Diff_c(M,\mu)$ orbit in $\Gr_S(M)$.
		Then the weak symplectic Fr\'echet manifold $\mathcal L=T^*_{\reg,\circ}L$, 
		together with the commuting actions of $\Diff_c(M,\mu)$ and $\Diff(S)$, 
		and the equivariant moment maps associated with the canonical invariant 1-form $\theta^{\mathcal L}$, 
		\begin{equation}\label{E:dp.isovol}
			\mathfrak X_c(M,\mu)^*\xleftarrow{J^{\mathcal L}_L}\mathcal L\xrightarrow{J^{\mathcal L}_R}\Omega^1(S;|\Lambda|_S)\subseteq\mathfrak X(S)^*,
		\end{equation}
		constitute a symplectic dual pair with mutually symplectic orthogonal orbits.
		More explicitly, the moment maps are
		\[
			\langle J^{\mathcal L}_L(\varphi,[\alpha]),X\rangle=\int_S\alpha(X\circ\varphi)
			\qquad\text{and}\qquad
			J^{\mathcal L}_R(\varphi,[\alpha])=\varphi^*\alpha
		\]
		where $X\in\mathfrak X_c(M,\mu)$ and $(\varphi,[\alpha])\in\mathcal L$.
	\end{theorem}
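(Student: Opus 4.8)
The plan is to imitate the proof of Theorem~\ref{T:dual.pair.codim.one} line by line, replacing every invocation of the isodrastic structure by its isovolume counterpart and using part (b) of Lemma~\ref{L:zetaT*M0} in place of part (a). First I would note that, exactly as in the isodrastic case, the moment maps in \eqref{E:dp.vol} (more precisely their analogues for the full group $\Diff_c(M,\mu)$, cf.~Theorem~\ref{T:dual.pair}) restrict to $\iota^{-1}(\mathcal E_\circ|_L)$ and factor through the projection $q\colon\iota^{-1}(\mathcal E_\circ|_L)\to\mathcal L=T^*_{\reg,\circ}L$, because of \eqref{E:thetaL}; that well-definedness of the formulas on equivalence classes follows from the explicit description \eqref{E:T*regL.isovol} together with the fact that, for $X\in\mathfrak X_c(M,\mu)$, the pullback $\varphi^*i_X\mu$ represents a class in the image of $H^{\dim S}_c(M;\mathbb R)\to H^{\dim S}(S;\mathbb R)$ and hence pairs to zero against the relevant annihilator, while $T\varphi\circ Y$ for $Y\in\mathfrak X(S)$ is tangent to the isovolume leaf $L$.

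The heart of the argument is the mutual symplectic orthogonality, proved by the same two symmetric implications as in Theorem~\ref{T:dual.pair.codim.one}. For the first implication, given $\xi\in T_{(\varphi,[\alpha])}\mathcal L$ that is $\omega^{\mathcal L}$-orthogonal to all $\zeta^{\mathcal L}_Y$, $Y\in\mathfrak X(S)$, I would lift to $\tilde\xi$ on $\mathcal E_\circ|_L$ with $Tq\cdot\tilde\xi=\xi$, use \eqref{E:omegaL} and equivariance of $q$ to transfer the orthogonality to $\mathcal E_\circ$, invoke the EPDiff orthogonality (Theorem~\ref{epdiff}, \cite{GBV12}) to get $Z\in\mathfrak X_c(M)$ with $\tilde\xi=\zeta^{\mathcal E_\circ}_Z$, observe that $Tp\cdot\tilde\xi=Z\circ\varphi$ is tangent to the isovolume leaf $L$ so that $\varphi^*i_Z\mu$ represents a class in the image of $H^*_c(M)\to H^*(N)$, and then apply Lemma~\ref{L:zetaT*M0}(b) to replace $Z$ by some $X\in\mathfrak X_c(M,\mu)$ with $\zeta^{\mathcal E_\circ}_X(\varphi,\alpha)=\zeta^{\mathcal E_\circ}_Z(\varphi,\alpha)$; pushing back down through $q$ gives $\xi=\zeta^{\mathcal L}_X(\varphi,[\alpha])$. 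The second implication is handled exactly as before: lift $\xi$, transfer the orthogonality-to-$\mathfrak X_c(M,\mu)$ condition to $\mathcal E_\circ$, promote it via Lemma~\ref{L:zetaE0} (suitably with the isovolume version, or directly via Lemma~\ref{L:zetaT*M0}(b)) to orthogonality against all $X\in\mathfrak X_c(M)$ whose restriction is tangent to $L$, then correct the lift by an element of $\ker(Tq)$ using the nondegenerate finite-dimensional pairing \eqref{E:pairing} so that it is also orthogonal to a chosen basis $X_1\circ\varphi,\dots,X_r\circ\varphi$ of the normal space $(T_\varphi\Emb(S,M))|_L/T_\varphi L$, obtain orthogonality against all of $\mathfrak X_c(M)$, and conclude via Theorem~\ref{epdiff} that $\tilde\xi=\zeta^{\mathcal E_\circ}_Y(\varphi,\alpha)$ for some $Y\in\mathfrak X(S)$, hence $\xi=\zeta^{\mathcal L}_Y(\varphi,[\alpha])$.

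The one point needing slightly more care than in the isodrastic case — and the step I expect to be the main obstacle — is verifying that the equivalence relation in \eqref{E:T*regL.isovol} is exactly $\ker(Tq)$ and that the finite-dimensional pairing \eqref{E:pairing}, which underlies the lift-correction argument, remains nondegenerate for the isovolume leaf $L$. This is because for isovolume leaves the annihilator $\Ann_\reg(T_\varphi L)$ is the possibly-smaller space cut out by the image of $H^{\dim S}_c(M)\to H^{\dim S}(S)$, not all of $H^0(S;\mathfrak o_S)$, so one must check that $T_\varphi L=\mathcal D_{\varphi,\mu}$ has finite codimension with annihilator of matching rank; this follows from Lemma~\ref{L:cmu} (via its $\Emb$ version) together with the fact that every diffeomorphism in $\Diff(N)_0$ extends to one in $\Diff_c(M,\mu)$, so that the normal space to $L$ is the pullback of the normal space to the isovolume leaf in $\Gr_S(M)$, of finite dimension \eqref{E:codim.Fcmu}. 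Once this bookkeeping is in place, the proof of the lemma preceding \eqref{E:T*regL} and of the nondegeneracy of \eqref{E:pairing} go through verbatim, and everything else is a routine transcription. I would therefore write the proof as: ``Proceed exactly as in the proof of Theorem~\ref{T:dual.pair.codim.one}, using the description \eqref{E:T*regL.isovol}, replacing $\Diff_{c,\ex}(M,\mu)$, $\mathfrak X_{c,\ex}(M,\mu)$, and Lemma~\ref{L:zetaT*M0}(a) by $\Diff_c(M,\mu)$, $\mathfrak X_c(M,\mu)$, and Lemma~\ref{L:zetaT*M0}(b), respectively, and replacing the isodrastic foliation by the isovolume foliation throughout.''
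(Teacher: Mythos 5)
Your proposal is correct and takes essentially the same route as the paper, whose proof of this theorem is literally the remark that one proceeds as in Theorem~\ref{T:dual.pair.codim.one}, using the description \eqref{E:T*regL.isovol} of $T^*_\reg L$ for isovolume leaves and Lemma~\ref{L:zetaT*M0}(b) in place of part (a). The bookkeeping you single out (finite codimension of $\mathcal D_{\varphi,\mu}$ with annihilator of matching rank, hence nondegeneracy of the pairing \eqref{E:pairing} and well-definedness of the moment maps on classes) is exactly what the paper subsumes in ``proceeding as before,'' and your verification of it via Lemma~\ref{L:cmu} is sound.
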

	
	\begin{example}[Plane curves]\label{E:pc}
		As in Example~\ref{E:R2.conn} we consider $M=\mathbb R^2$ endowed with $\mu=dxdy$.
		Recall that $\Diff_c(M)$ is connected \cite{S59} and, thus, $\Diff_c(M,\mu)$ is connected too.
		Hence, $\Diff_c(M,\mu)=\Diff_c(M,\mu)_0=\Diff_{c,\ex}(M,\mu)$ is the group of compactly supported Hamiltonian diffeomorphisms, with Lie algebra $C_c^\infty(\mathbb R^2)$. 
		Let  $S=S^1$. 
		Since the distributions $\mathcal D_\ex$ and $\mathcal D_\mu$ coincide, 
		the symplectic dual pairs in Theorems~\ref{T:dual.pair.codim.one} and \ref{T:dual.pair.codim.one.var} coincide in this case.
		Moreover, the preimage $L$ of an isodrastic leaf in the nonlinear Grassmannian is the manifold $\Emb^a(S^1,\mathbb R^2)$ of embeddings enclosing a fixed area $a>0$.
		Thus, we have a symplectic dual pair 
		\begin{equation}\label{E:mica}
			C_c^\infty(\mathbb R^2)^*\xleftarrow{J^{\mathcal L}_L}T^*_{\reg,\circ}\Emb^a(S^1,\mathbb R^2)
			\xrightarrow{J^{\mathcal L}_R}\Omega^1(S^1;|\Lambda|_{S^1})\subseteq\mathfrak X(S^1)^*
		\end{equation}
		for the natural actions of $\Ham_c(\mathbb R^2)$ and $\Diff(S^1)$ on $\mathcal L=T^*_{\reg,\circ}\Emb^a(S^1,\mathbb R^2)$.
		Here $L$ and $\mathcal L$ have two and four connected components, respectively.
	\end{example}

\section{Coadjoint orbits for codimension one}\label{S:orbits.codim.one}

	We perform symplectic reduction for the right leg of the dual pairs in Theorems~\ref{T:dual.pair.codim.one} and \ref{T:dual.pair.codim.one.var},
	to get coadjoint orbits of $\Diff_{c,\ex}(M,\mu)$ and $\Diff_c(M,\mu)$, respectively.
	Reduction at the level zero is not possible here, since the right moment maps in these dual pairs do not take the value zero.
	Therefore we will restrict, as in Section~\ref{SS:EPDiff.nonzero}, to 1-dimensional manifolds $S$ and nowhere vanishing $\rho$.
	We will see that this choice leads to a reduced symplectic space that can be endowed with a natural Fr\'echet manifold structure. 

	We are in the codimension one case, so $M$ is a surface and the volume form $\mu$ a symplectic form. 
	Hence, the group $\Diff_{c,\ex}(M,\mu)$ is the group of compactly supported Hamiltonian diffeomorphisms $\Ham_c(M)$ with Lie algebra $\ham_c(M)=C^\infty_c(M)$, 
	and the isodrastic foliation from Section~\ref{iso} is the original isodrastic foliation of Weinstein's \cite{W90}. 
 
\subsection{Augmented nonlinear Grassmannians over isodrasts of curves}\label{SS:61}

	Let $I$ be an isodrastic leaf, i.e., a $\Ham_c(M)$ orbit in $\Gr_{S}(M)$.
	Let $L$ denote the preimage of $I$ in $\Emb(S,M)$.
	We recall that the dual pair \eqref{E:dp.vol.codim.one} is defined on the open subset $\mathcal L=T^*_{\reg,\circ}L$ of the regular cotangent bundle $T^*_\reg L$ in \eqref{explit},
	characterized by $\varphi^*\alpha\in\Omega^1(S;|\Lambda|_S)$ being nowhere zero.
	Let $\rho\in\Omega^1(S;|\Lambda|_S)\subseteq\mathfrak X(S)^*$ be a nowhere zero 1-form density.
	The reduced symplectic manifold at $\rho$ along the right leg of the dual pair, $\mathcal M _\rho:=(J_R^\mathcal L)^{-1}(\mathcal O_\rho)/\Diff(S)$, can be identified with
	\begin{equation}\label{tw}
		\mathcal M _\rho=\bigl\{(N,[\gamma])\in\Gr_{S,I}^{[\aug]}(M):(N,\iota_N^*\gamma)\cong(S,\rho)\bigr\}.
	\end{equation}
	Clearly, $\mathcal M _\rho$ is the preimage of
	\[
		\Gr_{S,I,\rho}^\deco(M)
		:=\bigl\{(N,\rho_N)\in\Gr_{S,I}^\deco(M):(N,\rho_N)\cong(S,\rho)\bigr\}
	\]
	under the middle fiber bundle projection in \eqref{E:augSI}, see also \eqref{gero2}.
	Recall from Section~\ref{SS:EPDiff.nonzero} that $\mathcal O_\rho$ is a smooth submanifold of $\Omega^1(S;|\Lambda|_S)$.
	Restricting the identification in \eqref{gero} to the submanifold $I$ in $\Gr_S(M)$, we see that $\Gr_{S,I,\rho}^\deco(M)$ is a smooth submanifold in $\Gr_{S,I}^\deco(M)$.
	We obtain a sequence of smooth fiber bundles,
	\begin{equation}\label{E:2}
		(J^{\mathcal L}_R)^{-1}(\mathcal O_\rho)\xrightarrow{\Diff(S)}\mathcal M _\rho\xrightarrow{{\rm affine}}\Gr_{S,I,\rho}^\deco(M)\xrightarrow{\mathcal O_\rho}I
	\end{equation}
	with structure group and typical fibers as indicated.
	Each space in this sequence is a smooth submanifold in the corresponding term in \eqref{E:augSI}.

	The left moment map in \eqref{E:dp.vol.codim.one} restricts and descends to a $\Diff_{c,\ex}(M,\mu)$ equivariant injective map
	\begin{equation}\label{barj}
		\bar J^{\mathcal L}_L\colon\mathcal M _\rho\to\ham_c(M)^*=C^\infty_c(M)^*,\quad\langle\bar J^{\mathcal L}_L(N,[\gamma]),X\rangle=\int_N\gamma(X|_N),
	\end{equation}
	where $(N,[\gamma])\in\mathcal M_\rho$ and $X\in\ham_c(M)$.
	The injectivity follows from the injectivity of the map in \eqref{E:barJLL}. 
	Using Theorem~\ref{T:dual.pair.codim.one} and proceeding as in \cite[Proposition~2]{HV04} we see that the $\Ham_c(M)$ action on $\mathcal M _\rho$ is locally transitive.
	Moreover, the pullback of $\omega^\KKS$ coincides with the reduced symplectic form on $\mathcal M _\rho$.
	This shows

	\begin{corollary}\label{T:new}
		Given an isodrastic leaf $I\subseteq\Gr_S(M)$ and a nowhere zero $\rho\in\Omega^1(S;|\Lambda|_S)$, 
		then each connected component of the augmented nonlinear Grassmannian $\mathcal M _\rho$ in \eqref{tw},
		endowed with its reduced symplectic form and canonical $\Ham_c(M)$ action, 
		is symplectomorphic to a coadjoint orbit of the Hamiltonian group $\Ham_c(M)$
		via the equivariant moment map \eqref{barj}.
	\end{corollary}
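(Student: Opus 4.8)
The plan is to read this as the standard consequence of right-leg symplectic reduction in the dual pair of Theorem~\ref{T:dual.pair.codim.one}, assembling it from ingredients that are essentially in place. First I would pin down the smooth structure on $\mathcal M_\rho$. Since $\dim S=1$ and $\rho$ is nowhere zero, $\mathcal O_\rho$ is a splitting smooth submanifold of $\Omega^1(S;|\Lambda|_S)$ of finite codimension, as recalled in Section~\ref{SS:EPDiff.nonzero}; feeding this into the tower \eqref{E:augSI} produces the tower of smooth locally trivial fiber bundles \eqref{E:2}, so that $(J^{\mathcal L}_R)^{-1}(\mathcal O_\rho)\to\mathcal M_\rho$ is a principal $\Diff(S)$-bundle and $\mathcal M_\rho$ is a splitting submanifold of $\Gr^{[\aug]}_{S,I}(M)$ carrying a natural Fr\'echet manifold structure.

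Next I would produce the reduced data. By $\Diff(S)$-invariance, both $\theta^{\mathcal L}$ and the left moment map \eqref{E:moma.vol.codim.one} descend along $(J^{\mathcal L}_R)^{-1}(\mathcal O_\rho)\to\mathcal M_\rho$; the descent of the left moment map is \eqref{barj}, whose injectivity is inherited from \eqref{E:barJLL} and ultimately from Lemma~\ref{L:Fcex.int}. The dual-pair property in Theorem~\ref{T:dual.pair.codim.one} says precisely that $\Diff(S)$ acts infinitesimally transitively on the fibres of $J^{\mathcal L}_L$ and $\Ham_c(M)$ acts infinitesimally transitively on the fibres of $J^{\mathcal L}_R$; combined with the finite codimension of $\mathcal O_\rho$ and the nondegenerate pairing \eqref{E:pairing}, this forces the pullback of $\omega^{\mathcal L}$ to $(J^{\mathcal L}_R)^{-1}(\mathcal O_\rho)$ to have null distribution exactly the $\Diff(S)$-orbit directions, so it descends to a weak symplectic form $\omega_\rho$ on $\mathcal M_\rho$, admitting a concrete expression in the spirit of \eqref{E:orr}. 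Infinitesimal transitivity of $\Ham_c(M)$ on the $J^{\mathcal L}_R$-level sets passes to $\mathcal M_\rho$, and I would upgrade it to genuine local transitivity by the explicit path construction of \cite[Proposition~2]{HV04}, adapted to augmented curves constrained to the isodrastic leaf $I$ and with fixed $1$-form-density type $\rho$.

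With all this in hand the conclusion is routine: local transitivity makes every $\Ham_c(M)$-orbit in $\mathcal M_\rho$ open, so each connected component of $\mathcal M_\rho$ is a single $\Ham_c(M)$-orbit; by equivariance and injectivity of \eqref{barj} the stabiliser in $\Ham_c(M)$ of a point of such a component coincides with the stabiliser of its image, so \eqref{barj} restricts to a $\Ham_c(M)$-equivariant bijection of that component onto the coadjoint orbit through the image point, and since it intertwines $\omega_\rho$ with the Kirillov--Kostant--Souriau form $\omega^{\KKS}$ --- the usual identity relating a reduced symplectic form with a coadjoint-orbit form through the descended left moment map --- it is a symplectomorphism. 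The step I expect to be the real work is the passage from infinitesimal to local transitivity of the $\Ham_c(M)$-action in this infinite-dimensional setting, since the \cite{HV04}-type argument must now be carried out within the leaf $I$ and compatibly with the constraint $(N,\iota_N^*\gamma)\cong(S,\rho)$; a close second is checking that the descended two-form is genuinely weakly nondegenerate rather than merely presymplectic, which is exactly where $\codim\mathcal O_\rho<\infty$ and the finite-dimensional pairing \eqref{E:pairing} are used.
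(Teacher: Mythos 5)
Your proposal is correct and follows essentially the same route as the paper: the smooth structure on $\mathcal M_\rho$ via the bundle tower \eqref{E:2}, the descended injective moment map \eqref{barj}, local transitivity obtained from the dual pair of Theorem~\ref{T:dual.pair.codim.one} by the argument of \cite[Proposition~2]{HV04}, and the identification of the reduced form with the pullback of $\omega^\KKS$. The only difference is that you spell out the weak nondegeneracy of the descended two-form via the pairing \eqref{E:pairing}, a point the paper leaves implicit in its reduction setup.
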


	A similar statement remains true if the Hamiltonian group is replaced with the symplectic group $\Symp_c(M)=\Diff_c(M,\mu)$,
	and isodrastic leaves (orbits of $\Ham_c(M)$) by orbits of $\Symp_c(M)$.
	The reduced symplectic manifold at $\rho$ along the right leg of the dual pair in Theorem~\ref{T:dual.pair.codim.one.var}
	can be identified with the augmented nonlinear Grassmannian
	\begin{equation}\label{ni}
		\mathcal N_\rho:=\bigl\{(N,[\gamma]):N\in I,\gamma\in\Gamma(|\Lambda|_N\otimes T^*M|_N),(N,\iota_N^*\gamma)\cong(S,\rho)\bigr\},
	\end{equation}
	with $I$ denoting a $\Symp_c(M)$ orbit in $\Gr_S(M)$.
	This time the class of $\gamma$ is considered modulo $\Ann_\reg(\mathcal D_{N,\mu})$, i.e.,
	modulo the image under the isomorphism $\mu_N$ in \eqref{E:mu.Gr2} of the annihilator $\Ann(\iota_N^*H_c^1(M;\mathbb R))\subseteq H^0(N;\mathfrak o_N)$
	with respect to the canonical pairing $H^0(N;\mathfrak o_N)\times H^1(N;\mathbb R)\to\mathbb R$.
	This permits to show that $\Ann_\reg(\mathcal D_\mu)$ is a vector subbundle of finite rank in $T^*_\reg\Gr_S(M)$.
	One can then proceed exactly as in the isodrastic case to conclude that $\mathcal N _\rho$ is a Fr\'echet manifold in a natural way. 
	The left moment map in \eqref{E:dp.isovol} restricts and descends to a $\Symp_c(M)$ equivariant injective map
	\begin{equation}\label{barj3}
		\bar J^{\mathcal L}_L\colon\mathcal N _\rho\to\mathfrak X_c(M,\mu)^*,\qquad\langle\bar J^{\mathcal L}_L(N,[\gamma]),X\rangle=\int_N\gamma(X|_N),
	\end{equation}
	Using Theorem~\ref{T:dual.pair.codim.one.var} and proceeding as in \cite[Proposition~2]{HV04} we see that the $\Symp_c(M)$ action on $\mathcal N_\rho$ is locally transitive.
	Moreover, the pullback of $\omega^\KKS$ is the reduced symplectic form on $\mathcal N_\rho$.
	We thus obtain

	\begin{corollary}\label{T:new2}
		Given a $\Symp_c(M)$ orbit $I$ in $\Gr_S(M)$ and a nowhere zero $\rho\in\Omega^1(S;|\Lambda|_S)$, 
		then each connected component of $\mathcal N_\rho$ in \eqref{ni}, endowed with its reduced symplectic form and canonical $\Symp_c(M)_0$ action, 
		is symplectomorphic to a coadjoint orbit of the symplectomorphism group $\Symp_c(M)_0$ via the equivariant moment map in \eqref{barj3}.
		Moreover, this moment map identifies certain unions of connected components of $\mathcal N_\rho$ with coadjoint orbits of the full group, $\Symp_c(M)$.
	\end{corollary}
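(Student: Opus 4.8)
The plan is to follow the proof of Corollary~\ref{T:new} essentially verbatim, replacing the Hamiltonian group $\Ham_c(M)=\Diff_{c,\ex}(M,\mu)$, the dual pair of Theorem~\ref{T:dual.pair.codim.one}, and the isodrastic annihilator $\Ann_\reg(\mathcal D_\ex)$ by the symplectic group $\Symp_c(M)=\Diff_c(M,\mu)$, the dual pair of Theorem~\ref{T:dual.pair.codim.one.var}, and the isovolume annihilator $\Ann_\reg(\mathcal D_\mu)$. The one new point in setting up the objects is the claim that $\Ann_\reg(\mathcal D_\mu)$ is a vector subbundle of finite rank in $T^*_\reg\Gr_S(M)$, and likewise in $T^*_\reg\Emb(S,M)$: by Lemma~\ref{L:cmu} its rank at $N$ equals $\dim\coker\bigl(\iota_N^*\colon H^{\dim N}_c(M;\mathbb R)\to H^{\dim N}(N;\mathbb R)\bigr)$, and since $\iota_N^*$ depends only on the homotopy class of the inclusion (it is a flat section of the relevant $\mathrm{Hom}$-local system along any path in $\Gr_S(M)$), this rank is locally constant on $\Gr_S(M)$; local triviality then follows as in the isodrastic case, using that $\Ann_\reg(\mathcal D_\mu)$ contains $\Ann_\reg(\mathcal D_\ex)$. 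Combining this with the fact, recalled in Section~\ref{SS:EPDiff.nonzero}, that $\mathcal O_\rho$ is a splitting submanifold of $\Omega^1(S;|\Lambda|_S)$ whose codimension is the number of connected components of $S$, one obtains exactly as in \eqref{E:augSI} and \eqref{E:2} a tower of locally trivial smooth fiber bundles
\[
	(J^{\mathcal L}_R)^{-1}(\mathcal O_\rho)\xrightarrow{\Diff(S)}\mathcal N_\rho\xrightarrow{{\rm affine}}\Gr_{S,I,\rho}^\deco(M)\xrightarrow{\mathcal O_\rho}I,
\]
which endows $\mathcal N_\rho$ with a Fr\'echet manifold structure carrying the Marsden--Weinstein reduced symplectic form of the dual pair \eqref{E:dp.isovol} at the level $\rho$.

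By construction $\mathcal N_\rho=(J^{\mathcal L}_R)^{-1}(\mathcal O_\rho)/\Diff(S)$, so the left moment map $J^{\mathcal L}_L$ of \eqref{E:dp.isovol} is $\Diff(S)$-invariant on $(J^{\mathcal L}_R)^{-1}(\mathcal O_\rho)$ and descends to the $\Symp_c(M)$-equivariant map $\bar J^{\mathcal L}_L$ of \eqref{barj3}; its injectivity is checked just as for \eqref{barj} in Corollary~\ref{T:new}, the functional $X\mapsto\int_N\gamma(X|_N)$ determining first $N$ and then the class $[\gamma]$ modulo $\Ann_\reg(\mathcal D_{N,\mu})$. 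The core of the argument is transitivity: Theorem~\ref{T:dual.pair.codim.one.var} says the $\Diff(S)$- and $\Diff_c(M,\mu)$-orbits on $\mathcal L=T^*_{\reg,\circ}L$ are mutually symplectic orthogonal, so $\Diff_c(M,\mu)$ acts infinitesimally transitively on the level sets of $J^{\mathcal L}_R$, which descends to infinitesimal transitivity of the $\Symp_c(M)$ action on $\mathcal N_\rho$. One then upgrades this to local transitivity by the argument of \cite[Proposition~2]{HV04}: a tangent vector to $\mathcal N_\rho$ at $(N,[\gamma])$ is represented by an infinitesimal variation of the curve $N$ together with its augmentation; the variation of $N$ is realized by an isotopy in $\Diff_c(M,\mu)_0$ via the isotopy extension theorem and Proposition~\ref{P:below}(b), while the remaining variation of the augmentation along $N$ is produced, using Lemma~\ref{L:zetaT*M0}(b), by a divergence-free field supported near $N$; integrating a suitable time dependent such field yields a path in $\Symp_c(M)_0$ inducing the prescribed direction. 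Together with the injectivity of $\bar J^{\mathcal L}_L$, local transitivity shows that every $\Symp_c(M)_0$-orbit in $\mathcal N_\rho$ is open and closed, hence a union of connected components, and that $\bar J^{\mathcal L}_L$ maps each such orbit bijectively and equivariantly onto a coadjoint orbit of $\Symp_c(M)_0$. Finally, the pullback $(\bar J^{\mathcal L}_L)^*\omega^\KKS$ agrees with the reduced symplectic form on $\mathcal N_\rho$ by the general dual-pair fact, verified from the invariant $1$-form $\theta^{\mathcal L}$ exactly as in the proofs of Theorems~\ref{T:t} and~\ref{T:EPDiff.red1dim} and of Corollary~\ref{T:new}, cf.~\cite[Theorem~2.3.12]{MMOPR07}.

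For the full group, since $\Symp_c(M)_0$ is normal in $\Symp_c(M)$ each $\Symp_c(M)$-orbit in $\mathcal N_\rho$ is a union of $\Symp_c(M)_0$-orbits, hence a union of connected components, and $\bar J^{\mathcal L}_L$ identifies it with a coadjoint orbit of $\Symp_c(M)$. I expect the main obstacle to be, as in all the preceding results, the passage from infinitesimal to local transitivity in the Fr\'echet setting --- here with the extra wrinkle, compared with the codimension $\geq2$ case of Theorem~\ref{T:dual.pair}, that only divergence-free rather than exact divergence-free vector fields are available, so the full strength of Proposition~\ref{P:below}(b) (to move $N$ inside its isovolume leaf) and of Lemma~\ref{L:zetaT*M0}(b) (for the normal directions of the augmentation) must be brought to bear.
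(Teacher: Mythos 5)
Your proposal is correct and follows essentially the same route as the paper, which proves this corollary by first noting that $\Ann_\reg(\mathcal D_\mu)$ is a finite-rank vector subbundle (via the identification of $\Ann_\reg(\mathcal D_{N,\mu})$ with $\mu_N\bigl(\Ann(\iota_N^*H^1_c(M;\mathbb R))\bigr)$) and then ``proceeding exactly as in the isodrastic case'' of Corollary~\ref{T:new}: the smooth structure on $\mathcal N_\rho$ from the analogue of \eqref{E:2}, injectivity of \eqref{barj3}, local transitivity of the $\Symp_c(M)$ action as in \cite[Proposition~2]{HV04} using Theorem~\ref{T:dual.pair.codim.one.var}, and the pullback of $\omega^\KKS$ being the reduced symplectic form. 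One small correction: since $\mathcal D_\ex\subseteq\mathcal D_\mu$, the inclusion of annihilators is $\Ann_\reg(\mathcal D_\mu)\subseteq\Ann_\reg(\mathcal D_\ex)$, not the reverse as you wrote, and it is precisely this direction that lets you realize $\Ann_\reg(\mathcal D_\mu)$ as a locally-constant-rank subbundle of the finite-rank bundle $\Ann_\reg(\mathcal D_\ex)$.
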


	\begin{example}[Connected curves in the plane]\label{vort}
		In the situation of Example~\ref{E:pc} the two corollaries above yield the same result.
		One obtains reduced symplectic manifolds $\mathcal M_\rho^a$ of augmented plane curves enclosing a fixed area $a>0$ 
		as coadjoint orbits of $\Ham_c(\mathbb R^2)=\Symp_c(\mathbb R^2)_0$. 
	\end{example}

	\begin{example}[Connected curves in the 2-torus]
		Applying Corollary~\ref{T:new2} to the isovolume leaf through the meridian $N_0$ in Example~\ref{E:T2.conn}, 
		we see that the corresponding connected components of $\mathcal G_\rho$ in \eqref{E:grh} are coadjoint orbits of $\Symp(T^2)_0$. 
	\end{example}

\subsection{Prequantization}

	In this section we present a sufficient condition for the prequantizability of the coadjoint orbits of augmented curves described in the Corollaries~\ref{T:new} and \ref{T:new2}. 
	Therefore, let $S=\bigsqcup_{i=1}^kS_i$ be a disjoint union of circles and $\rho$ a nowhere zero 1-form density on $S$.
	Let $\ell_\rho=(l_1,\dotsc,l_k)$ with $l_i$ the circle length measured with the restriction $\rho_i\in\Omega^1(S_i;|\Lambda|_{S_i})$.
		
	\begin{theorem}\label{P:preq}
		Suppose $l_i^2\in\mathbb N$ for $i=1,\dotsc,k$.
		Then the symplectic manifold $\mathcal M_\rho$ in \eqref{tw} is prequantizable, for every isodrastic leaf $I$ in $\Gr_S(M)$.
		Moreover, the symplectic manifold $\mathcal N_\rho$ in \eqref{ni} is prequantizable, for every $\Symp_c(M)$ orbit $I$ in $\Gr_S(M)$.
		In particular, the coadjoint $\Ham_c(M)$ orbits in Corollary~\ref{T:new} and the coadjoint $\Symp_c(M)$ orbits in Corollary~\ref{T:new2} are prequantizable, for such a $\rho$.
	\end{theorem}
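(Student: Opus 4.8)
The plan is to produce the prequantum line bundles by prequantum reduction along the right legs of the dual pairs of Theorems~\ref{T:dual.pair.codim.one} and~\ref{T:dual.pair.codim.one.var}. Both $\mathcal M_\rho$ of~\eqref{tw} and $\mathcal N_\rho$ of~\eqref{ni} arise as reductions of the same kind of total space $\mathcal L=T^*_{\reg,\circ}L$ at the coadjoint orbit $\mathcal O_\rho$ of $\Diff(S)$, the two cases differing only in the choice of leaf $I$ and in the left acting group, so it suffices to argue for one of them. The key point is that $\omega^{\mathcal L}=d\theta^{\mathcal L}$ is exact, so $(\mathcal L,\omega^{\mathcal L})$ is prequantized by the trivial Hermitian line bundle $\mathcal L\times\mathbb C$ with the connection whose connection form is (with the paper's normalization) a real multiple of $i\theta^{\mathcal L}$. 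Since $\theta^{\mathcal L}$ is $\Diff(S)$-invariant, the $\Diff(S)$-action lifts to $\mathcal L\times\mathbb C$ trivially on the fibres by connection-preserving unitary gauge transformations, and—up to the usual sign convention—the associated Kostant--Souriau moment map is precisely the right moment map $J^{\mathcal L}_R$ of~\eqref{E:moma.vol.codim.one}.

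I would then apply the standard descent criterion for prequantum bundles under reduction at a nonzero level. As $\mathcal O_\rho$ is a single $\Diff(S)$-orbit, $\mathcal M_\rho=(J^{\mathcal L}_R)^{-1}(\rho)/\Diff(S,\rho)$, and the quotient $\bigl(\mathcal L\times\mathbb C\bigr)\big|_{(J^{\mathcal L}_R)^{-1}(\rho)}\big/\Diff(S,\rho)$ descends to a prequantum line bundle for the reduced symplectic form once the fibrewise $\Diff(S,\rho)$-action is twisted by a character $\chi_\rho\colon\Diff(S,\rho)\to U(1)$ whose differential is $Y\mapsto 2\pi i\,\langle\rho,Y\rangle$ on the Lie algebra $\mathfrak X(S,\rho)$ of $\Diff(S,\rho)$. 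The smoothness of the reduced spaces and the fact that $(J^{\mathcal L}_R)^{-1}(\mathcal O_\rho)\to\mathcal M_\rho$ is a principal $\Diff(S)$-bundle—which legitimizes this quotient—are already available from Sections~\ref{SS:EPDiff.nonzero} and~\ref{SS:61}, cf.~\eqref{E:2}. So the whole statement reduces to the existence of $\chi_\rho$.

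Finally I would verify that $\chi_\rho$ exists exactly when $l_i^2\in\mathbb N$. By~\eqref{E:semidirect}, $\Diff(S,\rho)\cong(S^1)^k\rtimes\mathfrak S_k(\ell_\rho)$, whose identity component is the torus $(S^1)^k=\prod_{i=1}^k\Rot(S_i)$, so $\mathfrak X(S,\rho)=\bigoplus_{i=1}^k\mathbb R\,\partial_{s_i}$ with $s_i$ arclength on $S_i$ for the metric determined by $\rho_i=|\nu_i|\otimes\nu_i$. The canonical generator of the $i$-th circle $\Rot(S_i)\cong\mathbb R/\mathbb Z$ is $l_i\partial_{s_i}$, and a short computation gives $\langle\rho_i,l_i\partial_{s_i}\rangle=l_i\int_{S_i}|\nu_i|=l_i^2$. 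Hence $Y\mapsto 2\pi i\langle\rho,Y\rangle$ exponentiates over $(S^1)^k$ precisely when every $l_i^2\in\mathbb Z$, and once it does it extends over the whole semidirect product by the trivial character on $\mathfrak S_k(\ell_\rho)$, because $Y\mapsto\langle\rho,Y\rangle$ is $\mathfrak S_k(\ell_\rho)$-invariant (that group only permutes circles of equal length and hence equal $l_i^2$). Since $l_i>0$, this condition is exactly $l_i^2\in\mathbb N$. Prequantum reduction now produces prequantum line bundles on $\mathcal M_\rho$ and $\mathcal N_\rho$, hence on each connected component; combined with the symplectomorphisms of Corollaries~\ref{T:new} and~\ref{T:new2} and the invariance of prequantizability under symplectomorphisms, this gives the last assertion.

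The step I expect to demand the most work is making prequantum reduction rigorous in the Fréchet category—verifying that the twisted $\Diff(S,\rho)$-quotient of $\bigl(\mathcal L\times\mathbb C\bigr)|_{(J^{\mathcal L}_R)^{-1}(\rho)}$ is again a smooth Hermitian line bundle with connection whose curvature is the reduced symplectic form—together with pinning down normalizations so that the resulting condition is $l_i^2\in\mathbb N$ on the nose rather than some rational multiple of it. A more computational alternative would bypass the character and check directly that the reduced symplectic form has integral periods when $l_i^2\in\mathbb Z$, the only essential contributions coming from the rotation subgroups $\Rot(S_i)$; but the character formulation is cleaner and avoids identifying those cycles.
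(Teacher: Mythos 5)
Your proposal is correct and is essentially the paper's proof in different packaging: the paper builds the prequantum circle bundle by hand as $\tilde{\mathcal P}_\rho=\mathcal P_\rho/\tilde H$ with $\mathcal P_\rho=(J_R^{\mathcal L})^{-1}(\rho)$, and the subgroup $\tilde H$ it constructs from the lattice computation $\theta_\rho(\zeta_{Y_i})=\langle\rho,Y_i\rangle=l_i^2$ is precisely the kernel of your character $\chi_\rho$, so your associated bundle $\mathcal P_\rho\times_{\chi_\rho}S^1$ is the same object. The essential ingredients coincide in both arguments — the identification $\Diff(S,\rho)\cong(S^1)^k\rtimes\mathfrak S_k(\ell_\rho)$, the pairing of $\rho$ with the lattice generators giving $l_i^2$, and the trivial extension over the permutation part — the only difference being that the paper verifies the descent of $\theta_\rho$ to a connection with curvature $\omega_\rho$ explicitly instead of quoting a prequantum reduction theorem.
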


	\begin{proof}
		Suppose $I$ is an isodrastic leaf in $\Gr_S(M)$ and let $L$ denote its preimage in $\Emb(S,M)$. 
		The principal $\Diff(S)$ bundle in \eqref{E:2} restricts to a principal $\Diff(S,\rho)$ bundle $p\colon\mathcal P_\rho\to\mathcal M_\rho$, where
		\[
			\mathcal P_\rho:=(J_R^{\mathcal L})^{-1}(\rho)=\{(\varphi,[\alpha])\in T^*_\reg L:\varphi^*\alpha=\rho\}\subseteq\mathcal L=T^*_{\reg,\circ}L.
		\]
		Moreover, the canonical 1-form $\theta^{\mathcal L}$ restricts to an invariant 1-form $\theta_\rho$ on $\mathcal P_\rho$ such that $d\theta_\rho=p^*\omega_\rho$,
		where $\omega_\rho$ denotes the reduced symplectic form on $\mathcal M_\rho$.
	
		Since $\Diff(S,\rho)_0\cong(S^1)^k$ is compact and abelian, the kernel of the exponential map $\exp\colon\mathfrak X(S,\rho)\to\Diff(S,\rho)$ is a lattice in $\mathfrak X(S,\rho)$.
		This lattice is generated by $Y_1,\dotsc,Y_k$ where $Y_i\in\mathfrak X(S_i,\rho_i)\subseteq\mathfrak X(S,\rho)$ denotes the vector field supported on $S_i$ 
		such that $L_{Y_i}\rho_i=0$ and $\int_{S_i}i_{Y_i}\rho=l_i^2$.
		Equivalently, this can be characterized as the positively oriented vector field of constant length $l_i$ on $S_i$, 
		with respect to the orientation and the Riemannian metric corresponding to $\rho_i$.
		Hence, for $(\varphi,[\alpha])\in\mathcal P_\rho$ we have
		\begin{equation}\label{repro}
			\theta_\rho\bigl(\zeta_{Y_i}(\varphi,[\alpha])\bigr)
			=\langle J_R^{\mathcal L}(\varphi,[\alpha]),Y_i\rangle
			=\langle\rho,Y_i\rangle
			=\int_Si_{Y_i}\rho
			=l_i^2,
		\end{equation}
		where $\zeta_Y$ denotes the infinitesimal principal action of $Y\in\mathfrak X(S,\rho)$ on $\mathcal P_\rho$. 
		
		The codimension one Lie subalgebra
		\[
			\mathfrak h:=\{Y\in\mathfrak X(S,\rho):\theta_\rho(\zeta_Y)=0\}
		\]
		is stable under the adjoint $\Diff(S,\rho)$ action, by invariance of $\theta_\rho$.
		Let $n\in\mathbb N$ be the greatest common divisor of $l_1^2,\dotsc,l_k^2$, which means that $l_i^2=nm_i$ for $m_1,\dotsc,m_k$ coprime integers.
		Together with \eqref{repro} this yields
		\[
			 \mathfrak h=\left\{\sum_{i=1}^kx_iY_i:x_i\in\mathbb R, m_1x_1+\cdots+m_kx_k=0\right\}.
		\]
		Hence, $\mathfrak h$ integrates to a closed connected Lie subgroup $H$ in $\Diff(S,\rho)_0$.
		Moreover, $H$ is normal in $\Diff(S,\rho)$ as $\mathfrak h$ is stable under this group.
		Using the description in \eqref{E:semidirect} we conclude that $\bar H:=H\rtimes\mathfrak S_k(\ell_\rho)$ is a normal subgroup in $\Diff(S,\rho)$ 
		which intersects each connected component, and the quotient group $\Diff(S,\rho)/\bar H$ is a circle.
		The kernel of $\exp\colon\mathfrak X(S,\rho)/\mathfrak h\to\Diff(S,\rho)/\bar H$ is a rank one lattice spanned by 
		\[
			\bar Y:=\tfrac1{m_1}Y_1+\mathfrak h=\cdots=\tfrac1{m_k}Y_k+\mathfrak h.
		\] 
		Indeed, this kernel consists of all elements of the form $\sum_{i=1}^kn_iY_i+\mathfrak h$ with $n_i\in\mathbb Z$. 
		These can be rewritten as $\bigl(\frac{1}{m_1}\sum_{i=1}^kn_im_i\bigr)Y_1+\mathfrak h$.
		Since $m_1,\dotsc,m_k$ are coprime integers, there exist $n_1,\dotsc,n_k\in\mathbb Z$ such that $\sum_{i=1}^k m_in_i=1$.
		Hence, the kernel coincides with $\mathbb Z\bar Y$.
		
		With \eqref{repro} we get $\theta_\rho(\zeta_{\bar Y})=n$.
		Put $\tilde Y:=\frac1n\bar Y$ and let $\tilde H$ denote the subgroup of $\Diff(S,\rho)$ generated by $\bar H$ and $\exp(\tilde Y)$.
		This is a normal subgroup, the quotient group $\Diff(S,\rho)/\tilde H$ is a circle, 
		the kernel of the exponential map $\exp\colon\mathfrak X(S,\rho)/\mathfrak h\to\Diff(S,\rho)/\tilde H$ is spanned by $\tilde Y$, and $\theta_\rho(\zeta_{\tilde Y})=1$. 
		We conclude that the quotient manifold $\tilde{\mathcal P}_\rho:=\mathcal P_\rho/\tilde H$ is a principal circle bundle over $\mathcal M_\rho$ 
		with structure group $\Diff(S,\rho)/\tilde H\cong S^1$.
		Moreover, the canonical 1-form $\theta_\rho$ descends to a principal connection 1-form on $\tilde{\mathcal P}_\rho$ with curvature $\omega_\rho$.
		
		An analogous proof yields the prequantization of $\mathcal N_\rho$, for every $\Symp_c(M)$ orbit $I$ in $\Gr_S(M)$.
		The prequantizability of the coadjoint orbits now follows from Corollaries~\ref{T:new} and \ref{T:new2}.
	\end{proof}

\subsection{Singular vortex configurations}\label{SS:63}

	The vorticity of an ideal fluid, whether a regular or a singular vorticity, is confined to a coadjoint orbit of the group of volume preserving diffeomorphisms.
	In the plane, the group of compactly supported volume preserving diffeomorphisms coincides with $\Ham_c(\mathbb R^2)$, and its Lie algebra is $C^\infty_c(\mathbb R^2)$
	(by identifying the Hamiltonian vector field $X_h$ with its compactly supported Hamiltonian function).
	Examples of singular vortex configurations in the plane are the point vortex $x\in\mathbb R^2$ with pairing $\langle x,X_h\rangle=h(x)$ 
	and the (weighted) vortex loop $(C,|\nu|)$ with pairing $\langle(C,|\nu|),X_h\rangle=\int_Ch|\nu|$, where $|\nu|$ is a density on the closed curve $C$,
	which can be viewed as a loop of point vortices.

	A vortex dipole in the plane is a vector $u_x\in T_x\mathbb R^2$ with pairing $\langle u_x,X_h\rangle=d_xh(u_x)$.
	Its loop version would be a triple $(C,|\nu|,u)$ consisting of an embedded closed curve $C$ endowed with a density $|\nu|$, 
	and a vector field $u$ in the plane along $C$, with pairing $\langle(C,|\nu|,u),X_h\rangle=\int_Cdh(u)|\nu|$.
	To obtain an identification of triples with elements in the dual Lie algebra, one has to use of a tensor product between $|\nu|$ and $u$, 
	as well as a factorization of $u$ by all vector fields on $C$ whose contraction with $\nu$ is constant.
	A way to solve this is to integrate with the density defined by $u$, namely $|\nu|=|\iota_C^*i_u\mu|$:
	\begin{equation}\label{lopdip}
		\langle(C,[u]),X_h\rangle=\int_Cdh(u)|\iota_C^*i_u\mu|,
	\end{equation}
	where the class $[u]$ is considered modulo $\mathbb R t_C$ with $t_C$ denoting the vector field on $C$ characterized by $\mu(u,t_C)=1$.
	For this to make sense we restrict to $u$ that are nowhere tangent to $C$.

	Next we show that the coadjoint orbits of loops of vortex dipoles coincide with those described in Section~\ref{SS:61}.
	We consider $S=S^1$ and $M=\mathbb R^2$ endowed with canonical area form $\mu$.
	The isodrastic leaves in $\Gr_{S^1}(\mathbb R^2)$ are the nonlinear Grassmannians $\Gr_{S^1}^a(\mathbb R^2)$ 
	consisting of all closed curves that enclose a fixed area $a>0$ in the plane (see Example~\ref{E:pc}). 
	The nonlinear Grassmannian $\mathcal M_\rho$ of augmented closed curves in \eqref{tw}, obtained by symplectic reduction at a nowhere zero 1-form density 
	$\rho\in\Omega^1(S^1;|\Lambda|_{S^1})$ of length $\ell$, becomes (see Example~\ref{vort})
	\begin{equation}\label{marh}
		\mathcal M^a_\ell=\left\{(C,[\gamma]):
		\begin{array}{c}
			C\in\Gr_{S^1}^a(\mathbb R^2),\ \gamma\in\Gamma(|\Lambda|_C\otimes T^*\mathbb R^2|_C)\\
			\iota_C^*\gamma\text{ nowhere zero 1-form density of length }\ell
		\end{array}\right\}.
	\end{equation}
	Here $[\gamma]$ considered modulo the annihilator $\Ann_\reg(T_C\Gr_{S^1}^a(\mathbb R^2))=\mathbb R\varepsilon_C$, where
	\begin{equation}\label{tace}
		\varepsilon_C=|\nu_C|\otimes i_{t_C}\mu\in \Gamma(|\Lambda|_C\otimes\Ann(TC))
	\end{equation}
	for a volume form $\nu_C$ and its dual vector field $t_C$, i.e., $\nu_C(t_C)=1$.

	To extract $u\in\Gamma(T\mathbb R^2|_C)$ from $\gamma$, we use the 1-form density $\iota_C^*\gamma$ on $C$. 
	It is of type $\rho$, hence it encodes a metric of length $\ell$ on $C$ together with an orientation. 
	Hence, denoting by $\nu_C$ the volume form induced by the metric and compatible with the orientation, we have $\iota_C^*\gamma=|\nu_C|\otimes\nu_C$.
	There exists a unique vector field $u$ along $C$ such that
	\[
		\gamma=|\nu_C|\otimes i_u\mu\in\Gamma(|\Lambda|_C\otimes T^*\mathbb R^2|_C).
	\]
	Moreover, $\iota_C^*(i_u\mu)=\nu_C$ and $u$ is nowhere tangent to $C$.
	Thus, we get an identification of $\gamma$ with $u$,
	as well as an identification of $[\gamma]=\gamma+\mathbb R\varepsilon_C$ with $[u]=u+\mathbb Rt_C$.

	Since there is no point on the curve where the vector field $u\in\Gamma(T\mathbb R^2|_C)$ is tangent to the curve,
	either $u$ points inward at every point of $C$, or it points outward.
	This corresponds to the two connected components of $\mathcal M^a_\ell$, namely the two coadjoint orbits of $\Ham_c(\mathbb R^2)$.
	Also the orientations on $C$ induced by $\nu_C$ are opposite in these two cases.

	\begin{corollary}
		The symplectic reduction for the right action of $\Diff(S^1)$ on the cotangent bundle $T^*_{\reg,\circ}\Emb^a(S^1,\mathbb R^2)$
		at a nowhere zero 1-form density of length $\ell$ yields the nonlinear Grassmannian of augmented curves 
		\[
			\mathcal M^a_\ell=\left\{(C,[u]):
			\begin{array}{c}
				C\in\Gr_{S^1}^a(\mathbb R^2),\ u\in\Gamma(T\mathbb R^2|_C)\\
				\nu_C=\iota_C^*i_u\mu\text{ volume form of length }\ell
			\end{array}\right\},
		\]
		where the class $[u]$ is considered modulo $\mathbb Rt_C$.
		Endowed with its reduced symplectic form, each of the two connected components of $\mathcal M^a_\ell$ is symplectomorphic to a coadjoint orbit of $\Ham_c(\mathbb R^2)$ via
		\[
			J\colon\mathcal M^a_\ell\to C^\infty_c(M)^*,
			\quad\langle J(C,[u]),X_h\rangle=\int_Cdh(u)|\iota_C^*i_u\mu|.
		\]
		If, moreover, $\ell^2\in\mathbb N$, then both coadjoint orbits are prequantizable.
	\end{corollary}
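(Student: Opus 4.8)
The plan is to recognize this corollary as the translation of Corollary~\ref{T:new} into the geometry of augmented plane curves, in the special case $S=S^1$, $M=\mathbb R^2$. First I would record the dictionary from Example~\ref{E:pc}: for $\mathbb R^2$ with its area form $\mu$ one has $\Diff_{c,\ex}(\mathbb R^2,\mu)=\Ham_c(\mathbb R^2)$ with Lie algebra $C^\infty_c(\mathbb R^2)$, the isodrastic leaves in $\Gr_{S^1}(\mathbb R^2)$ are exactly the $\Gr^a_{S^1}(\mathbb R^2)$ of loops enclosing area $a>0$, and the preimage of such a leaf in $\Emb(S^1,\mathbb R^2)$ is $\Emb^a(S^1,\mathbb R^2)$. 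Applying Corollary~\ref{T:new} with $I=\Gr^a_{S^1}(\mathbb R^2)$ and a nowhere zero $\rho$ of length $\ell$ then already gives that each connected component of the reduced space $\mathcal M_\rho=\mathcal M^a_\ell$ of \eqref{tw}, \eqref{marh} is symplectomorphic to a coadjoint orbit of $\Ham_c(\mathbb R^2)$ via $\langle J(C,[\gamma]),X_h\rangle=\int_C\gamma(X_h|_C)$.

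The bulk of the work is the identification $[\gamma]\leftrightarrow[u]$, which I would carry out exactly as in the paragraph preceding the statement. Given $(C,[\gamma])\in\mathcal M^a_\ell$, the restriction $\iota_C^*\gamma$ is a nowhere zero $1$-form density of length $\ell$ on the circle $C$, so by Section~\ref{SS:EPDiff.nonzero} it equals $|\nu_C|\otimes\nu_C$ for a unique volume form $\nu_C$ on $C$ with $\int_C|\nu_C|=\ell$; contracting $\mu$ then yields a unique $u\in\Gamma(T\mathbb R^2|_C)$ with $\gamma=|\nu_C|\otimes i_u\mu$, and for this $u$ one has $\iota_C^*(i_u\mu)=\nu_C$, so $u$ is nowhere tangent to $C$. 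Under this correspondence the annihilator line $\Ann_\reg(T_C\Gr^a_{S^1}(\mathbb R^2))=\mathbb R\varepsilon_C$ of \eqref{tace} goes to $\mathbb R t_C$, so $(C,[\gamma])\mapsto(C,[u])$ is a well-defined bijection onto the set displayed in the statement; to see it is a diffeomorphism one observes that fiberwise over $\Gr^a_{S^1}(\mathbb R^2)$ it is an affine-bundle isomorphism built from these tautological identities, hence it carries the reduced symplectic form and the moment map of Corollary~\ref{T:new} over verbatim. Substituting $\gamma=|\nu_C|\otimes i_u\mu$, a direct computation rewrites $\int_C\gamma(X_h|_C)$ as $\int_C dh(u)\,|\iota_C^*i_u\mu|$, the formula for $J$ in the statement.

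For the two connected components I would argue that $u$, having nowhere zero normal part along the connected curve $C$, points to one and the same side of $C$ everywhere, and that the inward versus outward alternative is the only obstruction to connectedness: with the side fixed, the curve $C$ ranges over the connected space $\Gr^a_{S^1}(\mathbb R^2)$, and over each $C$ the admissible data (a class $[u]$ with prescribed side, whose induced volume form $\iota_C^*i_u\mu$ has mass $\ell$) form a connected fiber — a convex constraint on the nowhere zero normal part together with an affine space for the tangential part modulo $\mathbb R t_C$. Hence $\mathcal M^a_\ell$ has exactly two connected components, and by the previous step each is a coadjoint orbit of $\Ham_c(\mathbb R^2)$. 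Finally, the prequantizability under $\ell^2\in\mathbb N$ is the $k=1$ instance of Theorem~\ref{P:preq}. The step most likely to need care is not analytic — all of that substance already lies in Corollary~\ref{T:new} and Theorem~\ref{P:preq} — but the bookkeeping of the identification $[\gamma]\mapsto[u]$: that it is smooth with smooth inverse, equivariant, and symplectic, and that it matches the annihilator $\mathbb R\varepsilon_C$ with $\mathbb R t_C$. This is the routine verification sketched above.
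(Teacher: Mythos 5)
Your proposal is correct and follows essentially the same route as the paper: the paper's own proof simply observes that the first part is a special case of Corollary~\ref{T:new} and the prequantization claim is the $k=1$ case of Theorem~\ref{P:preq}, with the identification $[\gamma]\leftrightarrow[u]$, the matching of $\mathbb R\varepsilon_C$ with $\mathbb R t_C$, and the inward/outward description of the two connected components carried out exactly as in the discussion you reproduce.
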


	The first part of the corollary is a special case of Corollary~\ref{T:new}, while the second part follows directly from Theorem~\ref{P:preq}.

\section*{Acknowledgments}

	The first author thanks the West University of Ti\-mi\-\c soa\-ra for the warm hospitality.
	He gratefully acknowledges the support of the Austrian Science Fund (FWF), Grant DOI: 10.55776/P31663.
	The second author thanks the University of Vienna for the warm hospitality.
	She gratefully acknowledges the support of the grant of the Romanian Ministry of Education and Research, 
	CNCS-UEFISCDI, project number PN-III-P4-ID-PCE-2020-2888, within PNCDI III.

\end{document}